\newcommand{\seqnum}[1]{\href{https://oeis.org/#1}{\rm \underline{#1}}}
\def\suchthat{\, : \, }
\DeclareMathOperator{\neqtriple}{NeqTriple}
\DeclareMathOperator{\shevcond}{ShevCond}
\DeclareMathOperator{\poweroftwo}{Power2}
\DeclareMathOperator{\triple}{Triple}
\DeclareMathOperator{\mwtriple}{TripleMW}
\DeclareMathOperator{\vtmtriple}{TripleVTM}
\DeclareMathOperator{\pdtriple}{TriplePD}
\DeclareMathOperator{\twoconsec}{twoconsec}
\DeclareMathOperator{\sep}{sep}
\def\land{\, \wedge\, }
\def\lor{\, \vee\, }
\def\modd#1 #2{#1\ \mbox{\rm (mod}\ #2\mbox{\rm )}}
\def\lcm{{\rm lcm}}
\newcommand{\pseudoperiod}{\texttt{PSEUDOPERIOD}}
\newcommand{\hittingset}{\texttt{HITTING SET}}
\author[Joseph Meleshko et al.]{Joseph Meleshko\affiliationmark{1}
  \and Pascal Ochem\affiliationmark{2}
  \and Jeffrey Shallit\affiliationmark{1}\thanks{The research of JS is supported by NSERC Grant 2018-04118.}
  \and Sonja Linghui Shan\affiliationmark{1}}
\title[Pseudoperiodic Words]{Pseudoperiodic Words and a Question of Shevelev}
\affiliation{
  University of Waterloo, Canada \\
  LIRMM, CNRS, Universit\'e de Montpellier, France}
\keywords{pseudoperiodic word, automata, Thue-Morse sequence, Rudin-Shapiro
sequence, Tribonacci sequence, paperfolding sequence, critical exponent}
\begin{document}

\publicationdata{vol. 25:2 }{2023}{6}{10.46298/dmtcs.9919}{2022-08-15; 2022-08-15; 2023-01-02}{2023-05-26}

\maketitle
\begin{abstract}
We generalize the familiar notion of periodicity in sequences to a new kind of pseudoperiodicity, and we prove some basic results about it.
We revisit the results of a 2012 paper of Shevelev and reprove his results in a simpler and more unified manner, and provide a complete answer to one of his previously unresolved questions.  We consider finding words 
with specific pseudoperiod and having the smallest possible critical exponent.
Finally, we consider the problem of determining whether a finite
word is pseudoperiodic of a given size, and show that it is NP-complete.   
\end{abstract}

\theoremstyle{plain}
\newtheorem{theorem}{Theorem}
\newtheorem{corollary}[theorem]{Corollary}
\newtheorem{lemma}[theorem]{Lemma}
\newtheorem{proposition}[theorem]{Proposition}

\theoremstyle{definition}
\newtheorem{definition}[theorem]{Definition}
\newtheorem{example}[theorem]{Example}
\newtheorem{conjecture}[theorem]{Conjecture}
\newtheorem{openproblem}[theorem]{Open Problem}
\newtheorem{problem}[theorem]{Problem}

\theoremstyle{remark}
\newtheorem{remark}[theorem]{Remark}

\title{Pseudoperiodic Words and a Question of Shevelev}

\centerline{\emph{In honor of Vladimir Shevelev (1945--2018)}}

\section{Introduction}
Periodicity is one of the simplest and most studied aspects of
words (sequences).  Let $w = a_0 a_1 a_2 \cdots a_{t-1}$ 
be a finite word.  We say that $w$ is (purely) \emph{periodic} with
period $p$ ($1 \leq p \leq t$) if
$a_i = a_{i+p}$ for $0 \leq i < t-p$.   For example, the
French word \texttt{entente} is periodic with
periods $3, 6, $ and $7$.  The definition is
extended to infinite words as follows:  ${\bf w} = a_0 a_1 \cdots$ is
periodic with period $p$ if $a_i = a_{i+p}$ for all $i \geq 0$.   
Unless otherwise stated,
all words in this paper are indexed starting with index $0$.
All infinite words are defined over a finite alphabet.

In this paper we begin the study of a simple and obvious---yet apparently little-studied---generalization of
periodicity, which we call $k$-pseudoperiodicity.  

\begin{definition}
We say that
a finite word $w = a_0 a_1 \cdots a_{t-1}$ is \emph{$k$-pseudoperiodic} if there exist
$k \geq 1$ integers $0 < p_1 < p_2 < \cdots < p_k$ such that
$a_i \in \{ a_{i+p_1}, a_{i+p_2}, \ldots, a_{i+p_k} \}$
for all $i$ with $0 \leq i < t-p_k$.
For infinite words the 
membership must hold for all $i$.  If this is the case, we call
$( p_1, p_2, \ldots, p_k )$ a \emph{pseudoperiod} for
$w$.
\end{definition}
In this paper, when we write a pseudoperiod $( p_1, p_2, \ldots, p_k )$  we always assume $0<p_1 < \cdots < p_k$.   Note that $1$-pseudoperiodicity is the ordinary notion of (pure) periodicity.   
If an infinite word $\bf w$ is $k$-pseudoperiodic for
some $k < \infty$, we call it \emph{pseudoperiodic}.

We note that our definition of pseudoperiodicity is not the same as that
studied by \cite{BlondinMasse&Gaboury&Halle:2012}.
Nor is it the same as the notion of
quasiperiodicity, as introduced by
\cite{Marcus:2004}, and now
widely studied in many papers.  Nor is it the same as ``almost periodicity'', which is more commonly called uniform recurrence (i.e., every block that occurs, occurs with bounded gaps between successive occurrences).

\subsection{Notation}
\label{notation}

We use the familiar regular expression notation for regular languages.
For infinite words, we let
$x^\omega$ for a nonempty finite word $x$ denote the infinite word $xxx\cdots$.

The \emph{exponent} of a finite word $x$, denoted $\exp(x)$ is
$|x|/p$, where $p$ is the smallest period of $x$.
For example, if $x = \texttt{entente}$, then $\exp(x) = 7/3$.
If $q$ divides $|x|$, then by $x^{p/q}$ we mean
the word of length $p|x|/q$ that is a prefix of $x^\omega$.
For example, ${\texttt{(alf)}}^{7/3} = \texttt{alfalfa}$.

If all the nonempty factors $f$ of a (finite or infinite) word $x$ satisfy
$\exp(f) < e$, we say that $x$ is \emph{$e$-free}.  If they satisfy $\exp(f) \leq e$, we say that
$x$ is \emph{$e^+$-free}.

The \emph{critical exponent} of an infinite word $\bf w$ is the supremum
of $\exp(x)$ over all finite nonempty factors $x$ of $\bf w$.
Here the supremum is taken over the extended real numbers, where for each real number $\alpha$
there is a corresponding number $\alpha^+$ satisfying $\alpha < \alpha^+ < \beta$
for all $\beta > \alpha$.
Thus if $x$ is a real number, the inequality $x \geq \alpha^+$
has the same meaning as $x > \alpha$.

If $S$ is a set of (finite or infinite) words, then its \emph{repetition threshold}
is the infimum of the critical exponents of all its words.

A \emph{run} in a word is a maximum block of consecutive identical letters.
The first run is called the \emph{initial run}.

An \emph{occurrence} of a finite nonempty word $x$ in another word $w$
(finite or infinite) is an index $i$ such that $w[i+j]=x[j]$ for $0 \leq j < |x|$.
The  \emph{distance} between two occurrences $i$ and $i'$ is their difference $|i'-i|$.

The \emph{Thue-Morse word} ${\bf t} = 01101001 \cdots$ is the infinite
fixed point, starting with $0$, of the morphism $\mu(0) = 01$ and $\mu(1) = 10$.

\subsection{Goals of this paper}

There are five basic questions that interest us in this paper. 
\begin{enumerate}
    \item Given an infinite sequence
    $\bf s$, is it pseudoperiodic?
    \item If $\bf s$ is $k$-pseudoperiodic for some $k$, what
    is the smallest such $k$?
    \item If $\bf s$ is $k$-pseudoperiodic, what  are all the possible 
    pseudoperiods of size $k$?
    \item What is the smallest possible critical exponent of
    an infinite pseudoperiodic
    word with specified pseudoperiod?
    \item How quickly can we tell if a given
    finite sequence has a pseudoperiod of bounded size?
\end{enumerate}

In particular, we are interested in answering these questions for the class of sequences called
automatic.
A novel feature of our work is that much of it is done
using a theorem-prover for automatic sequences,
called \texttt{Walnut}, originally developed by Hamoon Mousavi.
For more information about \texttt{Walnut}, see \cite{Mousavi:2016,Shallit:2022}.

Here is a brief summary of what we do in our paper.  In Section~\ref{sec2} we prove basic results about pseudoperiodicity, and show that questions 1, 2, and 3 above are decidable for the class of automatic sequences.  In Section~\ref{sec3}, we recall Shevelev's problems about pseudoperiods of the Thue-Morse word, solve them using our method, and also solve his open question from 2012.  In Section~\ref{sec4}, we obtain analogous pseudoperiodicity results for some other famous sequences.   In Section~\ref{sec5} we turn to question 4, obtaining the best possible critical exponent for binary words having certain pseudoperiods.  In Section~\ref{sec6} we treat the case of larger alphabets and obtain some results.  In Section~\ref{sec7} we prove that checking the existence of a pseudoperiod of size $k$ is, in general, a difficult computational problem, thus answering question 5.  Along the way, we state two conjectures (Conjectures \ref{conj1} and \ref{conj2}) and one open problem (Open Problem \ref{open1}).  Finally, in Section~\ref{sec8}, we make some brief biographical remarks about Vladmir Shevelev.  

\section{Basic results}
\label{sec2}

\begin{proposition}
An infinite word $\bf s$ is pseudoperiodic if and only if there exists a bound $B< \infty$ such that
two consecutive occurrences of the same letter in $\bf s$
are always separated by distance at most $B$.
\label{proptwo}
\end{proposition}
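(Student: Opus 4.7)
The plan is to prove both directions by a direct translation between the defining inclusion in a pseudoperiod and a bound on gaps between consecutive occurrences. Neither implication involves any nontrivial combinatorics; the whole proof is essentially unpacking definitions.

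For the forward direction, suppose $\bf s$ is $k$-pseudoperiodic with pseudoperiod $(p_1, p_2, \ldots, p_k)$. I would set $B := p_k$. For any position $i$, the defining inclusion $a_i \in \{a_{i+p_1}, \ldots, a_{i+p_k}\}$ guarantees some $j$ with $a_{i+p_j} = a_i$, so the letter $a_i$ reappears at a position at most $p_k = B$ away. Hence the next occurrence of any letter following position $i$ lies within distance $B$, which bounds the separation of any two consecutive occurrences of the same letter by $B$.

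For the reverse direction, suppose such a bound $B$ exists. For every position $i$, the letter $a_i$ must have a next occurrence within $B$ positions, so there is some $d \in \{1, 2, \ldots, B\}$ with $a_{i+d} = a_i$. This gives $a_i \in \{a_{i+1}, a_{i+2}, \ldots, a_{i+B}\}$ for all $i$, exhibiting $(1, 2, \ldots, B)$ as a pseudoperiod; thus $\bf s$ is $B$-pseudoperiodic.

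The only point that needs care is that ``consecutive occurrences'' implicitly requires that every letter actually appearing in $\bf s$ appears infinitely often, for otherwise the last occurrence of a rarely-appearing letter would have no successor and the reverse direction would fail. Under the natural reading that every occurrence has a successor within distance $B$, this is automatic, and no real obstacle remains.
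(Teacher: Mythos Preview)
Your proof is correct and follows exactly the same approach as the paper: take $B = p_k$ for the forward direction, and exhibit $(1,2,\ldots,B)$ as a pseudoperiod for the reverse direction. Your closing remark about letters that might occur only finitely often is a genuine subtlety that the paper's proof passes over silently; you are right that the reverse implication requires the reading ``every occurrence has a successor within distance $B$,'' since otherwise a word like $0\,1^\omega$ would satisfy the gap condition vacuously for the letter $0$ yet fail to be pseudoperiodic.
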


\begin{proof}
Suppose $\bf s$ has pseudoperiod $(p_1, p_2, \ldots, p_k)$, with
$p_1 < \cdots < p_k$.  Then clearly we may take $B = p_k$.

On the other hand, if two consecutive occurrence of every letter 
are always separated by distance $\leq B$, then
we may take $(1,2,\ldots, B)$ as a pseudoperiod for $\bf s$.
\end{proof}

For binary words we can say this in another way.
\begin{proposition}
Let $x$ be an infinite binary word.
\begin{enumerate}
    \item[(a)] If $M$ is the maximum element of a pseudoperiod,
    then the longest non-initial run in $\bf x$ is of length $\leq M-1$;
    \item[(b)] if the longest non-initial run length in $\bf x$ is $B$,
    then $(1,2,3, \ldots, B+1)$ is a pseudoperiod.
\end{enumerate}
In particular, an infinite binary word is pseudoperiodic if and only if it consists of
a single letter repeated, or its sequence of run lengths is bounded.
\label{prop1}
\end{proposition}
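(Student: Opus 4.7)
My plan is to handle parts (a) and (b) separately by short direct arguments and then combine them to obtain the final characterization.

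For part (a), I would argue by contradiction. Suppose ${\bf x}$ has a non-initial run of length at least $M$; then there exist an index $i \ge 1$ and a letter $a$ with $a_i = a_{i+1} = \cdots = a_{i+M-1} = a$ and $a_{i-1}$ the opposite letter. Applying the pseudoperiod condition at index $i-1$ requires $a_{i-1} \in \{a_{(i-1)+p_1}, \ldots, a_{(i-1)+p_k}\}$. Since every $p_j$ satisfies $1 \le p_j \le M$, each index $(i-1)+p_j$ falls inside $\{i, \ldots, i+M-1\}$, where the letter is $a$. This forces $a_{i-1}=a$, contradicting the choice of $a_{i-1}$.

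For part (b), I would directly verify that $(1,2,\ldots,B+1)$ satisfies the pseudoperiod condition. Fix any index $i$ and assume, toward a contradiction, that $a_i \notin \{a_{i+1}, \ldots, a_{i+B+1}\}$. Because ${\bf x}$ is binary, all $B+1$ symbols $a_{i+1}, \ldots, a_{i+B+1}$ must equal the letter opposite to $a_i$. These positions therefore lie inside a non-initial run (beginning at index $i+1 \ge 1$) of length at least $B+1$, contradicting the assumption that the longest non-initial run has length $B$.

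For the ``in particular'' statement, part (a) shows that any pseudoperiodic word has bounded non-initial run lengths; if both letters occur the initial run is automatically finite, so the full sequence of run lengths is bounded, while if only one letter occurs ${\bf x}$ is constant. Conversely, a constant word is trivially pseudoperiodic, and if all run lengths are bounded by some $B$ then (b) supplies the pseudoperiod $(1,2,\ldots,B+1)$. I expect no real technical obstacle: each part follows from a single appeal to the definition, and binary-ness makes the ``opposite letter'' argument immediate. The only place requiring a bit of care is the characterization itself, where the constant (single-letter) word must be handled as a separate case because its initial run is infinite.
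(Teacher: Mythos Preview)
Your proposal is correct and follows essentially the same approach as the paper: both parts are handled by a direct appeal to the definition, using that in a binary word the letter immediately preceding a run is the opposite letter, and the final characterization is obtained by combining (a) and (b) with the constant-word case treated separately. The only difference is cosmetic---the paper argues (a) by looking at the last letter of a run and the next occurrence of the same letter, whereas you phrase it as a contradiction at the letter just before a long run---but the content is identical.
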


\begin{proof}
Suppose $\bf x$ is pseudoperiodic with pseudoperiod
$(p_1, p_2, \ldots, p_k)$ and let $M = \max_{1 \leq i \leq k} p_i$.  Let $a \in \{ 0, 1 \}$ and let
${\bf x}[p..q]$ be a run of $a$'s and
${\bf x}[q+1..r]$ be the following run (of $\overline{a}$'s).
Then ${\bf x}[r+1] = a$.  Now consider ${\bf x}[q] = a$.
Since $\bf x$ is pseudoperiodic, we know that
$(r+1)-q \leq M$.  Hence all non-initial runs are of
length at most $M-1$.  

On the other hand, if index $p$
does not correspond to the last
letter of a run, then ${\bf x}[p] = {\bf x}[p+1]$.  If it does so correspond, since the word is binary
and
all non-initial run lengths are bounded, say by $B$,
we know that ${\bf x}[p+i] = {\bf x}[p]$ for some
$i \leq B+1$.   So $(1,2,\ldots, B+1)$ is a pseudoperiod.  
\end{proof}

\begin{proposition}
The only infinite words with pseudoperiod $(1,2)$ are those of the form
$a^\omega$ or $a^* (ab)^\omega$ and $b^*(ba)^\omega$ for distinct letters $a,b$.  
The only finite words with pseudoperiod $(1,2)$
are those of the form $a^*(ab)^*(a+\epsilon)$ with $a \not= b$.
\label{pp12}
\end{proposition}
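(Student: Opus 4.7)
The plan is to exploit the following immediate consequence of the pseudoperiod $(1,2)$ condition: if $a_j \neq a_{j+1}$ at some index $j$ where the condition is asserted, then $a_j \in \{a_{j+1}, a_{j+2}\}$ forces $a_j = a_{j+2}$, hence $a_{j+1} \neq a_{j+2}$. Thus ``change'' propagates: once an adjacent pair of distinct letters appears, every later adjacent pair is also distinct, and the word is forced to alternate between exactly two letters from that point onward.

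For the infinite case, if $\mathbf{w}$ is constant it equals $a^\omega$. Otherwise, let $i$ be the smallest index with $a_i \neq a_{i+1}$, and set $a := a_i$, $b := a_{i+1}$. Minimality of $i$ gives $a_0 = a_1 = \cdots = a_i = a$, and the propagation principle applied inductively yields $a_{i+2k} = a$ and $a_{i+2k+1} = b$ for every $k \geq 0$. Therefore $\mathbf{w} = a^{i+1}(ba)^\omega = a^i(ab)^\omega$, which lies in $a^*(ab)^\omega$. A word starting with $b$ instead of $a$ gives $b^*(ba)^\omega$ by symmetry, completing that half of the statement.

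For the finite case, the condition is vacuous when $t \leq 2$, so those short words can be checked directly (all such words trivially fit the stated form after possibly swapping the roles of $a$ and $b$). For $t \geq 3$ the argument above adapts: dispose of the constant case (giving $a^t$), then locate the first change index $i$ and propagate as before. The pseudoperiod condition is asserted precisely for $0 \leq j \leq t-3$, so propagation determines positions $i+2, i+3, \ldots, t-1$ with no unconstrained tail. Using the identities $a(ba)^k = (ab)^k a$ and $a(ba)^k b = (ab)^{k+1}$, the resulting word $a^{i+1}(ba)^k$ or $a^{i+1}(ba)^k b$ can be rewritten as $a^i(ab)^k a$ or $a^i(ab)^{k+1}$, each of which lies in $a^*(ab)^*(a+\epsilon)$, according to the parity of $t - i - 1$.

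There is essentially no obstacle: the heart of the argument is the two-line ``change propagates'' observation. The only care required is bookkeeping at the finite boundary, to confirm that the constraint at $j = t-3$, namely $a_{t-3} \in \{a_{t-2}, a_{t-1}\}$, together with the propagation from the left, pins down the alternating pattern all the way to index $t-1$; once that is checked, both directions of the ``only if'' claim follow, and the ``if'' direction (that words of the listed forms actually have pseudoperiod $(1,2)$) is an immediate verification.
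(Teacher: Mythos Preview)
Your argument is correct and rests on the same idea as the paper's: the paper simply says ``Follows immediately from Proposition~\ref{prop1},'' whose part (a) with $M=2$ asserts that every non-initial run has length $1$, which is precisely your ``change propagates'' observation restated in terms of runs. The only difference is presentational: you inline the argument directly and handle the finite case and the arbitrary-alphabet setting explicitly, whereas the paper invokes a proposition stated only for infinite binary words; your version is thus a bit more self-contained, but the underlying mechanism is identical.
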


\begin{proof}
Follows immediately from Proposition~\ref{prop1}.
\end{proof}

\begin{theorem}
If an infinite word has pseudoperiod $S$
then it has $\leq \max S$ distinct letters.
If it has exactly $\max S$ distinct letters, then it must have a suffix of
the form $x^\omega$,
where $x$ is a word of length $\max S$ containing each letter exactly once.
\end{theorem}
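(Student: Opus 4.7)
Let $M = \max S$, write $S = (p_1, \ldots, p_k)$ with $p_k = M$, and let $\Sigma$ denote the set of letters actually occurring in the infinite word $\bf w$. The central observation, which I would state first, is that the defining condition of a pseudoperiod says ${\bf w}[i] \in \{{\bf w}[i+p_1], \ldots, {\bf w}[i+p_k]\}$, and all of the indices $i+p_j$ lie in the window $[i+1, i+M]$. In particular, every letter that occurs at position $i$ also occurs at some position in $[i+1, i+M]$; hence every letter of $\Sigma$ appears infinitely often, and any two consecutive occurrences of the same letter are at distance at most $M$.

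Next I would pick $N$ large enough that every letter of $\Sigma$ has occurred at least once in ${\bf w}[0..N-1]$ (possible because $\Sigma$ is finite). For any $i \geq N$ and any $a \in \Sigma$, chase the ``at most $M$'' gap bound from the first occurrence of $a$ forward: there must be an occurrence of $a$ in the length-$M$ window ${\bf w}[i..i+M-1]$. Since this holds for every letter in $\Sigma$ and the window has only $M$ slots, we conclude $|\Sigma| \leq M$, which is the first claim.

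For the second claim, assume $|\Sigma| = M$. Then for each $i \geq N$ the window ${\bf w}[i..i+M-1]$ contains each of the $M$ letters of $\Sigma$ \emph{exactly once}. Comparing the windows at positions $i$ and $i+1$, which differ only in that ${\bf w}[i]$ is removed and ${\bf w}[i+M]$ is appended, forces ${\bf w}[i] = {\bf w}[i+M]$ for all $i \geq N$. Thus the suffix starting at $N$ is purely periodic with period $M$, i.e.\ equals $x^\omega$ with $x = {\bf w}[N..N+M-1]$, and by construction $x$ contains each letter of $\Sigma$ exactly once.

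I do not anticipate a real obstacle; the only subtlety is being careful with the alphabet $\Sigma$ (taking it to be the letters that actually appear, not a pre-specified alphabet) and with the threshold $N$ past which every letter has already been seen once. Everything else is an immediate consequence of the bounded-gap reformulation that mirrors Proposition~\ref{proptwo}.
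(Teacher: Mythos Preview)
Your argument is correct, and it takes a somewhat different route from the paper's.

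For the first claim, the paper argues via letter \emph{frequencies}: since consecutive occurrences of any fixed letter are at distance at most $k=\max S$, each letter has density at least $1/k$, and densities sum to at most $1$. Your window/pigeonhole argument---every length-$M$ window past position $N$ contains all letters of $\Sigma$---is more direct and sidesteps the need to speak of frequencies in an infinite word at all (the paper's density statement is informal and would require a $\liminf$/$\limsup$ formulation to be made precise; your version avoids this entirely).

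For the second claim, the paper locates explicit positions $p_0<\cdots<p_{k-1}$ (the last pre-$p_{k-1}$ occurrences of each letter) and argues by contradiction that the gaps between them are empty, then inductively reads off the periodic suffix starting at $p_0$. Your sliding-window argument is slicker: once every length-$M$ window is a permutation of $\Sigma$, comparing adjacent windows immediately forces ${\bf w}[i]={\bf w}[i+M]$. Both proofs ultimately rest on the same bounded-gap observation, but yours packages it more cleanly and yields the periodicity in one line rather than via a case analysis on inter-letter gaps. The paper's approach, on the other hand, pins down an explicit starting index for the periodic suffix (namely $p_0$), which is slightly more information than your ``some $N$ large enough.''
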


\begin{proof}
Suppose $\bf w$ has pseudoperiod $S$, with $k = \max S$.
Since each occurrence of a letter is followed by another occurrence of the same letter
at distance $\leq k$, it follows that each letter of $\bf w$ must occur with
frequency $\geq 1/k$ in $w$. But the total of
all frequencies must sum to $1$, so there cannot be more than $k$ distinct letters.

Now suppose $\bf w$ has exactly $k$ distinct letters, say $0, 1, \ldots, k-1$.
Without loss of generality, assume that the last letter to occur for the first time is $k-1$ and
$p_{k-1}$ is this first occurrence.
Furthermore, let $p_0, \ldots, p_{k-2}$ be the positions of the last
occurrence of the letters $0, 1, \ldots, k-2$ that precede
$p_{k-1}$, and again, without loss of generality assume
$p_0 < \cdots < p_{k-2} < p_{k-1}$.  
Thus ${\bf w}[p_0..p_{k-1}] = 0 \, w_1\, 1\, w_2\, 2 \cdots (k-2) \, w_{k-1} \, (k-1)$ for some words
$w_1, w_2, \ldots, w_{k-1}$, where $w_i$ contains
no occurrences of letters $<i$.  
However, if any of these $w_i$ were nonempty then $\bf w$ could not be
pseudoperiodic (because the $0$ at position $p_0$ would not be followed by
another $0$ at distance $\leq k$).
So all the $w_i$ are  empty. Furthermore, pseudoperiodicity also shows that
${\bf w}[p_{k-1}+1] = 0$, and inductively, that ${\bf w}[p_{k-1} + i] =
(i-1) \bmod k$ for all $i \geq 0$.
\end{proof}

We now turn to results about automatic sequences.
This is a large and interesting class of sequences where the $n$th term
is computed by a finite automaton taking as input the representation of $n$
in some base (or generalizations, such as Fibonacci base).
For more information about automatic sequences, see \cite{Allouche&Shallit:2003}.

\begin{corollary}
   Problems 1, 2, and 3 above are decidable, if
   $\bf s$ is an automatic sequence.
\end{corollary}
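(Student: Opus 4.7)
The plan is to invoke the Büchi--Bruyère decidability theorem, which states that every first-order sentence over the structure $(\mathbb{N}, +, \mathbf{s})$ is decidable whenever $\mathbf{s}$ is an automatic sequence (this is the theoretical foundation of the Walnut prover mentioned in the introduction). Each of Problems 1, 2, and 3 can be translated into such a sentence or formula.

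For Problem 1, I would apply Proposition~\ref{proptwo} and observe that $\mathbf{s}$ is pseudoperiodic if and only if
\[
\exists B \, \forall i \, \exists p \, \bigl(0 < p \leq B \ \land\ \mathbf{s}[i+p] = \mathbf{s}[i]\bigr),
\]
which is a first-order sentence over $(\mathbb{N}, +, \mathbf{s})$, hence decidable. For Problem 2, I would evaluate, for $k = 1, 2, \ldots$ in turn, the sentence
\[
\exists p_1 \exists p_2 \cdots \exists p_k \, \bigl(0 < p_1 < \cdots < p_k \ \land\ \forall i \, \bigvee_{j=1}^{k} \mathbf{s}[i+p_j] = \mathbf{s}[i]\bigr),
\]
which is first-order and decidable for each fixed $k$. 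Termination is guaranteed once Problem 1 has returned ``yes'': one can then search for the least $B$ such that $(1, 2, \ldots, B)$ is a pseudoperiod (again a decidable check), and conclude that the minimal pseudoperiod size is at most $B$. Finally, for Problem 3, the formula
\[
\varphi_k(p_1, \ldots, p_k): \quad 0 < p_1 < \cdots < p_k \ \land\ \forall i \, \bigvee_{j=1}^{k} \mathbf{s}[i+p_j] = \mathbf{s}[i]
\]
has $k$ free variables, and Bruyère's theorem produces a synchronized finite automaton (over the numeration system native to $\mathbf{s}$) that recognizes exactly the $k$-tuples satisfying $\varphi_k$. This automaton is a complete computable description of the set of all size-$k$ pseudoperiods of $\mathbf{s}$.

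The main obstacle is not conceptual but computational: once each question is translated into the appropriate formula, decidability is immediate via the Büchi--Bruyère framework and its implementation in Walnut. The difficulty lies in managing the quantifier alternation, which can produce intermediate automata of very large state complexity; making these translations tractable is precisely the practical concern that resurfaces in the applications to specific automatic sequences in the following sections.
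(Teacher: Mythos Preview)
Your proposal is correct and follows essentially the same approach as the paper: both invoke the B\"uchi--Bruy\`ere decidability result, use Proposition~\ref{proptwo} to express pseudoperiodicity as a first-order sentence for Problem~1, test the $k$-pseudoperiodicity formula for increasing $k$ (bounded by the witness $B$) for Problem~2, and produce an automaton for the formula with free variables $p_1,\ldots,p_k$ for Problem~3. The only cosmetic difference is that the paper writes its bounded-gap formula via an auxiliary ``two consecutive occurrences'' predicate, whereas you express the equivalent condition $\exists B\,\forall i\,\exists p\,(0<p\le B\land \mathbf{s}[i+p]=\mathbf{s}[i])$ directly.
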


\begin{proof}
By the results of \cite{Bruyere&Hansel&Michaux&Villemaire:1994},
it suffices to create first-order logical formulas asserting each property.
The domain of the variables in all logical statements is assumed to be
$\mathbb{N} = \{0,1,2,\ldots \}$, the natural numbers.

By Proposition~\ref{proptwo}, we know that $\bf s$ is pseudoperiodic
if there is a bound on the separation of two consecutive occurrences of the same letter. 
We can assert this as follows.
First, define a formula that asserts that $i < j$ are two
consecutive occurrences of the same letter:
$$
\twoconsec(i,j) := 
   i<j \, \wedge\, {\bf s}[i] = {\bf s}[j] \, 
   \wedge\, \forall p \ (p>i \, \wedge\, p<j) \implies
   {\bf s}[i] \not= {\bf s}[p] .
$$
Next, the formula
$$\sep(B) :=  \forall i, j \ \twoconsec(i,j) \implies j \leq i+B .$$
asserts the claim that two consecutive occurrences of the same letter
are separated by at most $B$.
Finally, the formula $\exists B\ \sep(B)$ evaluates to
\texttt{TRUE} if and only $\bf s$ is pseudoperiodic.
This solves the first problem.

Once we know that $\bf s$ is pseudoperiodic, we can
find the smallest $B$ 
such that $\sep(B)$ holds.   To do so, form the automaton for
$$ \sep(B) \, \wedge\, \neg\sep(B-1); $$
it will accept exactly one value of $B$, which
is the desired minimum.
This tells us
that ${\bf s}$ has pseudoperiod $(1,2,\ldots, B)$,
so certainly it is $B$-pseudoperiodic.  

We can now write the assertion that $\bf s$ has
a pseudoperiod of size $p$, as follows:
\begin{multline*}
\exists a_1, a_2, \ldots, a_p \ 
1\leq a_1 \, \wedge\, a_1 < a_2 \, \wedge\, \cdots \,\wedge\, a_{p-1} < a_p \, \wedge\,  \\
\forall n \ 
({\bf s}[n]={\bf s}[n+a_1] \, \vee\,
{\bf s}[n]={\bf s}[n+a_2] \, \vee\, \cdots \, \vee
{\bf s}[n]={\bf s}[n+a_p] ) .
\end{multline*}
By testing this for $p = 1, \ldots, B$, we can
find the smallest $p$ for which this holds.
This solves problem 2.

Finally, we can determine all possible pseudoperiods
of size $p$ with the formula 
\begin{multline*}
    1\leq a_1 \, \wedge\, a_1 < a_2 \, \wedge\, \cdots \, \wedge\, a_{p-1} < a_p \, \wedge\,  \\
\forall n \ 
({\bf s}[n]={\bf s}[n+a_1] \, \vee\,
{\bf s}[n]={\bf s}[n+a_2] \, \vee\, \cdots \, \vee
{\bf s}[n]={\bf s}[n+a_p] ) .
\end{multline*}
The corresponding finite automaton accepts all the possible
pseudoperiods $(a_1, \ldots, a_p)$ of size $p$.
\end{proof}

From these ideas we can prove an interesting corollary.

\begin{corollary}
Suppose the automatic sequence $\bf s$ is {\rm not\/} $k$-pseudoperiodic.  
Then there exists a constant $C$ (depending only on $\bf s$) such that for all
$k$-tuples $0< p_1 < p_2 < \cdots < p_k$,
the smallest $n$ for which
${\bf s}[n] \not\in \{ {\bf s}[n+p_1], {\bf s}[n+p_2], \ldots,
{\bf s}[n+p_k] \}$ satisfies $n \leq Cp_k$.
\label{linear-bound}
\end{corollary}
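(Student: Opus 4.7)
The plan is to translate the statement into a first-order formula over the automatic structure associated with $\bf s$, invoke the Bruy\`ere--Hansel--Michaux--Villemaire theorem to obtain an equivalent finite automaton, and then apply a pumping argument to extract the linear bound in $p_k$.

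First I would write down the formula
\begin{multline*}
\psi(n, p_1, \ldots, p_k) := 0 < p_1 < \cdots < p_k \,\wedge\, \bigwedge_{i=1}^{k} {\bf s}[n] \neq {\bf s}[n+p_i] \\
\wedge\,\forall m\, \bigl(m < n \implies \bigvee_{i=1}^{k} {\bf s}[m] = {\bf s}[m+p_i]\bigr),
\end{multline*}
asserting that $n$ is the least position at which $(p_1,\ldots,p_k)$ fails to be a pseudoperiod. By \cite{Bruyere&Hansel&Michaux&Villemaire:1994}, $\psi$ defines a $b$-automatic relation; let $A$ be a finite automaton, with $Q$ states, accepting the parallel base-$b$ encodings of the tuples $(n,p_1,\ldots,p_k)$ satisfying $\psi$, where shorter components are padded with leading zeros. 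Since $\bf s$ is not $k$-pseudoperiodic, every valid tuple $(p_1,\ldots,p_k)$ has such a (unique) minimal $n$.

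The second step is a pigeonhole argument on $A$ showing that the base-$b$ length $\ell_n$ of this minimal witness satisfies $\ell_n \leq \ell_{p_k} + Q$. Read the encodings least-significant-digit first. Beyond position $\ell_{p_k}$, every input symbol has the form $(\beta, 0, 0, \ldots, 0)$, with only the $n$-component possibly nonzero. If $\ell_n - \ell_{p_k} > Q$, some state of $A$ is revisited within this tail; excising the corresponding loop yields an input still accepted by $A$, whose $p_i$-components are unchanged (removing trailing zeros in LSD preserves their values) but whose $n$-component is shorter. A short explicit calculation---using that the top bit of $n$ is $1$---confirms that the new value $n'$ is strictly smaller than $n$, contradicting minimality. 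Hence $n < b^{\ell_n} \leq b^{Q+1} p_k = C p_k$, with $C$ depending only on $\bf s$ and $k$.

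The main obstacle is the careful bookkeeping in the pumping step---specifically, verifying that the excised encoding represents a strictly smaller integer. A compact verification notes that the surviving high-order $1$-bit of $n$, originally at position $\ell_n-1$, drops in position by the loop length $j-i$ after excision, so that the difference satisfies $n - n' \geq b^{\ell_n - 1 - (j-i)}\,(b^{j-i}-1) > 0$ (when $j < \ell_n$), while if $j = \ell_n$ the removed block already contains the top bit and directly contributes $b^{\ell_n - 1}$ to $n - n'$. Once this is in hand, the corollary follows immediately.
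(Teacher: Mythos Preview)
Your proof is correct and follows the same underlying strategy as the paper: build the automaton recognizing, in parallel, the tuples $(n,p_1,\ldots,p_k)$ where $n$ is the minimal index at which the pseudoperiod condition fails, and then extract a linear bound on $n$ from the finiteness of that automaton. The difference is only in packaging. The paper observes that $n$ is, in the terminology of \cite{Shallit:2021h}, a \emph{synchronized function} of $(p_1,\ldots,p_k)$ and quotes the ready-made linear bound from \cite[Thm.~8]{Shallit:2021h}; you instead carry out the pumping argument that underlies that theorem by hand, excising a state-repetition in the all-zeros tail of the $(p_i)$-tracks to contradict minimality of $n$. Your direct argument is self-contained and makes the dependence $C=b^{Q+1}$ explicit, while the paper's citation keeps the proof to a few lines. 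One small technicality to make fully explicit in your write-up: take $A$ to accept representations with arbitrarily many leading (high-order) zeros on \emph{all} components, so that the excised word is still a legal input even if its top digit in the $n$-track happens to be $0$; this can always be arranged at the cost of enlarging $Q$ by a constant.
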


\begin{proof}
A trivial variation on the previous arguments shows that if $\bf s$
is automatic, then there is an automaton accepting, in parallel,
$n, p_1, p_2, \ldots, p_k$ such that $n$ is the smallest natural
number satisfying
${\bf s}[n] \not\in \{ {\bf s}[n+p_1], {\bf s}[n+p_2], \ldots,
{\bf s}[n+p_k] \}$.   Thus, in the terminology of
\cite{Shallit:2021h}, this $n$ can be considered a ``synchronized
function'' of $(p_1, \ldots, p_k)$.  We can then apply the known
linear bound on synchronized functions \cite[Thm.~8]{Shallit:2021h}
to deduce the existence
of $C$ such that $n \leq C p_k$.
\end{proof}

Although, as we have just seen, these problems are all decidable for automatic sequences in theory, in practice, the automata that result can be extremely large and require a lot of computation to find.
We can use \texttt{Walnut}, a theorem-prover originally designed by 
\cite{Mousavi:2016} to translate logical formulas to automata.   

\begin{example}
Let us consider an example, the \emph{Fibonacci word} 
${\bf f} = 01001010 \cdots$, the fixed point of the
morphism $0 \rightarrow 01$, $1 \rightarrow 0$.
The following \texttt{Walnut} code demonstrates
that it is $2$-pseudoperiodic.  (In fact, this follows
from the much more general Proposition~\ref{prop5}
below.)
\begin{verbatim}
eval isfibpseudo "?msd_fib Ea,b 1<=a & a<b & 
   An (F[n]=F[n+a]|F[n]=F[n+b])":
\end{verbatim}
It returns \texttt{TRUE}.

We can determine all possible pseudoperiods of size $2$ using 
\texttt{Walnut}, as follows:
\begin{verbatim}
def fib2pseudoperiod "?msd_fib 1<=a & a<b & 
   An (F[n]=F[n+a]|F[n]=F[n+b])":
\end{verbatim}
The resulting automaton accepts all pairs $(a,b)$
that are pseudoperiods of $\bf f$, in Fibonacci
representation.  It has 28 states and is displayed in 
Figure~\ref{fig5}.
\begin{figure}[htb]
\begin{center}
    \includegraphics[width=5.5in]{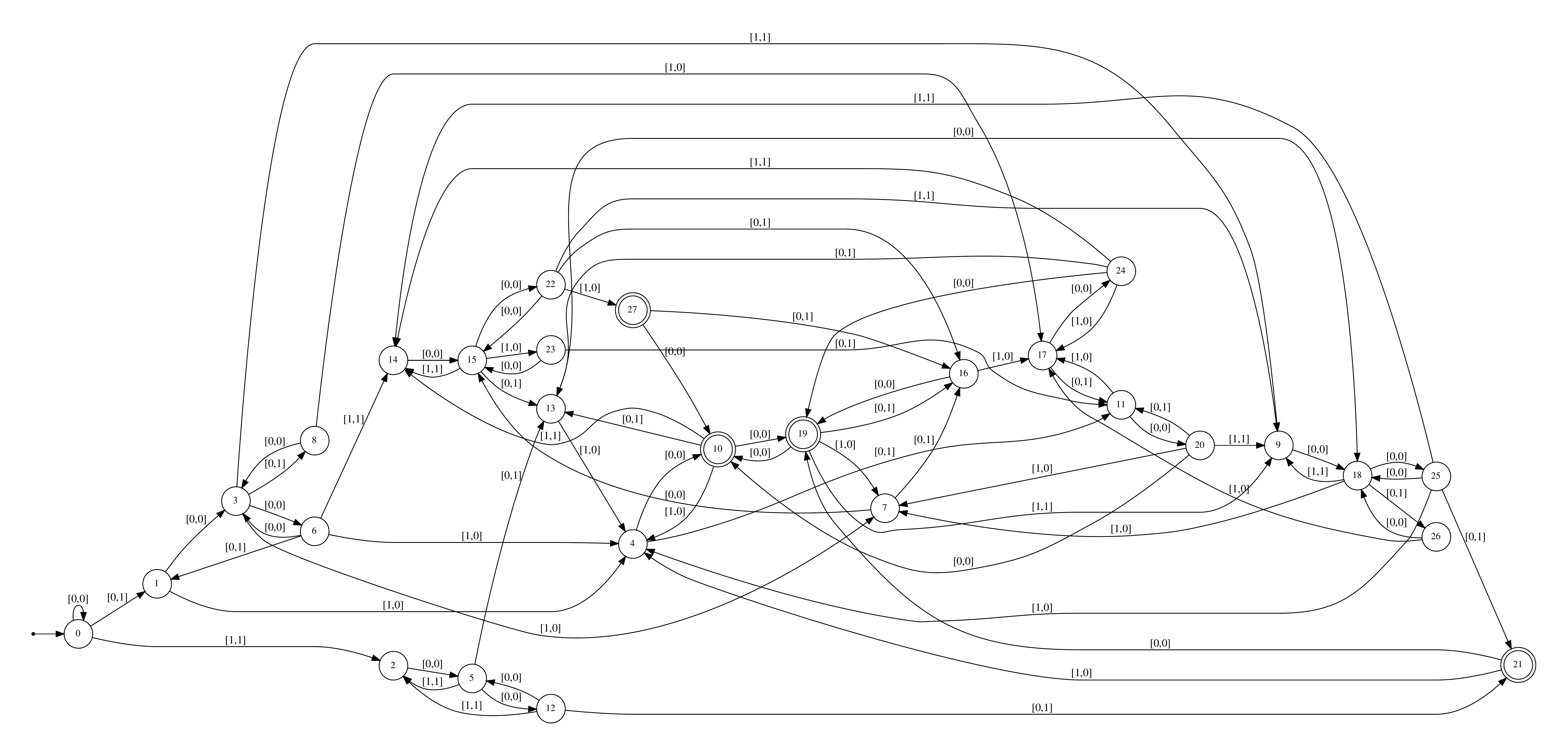}
\end{center}
\caption{Pseudoperiods of size $2$ for the Fibonacci word.}
\label{fig5}
\end{figure}
\end{example}

We now consider a famous uncountable class of binary sequences, the \emph{Sturmian words} \cite[Chap.~2]{Lothaire:2002}.   These are infinite words
of the form ${\bf s}_{\alpha, \beta} := (\lfloor (n+1) \alpha + \beta \rfloor - \lfloor n \alpha + \beta \rfloor)_{n \geq 1}$, where $0 < \alpha < 1$ is an irrational
real number and $0 \leq \beta < 1$.

\begin{proposition}
Every Sturmian sequence is $2$-pseudoperiodic but not $1$-pseudoperiodic.
\label{prop5}
\end{proposition}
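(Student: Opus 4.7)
The plan is to treat the two assertions separately. Non-$1$-pseudoperiodicity is the classical aperiodicity of Sturmian words: if $s_{\alpha,\beta}$ had period $p$, then $s[n+p] = s[n]$ for all $n$ would force the function $n \mapsto \lfloor (n+p)\alpha + \beta \rfloor - \lfloor n\alpha + \beta \rfloor$ to be constant, yet this function necessarily takes both values $\lfloor p\alpha \rfloor$ and $\lceil p\alpha \rceil$ as $n$ varies (by equidistribution of $\{n\alpha + \beta\}$ modulo $1$), contradicting the irrationality of $\alpha$.

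For $2$-pseudoperiodicity, my plan is to use the standard presentation of $s_{\alpha,\beta}$ as a coding of the irrational rotation $T \colon x \mapsto x+\alpha$ on $\mathbb{R}/\mathbb{Z}$: with the half-open convention forced by the floor function, $s_{\alpha,\beta}[n] = \chi_I(T^n(\beta))$ for the arc $I = [1-\alpha, 1)$ of length $\alpha$. For each $a \geq 1$, the \emph{discrepancy set} $D_a := \{x : \chi_I(x) \neq \chi_I(x + a\alpha)\}$ equals the symmetric difference $I \triangle (I - a\alpha)$, which a short computation shows is a union of two arcs of total length $2\|a\alpha\|$ placed adjacent to the two endpoints of $I$; which pair of sides of these endpoints the arcs lie on is determined by the sign of $a\alpha - [a\alpha]$, where $[\,\cdot\,]$ denotes rounding to the nearest integer.

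I would then take $a = q_k$ and $b = q_{k+1}$, where $p_k/q_k$ and $p_{k+1}/q_{k+1}$ are two consecutive convergents of the continued fraction expansion of $\alpha$, choosing $k$ large enough that $\|q_k\alpha\|$ and $\|q_{k+1}\alpha\|$ are both smaller than $\min(\alpha, 1-\alpha)$. Since consecutive convergents approximate $\alpha$ from opposite sides, the two arcs of $D_{q_k}$ sit on the opposite sides of the endpoints of $I$ from the two arcs of $D_{q_{k+1}}$, and an explicit comparison of the four arcs shows $D_{q_k} \cap D_{q_{k+1}} = \emptyset$. In particular no orbit point $T^n(\beta)$ can lie in both $D_{q_k}$ and $D_{q_{k+1}}$, which is exactly the condition that for every $n$, at least one of $s[n+q_k]$ and $s[n+q_{k+1}]$ equals $s[n]$; so $(q_k, q_{k+1})$ is a pseudoperiod of size $2$. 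The main nuisance to watch for will be carefully aligning the half-open conventions so that the identification $s[n] = \chi_I(T^n(\beta))$ remains correct even for the exceptional $\beta \in \mathbb{Z} + \mathbb{Z}\alpha$ whose orbits visit $\partial I$; choosing $I = [1-\alpha, 1)$ at the outset is designed to make these endpoint issues cancel.
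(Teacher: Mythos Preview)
Your argument is correct. Both halves work: the aperiodicity argument is the standard one (the paper phrases it as a density argument, noting that the $1$'s occur with irrational frequency $\alpha$, which is essentially the same observation), and your dynamical computation of $D_a = I \mathbin{\triangle} (I - a\alpha)$ as two short arcs whose side is governed by the sign of $q_k\alpha - p_k$ is accurate; the alternating-sign property of consecutive convergents then gives $D_{q_k}\cap D_{q_{k+1}}=\emptyset$ for large $k$, exactly as you say.

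The paper, however, takes a much shorter combinatorial route for $2$-pseudoperiodicity. Assuming (without loss of generality, up to complementation) that $\alpha<1/2$, it simply observes that $\mathbf{s}_{\alpha,\beta}$ is, after a possible prefix, an infinite concatenation of the two blocks $0^{c_1-1}1$ and $0^{c_1}1$, where $c_1$ is the first partial quotient of $\alpha$. From this block structure one reads off directly that $(c_1,c_1+1)$ is a pseudoperiod: the $1$ at the end of each block is followed by another $1$ at distance $c_1$ or $c_1+1$, and every $0$ is followed by a $0$ at distance $1$ unless it is the last in its run, in which case the next $0$ is at distance $2\le c_1$. No geometry, no choice of large $k$, no endpoint bookkeeping.

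What each buys: the paper's argument is a two-line proof yielding an explicit small pseudoperiod depending only on $c_1$. Your rotation argument is heavier but more conceptual; it explains \emph{why} pseudoperiods exist in terms of the return structure of the rotation, produces an infinite family $(q_k,q_{k+1})$ of pseudoperiods, and would generalize more readily (for example to codings by other intervals). The price is the extra care with half-open conventions and with taking $k$ large enough that $\|q_k\alpha\|+\|q_{k+1}\alpha\|<\min(\alpha,1-\alpha)$, which you correctly flag.
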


\begin{proof}
If ${\bf s}_{\alpha,\beta}$ were $1$-pseudoperiodic, it would be periodic and hence the letter $1$ would occur with rational density.
However, the $1$'s appear in ${\bf s}_{\alpha,\beta}$ with density
$\alpha$, which is irrational.

Now let $[0, c_1, c_2, \ldots]$
be the continued fraction expansion of $\alpha$.
Without loss of generality, we can assume that
$\alpha < 1/2$; otherwise consider ${\bf s}_{1-\alpha,0}$
which is the binary complement of ${\bf s}_{\alpha,0}$.   Hence
$c_1 \geq 2$.

It is easy to see from the definition of 
${\bf s}_{\alpha,\beta}$ that ${\bf s}_{\alpha,0}$ is a suffix of an infinite concatenation
of blocks of the form $0^{c_1-1}1$ and $0^{c_1} 1$.
It follows that $(c_1, c_1+1)$ is a pseudoperiod.
\end{proof}

\begin{remark}
There are, of course, non-Sturmian sequences that
are $2$-pseudoperiodic but not $1$-pseudoperiodic.  For example,
every sequence in $\{01,001\}^\omega$ has pseudoperiod $(2,3)$.
\end{remark}

\begin{remark}
Trivial observation:  to determine whether a given fixed
tuple $(p_1, p_2, \ldots, p_k)$ is a pseudoperiod
of an infinite sequence $\bf s$, it suffices
to examine all of the factors of length $p_k + 1$ of
$\bf s$.  
\end{remark}

\section{Shevelev's problems}
\label{sec3}

In this section, we consider some results of 
Vladimir \cite{Shevelev:2012}.  We reprove some of his results in a much simpler manner,
obtain new results, and completely solve one of his open questions.  

Recall from Section~\ref{notation} that the
Thue-Morse sequence ${\bf t} = 01101001\cdots $ is the infinite
fixed point, starting with $0$, of the map
sending $0 \rightarrow 01$ and $1 \rightarrow 10$.   Shevelev was interested in the pseudoperiodicity of $\bf t$,
and gave a number of theorems and open questions involving
this sequence.  We are able to prove all of the theorems and conjectures in 
\cite{Shevelev:2012} using our method, with the exception of his Conjecture 1.   Luckily, this conjecture was already proved by
Allouche \cite[Thm.~3.1]{Allouche:2015}.

\begin{proposition}
The Thue-Morse sequence is $3$-pseudoperiodic, but not $2$-pseudoperiodic.
\end{proposition}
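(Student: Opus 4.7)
The plan is to handle the two directions of the proposition separately, with the positive direction following essentially by inspection and the negative direction handed off to \texttt{Walnut}.

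For the easier half, that $\bf t$ is $3$-pseudoperiodic, I would invoke Proposition~\ref{prop1}(b). It is classical that the Thue-Morse word is overlap-free (and in particular cube-free), so its longest run has length $2$. Proposition~\ref{prop1}(b) then immediately yields that $(1,2,3)$ is a pseudoperiod of $\bf t$, which in particular shows that $\bf t$ is $3$-pseudoperiodic. (If one wishes to avoid appealing to overlap-freeness as a classical fact, the statement ``every run of $\bf t$ has length $\leq 2$'' is itself a trivial one-line \texttt{Walnut} check.)

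For the harder half, that $\bf t$ is \emph{not} $2$-pseudoperiodic, the natural approach in the style of this paper is to use the decidability framework set up in the previous corollary. Writing \texttt{T} for the usual automaton for $\bf t$, one submits the formula
\begin{verbatim}
eval tm2pseudo "Ea,b 1<=a & a<b &
   An (T[n]=T[n+a]|T[n]=T[n+b])":
\end{verbatim}
to \texttt{Walnut} and verifies that it evaluates to \texttt{FALSE}. Since $\bf t$ is $2$-automatic and the formula has bounded quantifier alternation, the computation is effective and produces a rigorous proof. Combined with the positive direction, this yields the proposition.

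The main obstacle I anticipate is not logical but purely computational: after eliminating the inner universal quantifier over $n$ and then projecting away the existential variables $a$ and $b$, the intermediate automata could be fairly large. However, given that the base-$2$ automaton for $\bf t$ has only two states, I expect the final computation to terminate quickly. As an optional sanity check, one could extract, for any prescribed pair $1 \leq a < b$, a short explicit factor of $\bf t$ of length roughly $b + O(1)$ in which both positions $n+a$ and $n+b$ differ from position $n$; but the \texttt{Walnut} verdict alone suffices.
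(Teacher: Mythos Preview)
Your proposal is correct and takes essentially the same approach as the paper. For the positive direction the paper observes that every word in $\{01,10\}^\omega$ has pseudoperiod $(1,2,3)$, which is just another way of saying that the runs have length at most $2$---the same content as your appeal to Proposition~\ref{prop1}(b) via cube-freeness; for the negative direction the paper runs the identical \texttt{Walnut} query (differing only in variable names) and reports \texttt{FALSE}.
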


\begin{proof}
The first statement follows from the (almost trivial) fact
that every word in $\{01, 10\}^\omega$ has pseudoperiod $(1,2,3)$.

For the second statement, we use \texttt{Walnut} again.
To prove the second half of the theorem, 
we assert $2$-pseudoperiodicity as follows and show that it is false:
\[
    \exists a,b \ (a \geq 1) \land (a < b) \land 
    \forall i\ ( {\bf t}[i] \in \{ {\bf t}[i+a], {\bf t}[i+b]\}).
\]
Translating the assertion into \texttt{Walnut}, we have:
\begin{verbatim}
eval twopseudotm "Ea,b (a>=1 & a<b) & Ai (T[i]=T[i+a] | T[i]=T[i+b])":
# returns FALSE
# 23 ms
\end{verbatim}
This returns \texttt{FALSE}, which proves that 
the Thue-Morse sequence is not $2$-pseudoperiodic.
\end{proof}

Since $\bf t$ is not $2$-pseudoperiodic, we know from 
Corollary~\ref{linear-bound} that there exists a constant $C$ such that
for $1 \leq a < b$ we have
${\bf t}[n] \not\in \{ {\bf t}[n+a], {\bf t}[n+b] \}$
for some $n \leq Cb$.   For the Thue-Morse
word, we can prove the following bound:
\begin{theorem}
\leavevmode
\begin{itemize}
\item[(a)] For all $a, b$ with $1 \leq a < b$ there exists
$n \leq {5\over3}b$ 
such that ${\bf t}[n] \not\in \{ {\bf t}[n+a], {\bf t}[n+b] \}$.

\item[(b)] The previous result is optimal, in the sense that if
the bound ${5\over 3}b$ is reduced, then there are infinitely many
counterexamples.
\end{itemize}
\label{tm-linear-bound}
\end{theorem}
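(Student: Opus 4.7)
The plan is to verify both parts using \texttt{Walnut}, following the same methodology that yielded the earlier pseudoperiodicity results in the paper. For part (a), I would directly encode the assertion as a first-order formula over $\mathbb{N}$ using the Thue-Morse automaton \texttt{T}: for all $a, b$ with $1 \le a < b$, there exists $n$ satisfying $3n \le 5b$ and ${\bf t}[n] \ne {\bf t}[n+a]$ and ${\bf t}[n] \ne {\bf t}[n+b]$. The rational constant $5/3$ is handled by clearing denominators; Walnut expresses $3n \le 5b$ via repeated addition (\texttt{n+n+n<=b+b+b+b+b}). Evaluating the formula should return \texttt{TRUE}.

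For part (b), the strategy is to construct, again in \texttt{Walnut}, the synchronized automaton accepting triples $(a, b, n)$ with $n$ the smallest index such that ${\bf t}[n] \notin \{{\bf t}[n+a], {\bf t}[n+b]\}$. Such an automaton exists by Corollary~\ref{linear-bound}. Optimality of $5/3$ can then be phrased as: for every threshold $M$ and every small positive constant $D$, there exist $a < b$ with $b \ge M$ and $3n > 5b - D$; equivalently, one asserts that the formula from part (a) becomes false once $5b$ is replaced by $5b - D$ for any fixed $D$. If this returns \texttt{TRUE} in \texttt{Walnut}, we have infinitely many near-extremal pairs, establishing optimality. A complementary route is to read off an explicit extremal family from a cycle in the synchronized automaton (one expects $a, b$ whose binary representations exhibit a specific self-similar pattern reflecting the structure of $\bf t$, such as $a \approx 2^k$ and $b \approx 3 \cdot 2^{k-1}$ or similar), and then verify by a separate Walnut query that this family achieves $n/b \to 5/3$.

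The main obstacle is part (b). Part (a) is a routine \texttt{Walnut} verification once the multiplicative constant is cleared, but establishing tightness requires more care. The practical difficulty is twofold: first, the synchronized automaton for the minimum violating $n$ may be large and require some computational resources to build; second, one needs to guarantee that the \emph{supremum} ratio $n/b$ over infinitely many pairs is exactly $5/3$, not merely bounded above by it. The cleanest path is likely the ``replace $5b$ by $5b-D$'' negation test above, since this packages the entire optimality claim into a single first-order formula whose truth \texttt{Walnut} can decide.
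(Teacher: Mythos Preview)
Your approach to part (a) is exactly what the paper does: clear the denominator and feed the resulting first-order sentence to \texttt{Walnut}.

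For part (b), the paper takes a considerably more direct route than you propose. Rather than first building the synchronized automaton for the minimum violating index, or introducing an auxiliary parameter $D$, the paper writes a single sentence asserting that for every $m$ there exist $a,b$ with $1\le a<b$ and $b>m$ such that \emph{every} $i$ with $3i<5b$ satisfies ${\bf t}[i]\in\{{\bf t}[i+a],{\bf t}[i+b]\}$, and \texttt{Walnut} returns \texttt{TRUE}. This is in effect your ``replace $5b$ by $5b-D$'' idea with $D$ collapsed to the minimal integer value (equivalently, replacing $\le$ by strict $<$): for integer variables that already suffices, so no quantification over $D$ is needed, and no intermediate automaton for the minimal $n$ has to be constructed or inspected. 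Your plan would work, but the paper's one-line formulation sidesteps precisely the computational overhead you flagged as the main obstacle.
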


\begin{proof}
To prove (a) and (b), we can use the following \texttt{Walnut} commands:
\begin{verbatim}
eval casea "Aa,b (1<=a & a<b) => En (3*n<=5*b) & 
   T[n]!=T[n+a] & T[n]!=T[n+b]":
# evaluates to TRUE

eval caseb "Am Ea,b 1<=a & a<b & b>m & Ai (3*i<5*b) => 
   (T[i]=T[i+a]|T[i]=T[i+b])":
# evaluates to TRUE
\end{verbatim}
\end{proof}

We now turn to Shevelev's Proposition 1 in \cite{Shevelev:2012}
which (in our terminology) asserts the following:
\begin{theorem}
The triples
$\{(a, a+2^k, a+2^{k+1})\, : \, a \geq 1, k \geq 0 \}$
are pseudoperiods for the Thue-Morse sequence.
\label{shev1}
\end{theorem}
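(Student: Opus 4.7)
The plan is to reduce the claim to a simple non-equality statement about three values of the Thue--Morse sequence. Writing the pseudoperiod condition out, we must show $\mathbf{t}[n] \in \{\mathbf{t}[n+a], \mathbf{t}[n+a+2^k], \mathbf{t}[n+a+2^{k+1}]\}$ for every $n \geq 0$. Because the alphabet is binary, this membership is automatic as soon as the three values on the right do not all coincide: if they take both of the bits $0$ and $1$, then the set must contain $\mathbf{t}[n]$. So the entire theorem reduces to the following purely arithmetic claim: for every $m \geq 1$ and every $k \geq 0$, the values $\mathbf{t}[m]$, $\mathbf{t}[m+2^k]$, $\mathbf{t}[m+2^{k+1}]$ are not all three equal.

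To establish this, I would invoke the standard identity $\mathbf{t}[j] \equiv s_2(j) \pmod{2}$, where $s_2$ counts binary digits, and split on the value of bit $k$ of $m$. Writing $m = q \cdot 2^{k+1} + r$ with $0 \leq r < 2^{k+1}$, the two subcases are $r < 2^k$ and $r \geq 2^k$. In the first subcase, adding $2^k$ simply flips bit $k$ of $m$ from $0$ to $1$ with no carry, so $s_2(m+2^k) = s_2(m)+1$ and hence $\mathbf{t}[m] \neq \mathbf{t}[m+2^k]$. In the second subcase bit $k$ of $m$ equals $1$, so adding $2^k$ forces a carry out of bit $k$ (clearing it), while adding $2^{k+1}$ leaves bit $k$ untouched; a short digit-sum computation then gives $\mathbf{t}[m+2^k] \neq \mathbf{t}[m+2^{k+1}]$. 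Either way, at least two of the three values disagree, completing the reduction.

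The argument is really just careful bookkeeping of binary carries; the only mildly delicate step is the second subcase, where one must track exactly where the carry generated by $+2^k$ ends up. As an alternative path, very much in the style of the rest of the paper, the statement can also be verified in \texttt{Walnut}: the predicate ``$p$ is a power of $2$'' is a trivial regular property of the base-$2$ representation of $p$, so after defining \texttt{Power2} the single first-order formula
\[
\forall a, n, p\ \bigl( (a \geq 1 \land \texttt{Power2}(p)) \implies \mathbf{t}[n]=\mathbf{t}[n+a] \lor \mathbf{t}[n]=\mathbf{t}[n+a+p] \lor \mathbf{t}[n]=\mathbf{t}[n+a+2p] \bigr)
\]
can be handed directly to the prover and should evaluate to \texttt{TRUE}.
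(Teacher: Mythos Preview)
Your argument is correct, and it takes a genuinely different route from the paper's. The paper proves Theorem~\ref{shev1} entirely by machine: it encodes the condition as the first-order formula $\shevcond(a,b,c)$, feeds it to \texttt{Walnut}, and reports \texttt{TRUE}. Your alternative \texttt{Walnut} formulation at the end of your proposal is essentially the same as the paper's proof.

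Your main argument, by contrast, is an elementary digit-sum proof, and it actually establishes something slightly stronger than the bare statement of the theorem: you show that for every $m\geq 1$ and $k\geq 0$ the triple $\mathbf{t}[m],\mathbf{t}[m+2^k],\mathbf{t}[m+2^{k+1}]$ always contains both bits. This is exactly the ``if'' direction of what the paper later states as Shevelev's Theorem~2, so your reduction is doing double duty. Your second subcase, which you flag as ``mildly delicate,'' is in fact perfectly clean once one writes $m=q\cdot 2^{k+1}+2^k+r'$ with $0\leq r'<2^k$: then $m+2^k=(q+1)2^{k+1}+r'$ and $m+2^{k+1}=(q+1)2^{k+1}+2^k+r'$ differ in bit $k$ only, so their digit sums differ by~$1$. (Equivalently, $m':=m+2^k$ has bit $k$ equal to $0$, so Case~1 applied to $m'$ gives $\mathbf{t}[m']\neq\mathbf{t}[m'+2^k]$.)

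What each approach buys: the paper's \texttt{Walnut} proof is uniform with the rest of the article and requires no insight, but gives no explanation of \emph{why} the result holds. Your hand proof is short, self-contained, explains the phenomenon, and incidentally yields half of the next theorem in the paper for free.
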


Shevelev's proof of this was rather long and involved.
We can prove it almost instantly with \texttt{Walnut}, as follows:

\begin{proof}
We express the conditions placed on the triples as follows.
\begin{align*}
    \poweroftwo(x) &:= \exists k\ x=2^k \\
    \shevcond(a,b,c) &:= (a \geq 1) \land (\exists x\ \poweroftwo(x) \land (b=a+x) \land (c=a+2x)) .
\end{align*}
We write the proposition as:
\[ \forall a,b,c, i \ \shevcond(a,b,c) 
\implies ({\bf t}[i] \in \{ {\bf t}[i+a], {\bf t}[i+b], {\bf t}[i+c]\}). \]
Translating the above into \texttt{Walnut} commands,
we have:
\begin{verbatim}
reg power2 msd_2 "0*10*":
def shevcond "(a>=1) & (Ex $power2(x) & (b=a+x) & (c=a+2*x))":
# returns a DFA with 7 states
# 13 ms
eval prop1 "Aa,b,c,i $shevcond(a,b,c) => 
    (T[i]=T[i+a] | T[i]=T[i+b] | T[i]=T[i+c])":
# returns TRUE
# 6 ms
\end{verbatim}
The assertion returns \texttt{TRUE}, which proves that 
the Thue-Morse sequence is $3$-pseudoperiodic.
\end{proof}

Shevelev observed that Theorem~\ref{shev1} did not
characterize all such triples.  In his Proposition 2,
he showed $(1,8,9)$ is a pseudoperiod.  We can do this
with \texttt{Walnut} as follows:
\begin{verbatim}
eval shevprop2 "Ai (T[i]=T[i+1])|(T[i]=T[i+8])|(T[i]=T[i+9])":
# 97 ms
# return TRUE
\end{verbatim}

These two results caused Shevelev to pose his ``Open Question 1'',
which in our terminology is the following:
\begin{openproblem}
Characterize \emph{all} triples $(a,b,c)$ with $1 \leq a<b<c$ that are pseudoperiods
for the Thue-Morse sequence.
\end{openproblem}

Shevelev was unable to solve this, but using our 
methods, we can easily solve it.

\begin{theorem}
There is a DFA of $53$ states that accepts exactly the triples
$(a,b,c)$ such that $1 \leq a < b < c$ is a pseudoperiod of $\bf t$.
\end{theorem}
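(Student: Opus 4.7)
The plan is to follow exactly the methodology already used in Section~\ref{sec3} and hand the problem to \texttt{Walnut}: encode the definition of ``$(a,b,c)$ is a pseudoperiod of size $3$ of $\mathbf{t}$'' as a first-order formula in the appropriate structure, and let \texttt{Walnut} construct and minimize the associated DFA. Concretely, I would issue a \texttt{def} command of the form
\begin{verbatim}
def tmpp3 "a>=1 & a<b & b<c &
   Ai (T[i]=T[i+a]|T[i]=T[i+b]|T[i]=T[i+c])":
\end{verbatim}
The conjuncts $a \geq 1$, $a < b$, $b < c$ impose the normalization $1 \le a < b < c$, and the universally quantified disjunction is, verbatim, the defining condition for a pseudoperiod (for an infinite word, so there is no trailing range constraint on $i$). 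By the Bruy\`ere--Hansel--Michaux--Villemaire framework already invoked in Section~\ref{sec2}, the resulting predicate defines a regular language of triples written in parallel base-$2$ notation, so \texttt{Walnut} is guaranteed to produce a minimized DFA.

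Once the automaton is produced, the theorem reduces to a single observation: the minimized DFA has exactly $53$ states. Since \texttt{Walnut} performs minimization automatically when evaluating a \texttt{def}, this number can simply be read off the output, and it is meaningful because the minimal DFA for a regular language is unique up to isomorphism.

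The main obstacle I anticipate is not mathematical but computational. The formula contains three free variables plus an inner universal quantifier over $i$, which \texttt{Walnut} handles via existential quantification on the negated body followed by complementation; projection on a nondeterministic automaton followed by determinization and complementation can blow up the intermediate state counts considerably, even when the final minimized DFA is modest in size (here, only $53$ states). If the direct invocation is slow, the natural workaround is to rewrite the formula via a named intermediate predicate (for example, first defining a predicate \texttt{viol(n,a,b,c)} asserting that position $n$ witnesses failure at $(a,b,c)$, then negating its existential closure), which often keeps the intermediate automata smaller. Apart from this engineering consideration, no further argument is required: decidability of the predicate together with \texttt{Walnut}'s minimization delivers the claim.
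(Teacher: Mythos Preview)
Your proposal is correct and follows essentially the same approach as the paper: the paper issues the identical \texttt{Walnut} command (named \texttt{triple} rather than \texttt{tmpp3}) and reads off the $53$-state minimized DFA, noting that the computation is indeed heavy (about $4356$ seconds and $18$\,GB). Your anticipated computational obstacle matches the paper's reported experience.
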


\begin{proof}
We want to characterize the triples 
$(a,b,c)$ such that 
\[\triple(a,b,c) :=
(a \geq 1) \land (a<b) \land (b<c) 
\land \forall i\ {\bf t}[i] \in \{ {\bf t}[i+a], {\bf t}[i+b], 
{\bf t}[i+c]\} .\]

We construct the following
DFA \texttt{triple} in \texttt{Walnut} 
to answer the question.
\begin{verbatim}
def triple "(a>=1) & (a<b) & (b<c) &
   Ai (T[i]=T[i+a] | T[i]=T[i+b] | T[i]=T[i+c])":
# returns a DFA with 53 states
# 4356513 ms
\end{verbatim} 
This gives us an automaton of 53 states, which is presented in the Appendix.
Determining it was a major calculation in \texttt{Walnut},
requiring 4356 seconds of CPU time and 18 GB of storage. 
The complete answer to Shevelev's question is then
the set of triples accepted by our DFA \texttt{triple}.
\end{proof}

Because the answer is so complicated, it is not that surprising
that Shevelev did not find a simple answer to his question.

Now that we have the automaton \texttt{triple},
we can easily check any triple $(a,b,c)$ to see if it
is a pseudoperiod of $\bf t$ in $O( \log abc)$ time,
merely by feeding the automaton with the base-$2$
representations of the triple $(a,b,c)$.

Furthermore, our automaton can be used to easily prove
other aspects of the pseudoperiods of the Thue-Morse sequence.   For example:
\begin{corollary}
\leavevmode
\begin{itemize}
\item[(a)]
For each $a \geq 1$ there exist arbitrarily large $b, c$ such that
$(a,b,c)$ is a pseudoperiod of $\bf t$.
\item[(b)] For each $b \geq 2$ there exist  pairs  $a,c$ 
such that
$(a,b,c)$ is a pseudoperiod of $\bf t$.
\item[(c)] For each $c \geq 3$ there exist pairs $a,b$ such that
$(a,b,c)$ is a pseudoperiod of $\bf t$.
\end{itemize}
\end{corollary}

\begin{proof}
We use the following \texttt{Walnut} code.
\begin{verbatim}
eval tmpa "Aa,m (a>=1) => Eb,c b>m & c>m & $triple(a,b,c)":
eval tmpb "Ab (b>=2) => Ea,c $triple(a,b,c)":
eval tmpc "Ac (c>=3) => Ea,b $triple(a,b,c)":
\end{verbatim}
and \texttt{Walnut} returns \texttt{TRUE} for all three.
\end{proof}

We now look at the possible distances between pseudoperiods of $\bf t$.
\begin{corollary}
\leavevmode
\begin{itemize}
\item[(a)] $\{ b-a \, : \, \exists c \ (a,b,c) 
\text{ is a pseudoperiod of } {\bf t} \} = 
\{ (2^j-1)2^i \, : \, j\geq 1, i \geq 0 \} \, \cup \,
\{ (2^{2j-1} + 1)2^i \, : \, j \geq 1, i \geq 0 \} \, \cup \, 
\{ 11 \cdot 2^i \, : \, i \geq 0 \} $.
\item[(b)] $\{ c-b \, : \, \exists a \ (a,b,c) 
\text{ is a pseudoperiod of } {\bf t} \} = 
\{ (2^j - 1)2^i \, : \, j \geq 1, i \geq 0 \} \, \cup \,
\{ (2^j + 1)2^i \, : \, j \geq 1, i \geq 0 \} $.
\end{itemize}
\end{corollary}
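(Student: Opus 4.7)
The plan is to derive this corollary computationally from the $53$-state DFA \texttt{triple} of the previous theorem. Since both sets on the left-hand sides are, by construction, projections of a regular relation onto a single coordinate, they are themselves $2$-automatic, and so the corollary reduces to checking equality of two explicit regular languages.

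First, I would form DFAs for the projections by a single existential-quantification command in \texttt{Walnut}:
\begin{verbatim}
def diffba "Ea,b,c b=a+d & $triple(a,b,c)":
def diffcb "Ea,b,c c=b+d & $triple(a,b,c)":
\end{verbatim}
giving automata for the sets $L_a = \{ b-a \}$ and $L_b = \{ c-b \}$ of the statement, respectively.

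Second, I would express each summand on the right-hand sides as a regular expression over base-$2$ representations (most-significant digit first, with leading zeros permitted):
\[
\begin{array}{rl}
(2^j-1)\,2^i: & 0^*\,1^+\,0^*, \\[2pt]
(2^{2j-1}+1)\,2^i: & 0^*\,1\,(00)^*\,1\,0^*, \\[2pt]
11 \cdot 2^i: & 0^*\,1011\,0^*, \\[2pt]
(2^j+1)\,2^i: & 0^*\,1\,0^+\,1\,0^*.
\end{array}
\]
Each pattern is a one-line \texttt{reg} declaration, and taking the appropriate disjunctions yields automata $R_a$ and $R_b$ for the two right-hand sides. The two commands
\begin{verbatim}
eval checka "Ad $diffba(d) <=> $Ra(d)":
eval checkb "Ad $diffcb(d) <=> $Rb(d)":
\end{verbatim}
then verify the corollary.

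The verification step is mechanical; the real work is the prior human task of guessing the correct closed form for $L_a$ and $L_b$. In practice, I would minimize \texttt{diffba} and \texttt{diffcb}, enumerate their short accepted words, and read off candidate arithmetic families. For part~(a) the main obstacle is spotting the isolated third family $11\cdot 2^i$, which is not part of either uniform pattern and whose presence echoes Shevelev's observation that Theorem~\ref{shev1} fails to capture the pseudoperiod $(1,8,9)$; for part~(b) both families $(2^j\pm 1)\,2^i$ are relatively easy to read off from the minimized projection.
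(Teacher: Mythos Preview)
Your proposal is correct and essentially identical to the paper's own proof: the paper likewise defines regular expressions for the right-hand sides (\texttt{0*11*0*|0*1(00)*10*|0*10110*} and \texttt{0*100*10*|0*11*0*}, matching your patterns exactly) and then issues two \texttt{eval} commands checking equivalence with the projections $\exists a,b,c\ \texttt{\$triple}(a,b,c) \wedge b=a+n$ and $\exists a,b,c\ \texttt{\$triple}(a,b,c) \wedge c=b+n$. The only cosmetic difference is that the paper inlines the existential projection inside the \texttt{eval} rather than first forming separate \texttt{diffba}/\texttt{diffcb} automata, and it omits your helpful remarks on how one would discover the closed forms in the first place.
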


\begin{proof}
We use the following \texttt{Walnut} code.
\begin{verbatim}
reg parta msd_2 "0*11*0*|0*1(00)*10*|0*10110*":
reg partb msd_2 "0*100*10*|0*11*0*":
eval checka "An $parta(n) <=> (Ea,b,c $triple(a,b,c) & b=a+n)":
eval checkb "An $partb(n) <=> (Ea,b,c $triple(a,b,c) & c=b+n)":
\end{verbatim}
and \texttt{Walnut} returns \texttt{TRUE} twice.
\end{proof}

We now turn to Shevelev's Theorem 2 in \cite{Shevelev:2012}.
\begin{theorem}
    The only triples of distinct positive integers $(a,b,c)$ 
    for which both ${\bf t}[i]$ and
    $\overline{{\bf t}[i]}$ belong to
    $\{ {\bf t}[i+a], {\bf t}[i+b], {\bf t}[i+c] \}
    $
    for all $ i \geq 0 $ 
    are those satisfying
    $b=a+2^k$ and $c=a+2^{k+1}$
    for some $k \geq 0$.
\end{theorem}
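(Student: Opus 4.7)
The plan is to verify the theorem computationally using \texttt{Walnut}, in the same spirit as the other results of this section. The key preliminary observation is that since $\bf t$ is binary, the condition ``both ${\bf t}[i]$ and $\overline{{\bf t}[i]}$ belong to $\{{\bf t}[i+a], {\bf t}[i+b], {\bf t}[i+c]\}$'' is equivalent to the simpler condition that the three values ${\bf t}[i+a], {\bf t}[i+b], {\bf t}[i+c]$ are not all equal. Under the convention $1 \leq a < b < c$, this reduces the theorem to showing that the set of triples satisfying
\[
\forall i \; \neg\bigl({\bf t}[i+a] = {\bf t}[i+b] \land {\bf t}[i+b] = {\bf t}[i+c]\bigr)
\]
coincides exactly with the set defined by the predicate $\shevcond(a,b,c)$ introduced earlier, namely the triples with $b = a + 2^k$ and $c = a + 2^{k+1}$ for some $k \geq 0$.

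Next I would encode the ``not all equal'' condition as a \texttt{Walnut} predicate, for instance
\begin{verbatim}
def mwtriple "(a>=1) & (a<b) & (b<c) &
   Ai ~(T[i+a]=T[i+b] & T[i+b]=T[i+c])":
\end{verbatim}
producing a DFA accepting, in base-$2$, exactly the triples $(a,b,c)$ for which the strengthened membership condition of the theorem holds. The proof is then completed by asking \texttt{Walnut} to verify the equivalence
\begin{verbatim}
eval mwequiv "Aa,b,c $mwtriple(a,b,c) <=> $shevcond(a,b,c)":
\end{verbatim}
and observing that it returns \texttt{TRUE}.

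Conceptually, the $(\Leftarrow)$ direction strengthens Theorem~\ref{shev1}: the latter only guarantees that ${\bf t}[i]$ itself is one of ${\bf t}[i+a], {\bf t}[i+a+2^k], {\bf t}[i+a+2^{k+1}]$, whereas here we need both $0$ and $1$ to appear among those three positions. The $(\Rightarrow)$ direction is the substantive content, saying that no other triple achieves this two-colour property. The main obstacle I anticipate is computational cost: the full-triple computation earlier in the section required roughly $4356$ seconds and $18$ GB of memory, and intermediate automata arising during the addition of three shifted copies of $\bf t$ can blow up significantly. However, because the target answer is described by a very simple regular language and the predicate involves strictly fewer disjunctive branches than the one used to compute the full pseudoperiod automaton \texttt{triple}, I expect the minimized intermediate automata to stay modest and the verification to complete much more quickly than the earlier calculation.
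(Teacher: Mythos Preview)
Your approach is correct and is essentially the same \texttt{Walnut}-based verification as the paper's, with one small but worthwhile simplification. The paper defines a predicate \texttt{neqtriple} (logically identical to your \texttt{mwtriple}) and then checks
\[
\triple(a,b,c)\ \wedge\ \neqtriple(a,b,c)\ \Longleftrightarrow\ \shevcond(a,b,c),
\]
thereby reusing the expensive $53$-state automaton \texttt{triple}. You instead check $\texttt{mwtriple}\Leftrightarrow\shevcond$ directly, relying on the observation that over a binary alphabet the ``not all equal'' condition already forces ${\bf t}[i]\in\{{\bf t}[i+a],{\bf t}[i+b],{\bf t}[i+c]\}$, so the conjunction with \texttt{triple} is redundant. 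This is legitimate and, as you anticipated, much cheaper: the paper reports that \texttt{neqtriple} has only $7$ states and builds in under a second, so your equivalence check avoids the $4356$-second computation entirely.
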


\begin{proof}
To assert the claim in first-order logic, we first construct
a formula to show that at least one of the values in $S$ is
not equal to the other two;
this implies that $S$ contains both ${\bf t}[i]$ and $\overline{ {\bf t}[i] }$:
\begin{align*}
\neqtriple(a,b,c) & := (a \geq 1) \land (a<b) \land (a<c) \land  \\
&\forall i\ ({\bf t}[i+a] \neq {\bf t}[i+b] \lor {\bf t}[i+b] \neq 
{\bf t}[i+c] \lor {\bf t}[i+c] \neq {\bf t}[i+a]) .
\end{align*}
Our theorem can then be expressed as follows.
\[ \forall a,b,c \ (\triple(a,b,c) \land \neqtriple(a,b,c))
\iff \shevcond(a,b,c) .\]

Translating the above into \texttt{Walnut}, we build a DFA \texttt{neqtriple}.
\begin{verbatim}
def neqtriple "(a>=1) & (a<b) & (b<c) & 
    Ai (T[i+a]!=T[i+b] | T[i+b]!=T[i+c] | T[i+c]!=T[i+a])":
# returns a DFA with 7 states
# 554 ms
\end{verbatim}
We prove the theorem with the \texttt{Walnut} command below: 
\begin{verbatim}
eval thm2 "Aa,b,c ($triple(a,b,c) & $neqtriple(a,b,c)) <=> 
   $shevcond(a,b,c)":
# returns TRUE
# 6 ms
\end{verbatim}
This returns \texttt{TRUE}, which proves the Theorem.
\end{proof}

We now turn to Shevelev's Propositions 3 and 4 in \cite{Shevelev:2012}.
In our terminology, these are as follows:
\begin{proposition}
For all $k \geq 1$,
the Thue-Morse sequence has 
pseudoperiod $(a,b,c)$ 
if and only if 
it has pseudoperiod
$(2^k a, 2^k b, 2^k c)$.
\end{proposition}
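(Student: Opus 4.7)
The plan is to reduce to the case $k=1$ by induction and then exploit the morphic structure of the Thue-Morse sequence. The essential identities I will use are
\[
{\bf t}[2n] = {\bf t}[n], \qquad {\bf t}[2n+1] = \overline{{\bf t}[n]},
\]
which follow immediately from the defining morphism $\mu(0)=01$, $\mu(1)=10$. Once the $k=1$ equivalence
\[
(a,b,c) \text{ is a pseudoperiod of } {\bf t} \iff (2a,2b,2c) \text{ is a pseudoperiod of } {\bf t}
\]
is established, iterating gives the general statement for any $k \geq 1$.

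For the forward direction of the $k=1$ case, I will assume that $(a,b,c)$ is a pseudoperiod and verify the condition $ {\bf t}[n] \in \{{\bf t}[n+2a], {\bf t}[n+2b], {\bf t}[n+2c]\}$ by splitting on the parity of $n$. If $n=2i$, the morphic identity collapses every entry to ${\bf t}[i]$ or ${\bf t}[i+p]$ for $p \in \{a,b,c\}$, so the condition reduces directly to the hypothesis at $i$. If $n=2i+1$, every entry in the containment becomes the complement of the corresponding value at $i$, so complementing both sides reduces the condition to the hypothesis at $i$ once again.

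For the reverse direction, I assume $(2a,2b,2c)$ is a pseudoperiod and specialize the containment to even indices $n=2i$; the identities show this is exactly the statement that $(a,b,c)$ is a pseudoperiod. Neither direction presents a real obstacle: the argument is little more than bookkeeping with the two morphic identities. The only mildly delicate point is making sure that the odd-index case in the forward direction uses the complement-preserving character of equality, which is why both sides get complemented simultaneously and the containment is preserved.

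As a sanity check, the equivalence could also be verified via \texttt{Walnut} by testing, for any fixed $k$, the formula
\[
\forall a,b,c \ \triple(a,b,c) \iff \triple(2^k a, 2^k b, 2^k c)
\]
against the automaton \texttt{triple} constructed in the preceding proof; however, the morphism-based argument above handles all $k$ uniformly in a few lines and is to be preferred.
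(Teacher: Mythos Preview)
Your argument is correct. The reduction to $k=1$ by induction matches the paper, but for the base case you give a direct proof via the Thue--Morse recurrences ${\bf t}[2n]={\bf t}[n]$ and ${\bf t}[2n+1]=\overline{{\bf t}[n]}$, whereas the paper simply invokes \texttt{Walnut} to verify $\forall a,b,c\ \triple(a,b,c) \iff \triple(2a,2b,2c)$ against the previously computed automaton. Your parity split is clean and the complement step in the odd case is handled correctly, since $x\in S$ is equivalent to $\overline{x}\in\overline{S}$ over a binary alphabet. The advantage of your route is that it is self-contained, explains \emph{why} the equivalence holds, and does not depend on the large \texttt{triple} computation; the paper's route, by contrast, is in keeping with its overall methodology of delegating such verifications to the theorem prover and costs essentially nothing once \texttt{triple} is available.
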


\begin{proof}
We prove the following equivalent statement 
which implies the proposition by induction on $k$: 
\[ \forall a,b,c\ \triple(a,b,c) \iff \triple(2a, 2b, 2c). \]
Translating this into \texttt{Walnut}, we have the following. 
\begin{verbatim}
eval prop3n4 "Aa,b,c $triple(a,b,c) <=> $triple(2*a, 2*b, 2*c)":
# returns TRUE
# 14 ms
\end{verbatim}
This returns \texttt{TRUE}, which proves the proposition.
\end{proof}

\section{Other sequences}
\label{sec4}

After having obtained pseudoperiodicity results for the Thue-Morse
sequence {\bf t}, it is logical to try to obtain
similar results for other famous sequences.

In this section we examine sequences such as
the Rudin-Shapiro sequence {\bf rs},
the variant Thue-Morse sequence {\bf vtm},
the Tribonacci sequence {\bf tr}, and so forth.

For each sequence $s$ in this section, 
we assert $2$-pseudoperiodicity as follows and 
use \texttt{Walnut} to determine whether it holds:
\[
    \exists a,b \ (a \geq 1) \land (a < b) \land 
    \forall i\ (s_{i} \in \{s_{i+a}, s_{i+b}\}).
\]
And we assert $3$-pseudoperiodicity as follows and 
use \texttt{Walnut} to determine whether it holds:
\[
    \exists a,b,c \ (a \geq 1) \land (a < b) \land (b < c) \land 
    \forall i\ (s_{i} \in \{s_{i+a}, s_{i+b}, s_{i+c}\}) .
\]

\subsection{The Mephisto Waltz sequence}
The Mephisto Waltz sequence $\textbf{mw} = 001001110 \cdots$
is defined by the infinite fixed point of the morphism
$0 \to 001$, $1 \to 110$ starting with $0$.
It is sequence \seqnum{A064990} in the OEIS.
\begin{proposition}
    The Mephisto Waltz sequence is $3$-pseudoperiodic, but not $2$-pseudoperiodic.
\end{proposition}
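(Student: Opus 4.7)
The plan is to follow the \texttt{Walnut}-based template established earlier for the Thue-Morse sequence. Since $\mathbf{mw}$ is the fixed point of the $3$-uniform morphism $0\mapsto 001$, $1\mapsto 110$, it is a $3$-automatic sequence, so it can be represented in \texttt{Walnut} as a DFAO (call it \texttt{MW}) on base-$3$ inputs. Both halves of the claim are then decidable by the corollary following Proposition~\ref{proptwo}, applied in base $3$.

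For the negative half, I would evaluate
\begin{verbatim}
eval twopseudomw "?msd_3 Ea,b (a>=1 & a<b) &
   Ai (MW[i]=MW[i+a] | MW[i]=MW[i+b])":
\end{verbatim}
and expect the output \texttt{FALSE}, exactly as in the Thue-Morse case. For the positive half, the analogous three-variable existential query
\begin{verbatim}
eval threepseudomw "?msd_3 Ea,b,c (a>=1 & a<b & b<c) &
   Ai (MW[i]=MW[i+a] | MW[i]=MW[i+b] | MW[i]=MW[i+c])":
\end{verbatim}
should return \texttt{TRUE}. As a cheaper alternative for the positive half, I could guess a small witness triple $(a,b,c)$---noting from Proposition~\ref{prop1}(a) that its maximum entry must be at least $4$, since inspection of $\mu^n(0)$ shows the longest non-initial run of $\mathbf{mw}$ has length exactly $3$---and then verify it with a single universally quantified query, which is much less expensive than the existential one.

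The main obstacle I expect is the computational cost of the three-variable existential query combined with the universal quantifier over $i$: the intermediate automata can blow up in size, as witnessed by the $53$-state triple DFA obtained for the Thue-Morse sequence. If the direct evaluation proves infeasible, the fallback strategy is to pin down a witness triple by searching among small candidates (guided by the block structure of the morphism), verify it cheaply in \texttt{Walnut}, and separately bound the maximum entry of any putative $2$-pseudoperiod via Proposition~\ref{prop1}(a) before ruling out the remaining finitely many pairs $(a,b)$ by inspecting a sufficiently long prefix of $\mathbf{mw}$.
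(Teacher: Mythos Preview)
Your proposal is correct and matches the paper's own proof essentially verbatim: the paper runs exactly the two \texttt{Walnut} queries \texttt{twopseudomw} and \texttt{threepseudomw} in base~$3$ that you wrote down, obtaining \texttt{FALSE} and \texttt{TRUE} respectively. Your anticipated computational concern is justified (the paper reports roughly $2.2 \times 10^6$~ms for the three-variable query), but the direct evaluation does succeed, so your fallback strategy is not needed.
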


\begin{proof}
We translate the assertions of $2$-pseudoperiodicity into \texttt{Walnut} as follows and show that it is false. 
\begin{verbatim}
eval twopseudomw "?msd_3 Ea,b (a>=1 & a<b) & 
    Ai (MW[i]=MW[i+a] | MW[i]=MW[i+b])":
# 496 ms
# return FALSE
\end{verbatim}
We translate the assertions of $3$-pseudoperiodicity into \texttt{Walnut} as follows and show that it is true. 
\begin{verbatim}
eval threepseudomw "?msd_3 Ea,b,c (a>=1 & a<b & b<c) & 
    Ai (MW[i]=MW[i+a] | MW[i]=MW[i+b] | MW[i]=MW[i+c])":
# 2202253 ms
# return TRUE
\end{verbatim}
\end{proof}

Knowing that the Mephisto Waltz sequence is $3$-pseudoperiodic naturally leads to the following problem.
\begin{problem}
Characterize \emph{all} triples $(a,b,c)$ with $1 \leq a<b<c$ that are pseudoperiods
for the Mephisto Waltz sequence.
\end{problem}
We want to characterize the triples 
$(a,b,c)$ such that 
\[\mwtriple(a,b,c) :=
(a \geq 1) \land (a<b) \land (b<c) 
\land \forall i\ {\bf mw}[i] \in \{ {\bf mw}[i+a], {\bf mw}[i+b],
{\bf mw}[i+c]\} .\]

We construct the following DFA \texttt{triplemw} in \texttt{Walnut} 
to solve the problem.
\begin{verbatim}
def triplemw "?msd_3 (a>=1 & a<b & b<c) & 
    Ai (MW[i]=MW[i+a] | MW[i]=MW[i+b] | MW[i]=MW[i+c])":
# returns a DFA with 13 states
# 2331762 ms
\end{verbatim} 
The complete answer to this problem is 
the set of triples accepted by our DFA \texttt{triplemw}.

\subsection{The ternary Thue-Morse sequence}
The ternary Thue-Morse sequence $\textbf{vtm} = 210201 \cdots$
is defined by the infinite fixed point of the morphism 
$2 \to 210$, $1 \to 20$, and $0 \to 1$ starting with 2. 
It is sequence \seqnum{A036577} in the OEIS.
\begin{proposition}
    The ternary (variant) Thue-Morse sequence is $3$-pseudoperiodic, 
    but not $2$-pseudoperiodic.
\end{proposition}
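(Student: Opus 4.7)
The plan is to mirror exactly the method already used for the Thue-Morse and Mephisto Waltz sequences. Since $\textbf{vtm}$ is automatic (being the fixed point of a non-uniform morphism, it is recognized by a Dumont-Thomas automaton over an appropriate numeration system that \texttt{Walnut} supports), we can express pseudoperiodicity of bounded size as a first-order sentence over $(\mathbb{N},+,\textbf{vtm})$ and let the theorem-prover decide it. After loading an automatic description of $\textbf{vtm}$ into \texttt{Walnut}, the two halves of the proposition reduce to evaluating one existential sentence each.

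For the negative half, I would assert
\[
\exists a,b\ (a\geq 1) \wedge (a<b) \wedge \forall i\ (\textbf{vtm}[i]=\textbf{vtm}[i+a] \lor \textbf{vtm}[i]=\textbf{vtm}[i+b]),
\]
expecting \texttt{Walnut} to return \texttt{FALSE}. For the positive half, I would assert the analogous sentence with three variables $a<b<c$ and expect \texttt{Walnut} to return \texttt{TRUE}. The \texttt{Walnut} code is a direct transcription of what appears for $\textbf{t}$ and $\textbf{mw}$, with \texttt{VTM} replacing \texttt{T} or \texttt{MW} and with the \texttt{?msd\_...} prefix set to the numeration system under which $\textbf{vtm}$ is automatic. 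In the 2-pseudoperiodicity case, a \texttt{FALSE} answer is in principle accompanied by a concrete counterexample: for every candidate pair $(a,b)$, \texttt{Walnut} can exhibit an $i$ where $\textbf{vtm}[i]$ agrees with neither $\textbf{vtm}[i+a]$ nor $\textbf{vtm}[i+b]$, which doubles as a human-checkable certificate.

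The only real obstacle I foresee is computational, not conceptual. The construction of the automaton encoding the 3-variable predicate involves quantifying over three unknowns and universally over $i$, and each quantifier alternation can cause a blow-up in state count; the Mephisto Waltz computation above took roughly $2200$ seconds, and the analogous computation for $\textbf{vtm}$ is likely to be comparable or larger because of the less regular structure of its underlying numeration. If the naive formulation runs out of memory, the fallback is to break the construction into smaller intermediate predicates (say, first defining $\vtmtriple(a,b,c)$ as its own \texttt{def}, then existentially quantifying at the outer level), which tends to keep the intermediate automata tractable. Once both \texttt{Walnut} calls return as predicted, the proposition is established, and the resulting DFA for the $3$-variable predicate can be reused (just as \texttt{triplemw} is reused after Problem~\ref{problem} in the preceding subsection) to characterize all size-$3$ pseudoperiods of $\textbf{vtm}$ for free.
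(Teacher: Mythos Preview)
Your proposal is correct and matches the paper's approach exactly: the paper runs the same two \texttt{Walnut} queries (without any \texttt{?msd\_...} prefix, since $\textbf{vtm}$ is in fact $2$-automatic and built into \texttt{Walnut} over base~$2$, not via a Dumont--Thomas numeration as you surmised) and obtains \texttt{FALSE} and \texttt{TRUE} respectively. Your computational worry is well-founded---the paper reports that the $3$-pseudoperiodicity check required roughly $5\times 10^8$\,ms and $188$\,GB of memory.
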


\begin{proof}
We translate the assertions of $2$-pseudoperiodicity into \texttt{Walnut} as follows and show that it is false. 
\begin{verbatim}
eval twopseudovtm "Ea,b (a>=1 & a<b) & 
   Ai (VTM[i]=VTM[i+a] | VTM[i]=VTM[i+b])":
# 235 ms
# return FALSE
\end{verbatim}
We translate the assertions of $3$-pseudoperiodicity into \texttt{Walnut} as follows and show that it is true. 
\begin{verbatim}
eval threepseudovtm "Ea,b,c (a>=1 & a<b & b<c) & 
    Ai (VTM[i]=VTM[i+a] | VTM[i]=VTM[i+b] | VTM[i]=VTM[i+c])":
# 505315560 ms
# 188 GB
# return TRUE
\end{verbatim}
\end{proof}

Knowing that the ternary Thue-Morse sequence is $3$-pseudoperiodic naturally leads to the following problem.
\begin{problem}
Characterize \emph{all} triples $(a,b,c)$ with $1 \leq a<b<c$ that are pseudoperiods
for the ternary Thue-Morse sequence.
\end{problem}
We want to characterize the triples $(a,b,c)$ such that 
\[\vtmtriple(a,b,c) :=
(a \geq 1) \land (a<b) \land (b<c) 
\land \forall i\ ( {\bf vtm}[i] \in \{
{\bf vtm}[i+a], {\bf vtm}[i+b], {\bf vtm}[i+c]\}) .\]

We construct the following DFA \texttt{triplevtm} in \texttt{Walnut} 
to solve the problem.
\begin{verbatim}
def triplevtm "(a>=1 & a<b & b<c) & 
    Ai (VTM[i]=VTM[i+a] | VTM[i]=VTM[i+b] | VTM[i]=VTM[i+c])":
# returns a DFA with 12 states
# 815830898 ms
\end{verbatim} 
The complete answer to this problem is  the set of triples accepted
by our DFA \texttt{triplevtm}.  It is depicted in Figure~\ref{fig6}.
Again, this was a very large computation with \texttt{Walnut}.
\begin{figure}[H]
\begin{center}
    \includegraphics[width=5.5in]{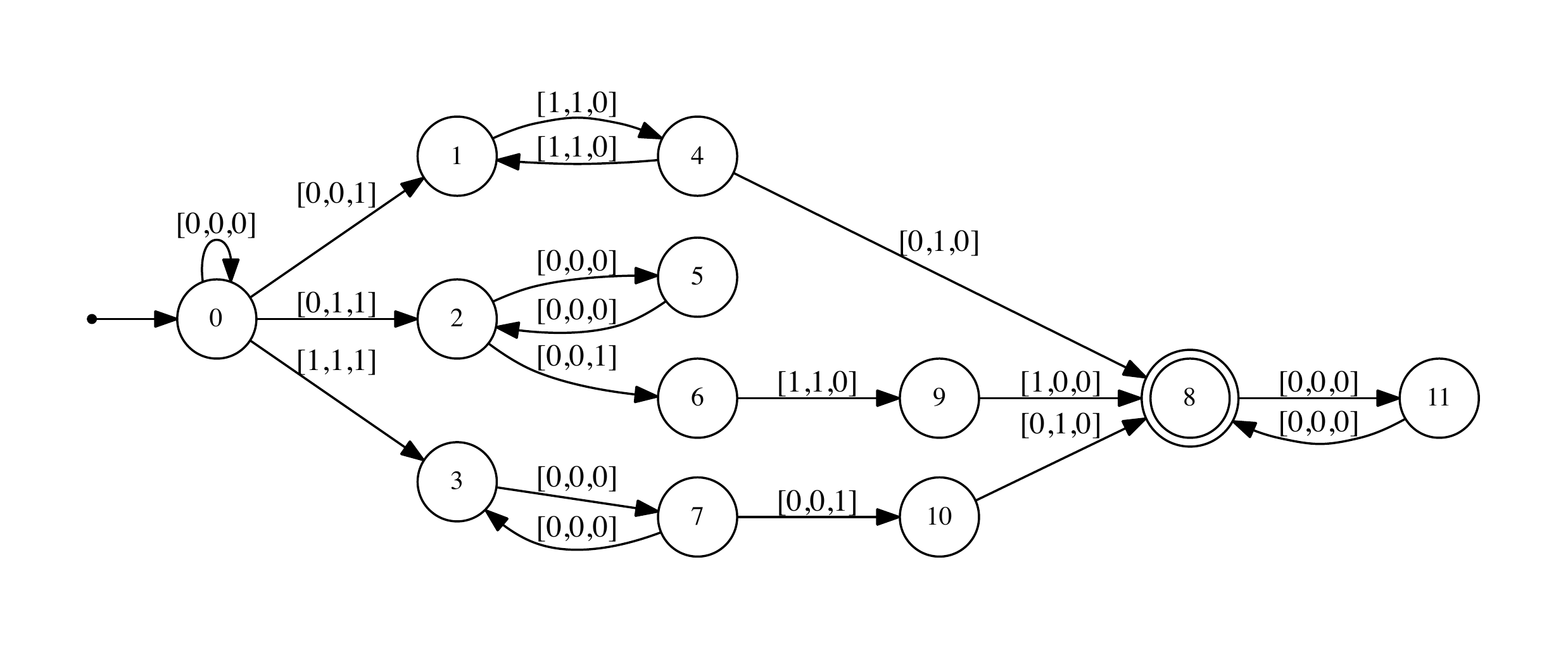}
\end{center}
\caption{Automaton recognizing all pseudoperiods of size $3$ for the {\bf vtm} sequence.}
\label{fig6}
\end{figure}

By looking at the acceptance paths of Figure~\ref{fig6}, we can deduce the following result.
\begin{theorem}
The only $3$-pseudoperiods for the {\bf vtm} sequence are
\begin{itemize}
    \item $\{ ( (2^{2i+1} - 1)2^{2j+1}, (2^{2i+2} - 1) 2^{2j}, 2^{2i+2j+2}) \suchthat i, j \geq 0 \}$
    \item $\{ (3\cdot 2^{2j}, (2^{2i+2} + 1)2^{2j+1}, (2^{2i+1} + 1)2^{2j+2} ) \suchthat i,j \geq 0 \}$
    \item $\{ (2^{2i+2j+3}, (2^{2i+3} + 1) 2^{2j},
    (2^{2i+2}+1)2^{2j+1}) \suchthat i, j \geq 0 \}$.
\end{itemize}
\end{theorem}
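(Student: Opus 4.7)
The theorem asserts that the language accepted by the 12-state automaton \texttt{triplevtm} (Figure~\ref{fig6}) coincides with the union of three parametric families indexed by $(i,j)\in\mathbb{N}^2$. Since an explicit DFA for the set of all $3$-pseudoperiods is already available, the natural strategy is to encode each candidate family as a first-order predicate over $\mathbb{N}$ and then invoke \texttt{Walnut} to check equivalence with \texttt{triplevtm}. This keeps the proof in the same computer-verified style used throughout Sections~\ref{sec3} and~\ref{sec4}.

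The first step is to express the building blocks $2^{2k}$, $2^{2k+1}$, $2^{2k}-1$, $2^{2k+1}-1$, $2^{2k}+1$, and $3\cdot 2^{2k}$ in base~$2$. Powers of $4$ form the regular language $0^*1(00)^*$, so I would introduce \texttt{reg pow4 msd\_2 "0*1(00)*"} and analogously \texttt{reg pow4odd msd\_2 "0*10(00)*"} for $2\cdot 4^k = 2^{2k+1}$. Values of the form $2^{2k+2}-1$ and $2^{2k+1}-1$ are recognized by $0^*(11)(11)^*$ and $0^*1(11)^*$ respectively; the shifted quantities $(2^{2i+2}-1)2^{2j}$ etc.\ can then be obtained by appending zeros, i.e.\ through \texttt{Walnut} multiplication by \texttt{pow4} or \texttt{pow4odd}. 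With these gadgets in hand I can define, for each of the three families, a predicate $\texttt{famA}(a,b,c)$, $\texttt{famB}(a,b,c)$, $\texttt{famC}(a,b,c)$ of the shape
\[
\exists X, Y\ \texttt{pow4}(X) \wedge \texttt{pow4}(Y) \wedge a = \varphi_1(X,Y) \wedge b = \varphi_2(X,Y) \wedge c = \varphi_3(X,Y),
\]
with the appropriate linear expressions $\varphi_i$ (using \texttt{pow4odd} in place of \texttt{pow4} where an odd exponent is needed).

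The second step is the equivalence check: set
\[
\texttt{claimed}(a,b,c) := \texttt{famA}(a,b,c) \lor \texttt{famB}(a,b,c) \lor \texttt{famC}(a,b,c)
\]
and verify with \texttt{Walnut} that $\forall a,b,c\ \texttt{triplevtm}(a,b,c) \Leftrightarrow \texttt{claimed}(a,b,c)$. Since \texttt{triplevtm} is already computed, this reduces to a standard automata equivalence test on a DFA over the $3$-track base-$2$ alphabet $\{0,1\}^3$.

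The principal obstacle is not the verification itself (which is routine once the predicates are built), but extracting the correct family descriptions from the $12$-state automaton in the first place. I would proceed by decomposing \texttt{triplevtm} into its strongly connected components, enumerating simple accepting paths, and reading off the shape of the three tracks along each cycle; the pattern of even/odd runs of zeros forced on each coordinate by the cycles is exactly what dictates the parity distinctions $2i+1$ versus $2i+2$ appearing in the statement. Once that combinatorial reading is done correctly, the \texttt{Walnut} check either succeeds and closes the proof, or returns a counterexample triple that pinpoints a missing or extraneous case in the candidate list, which can then be corrected and rechecked.
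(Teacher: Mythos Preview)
Your plan is sound and would close the proof, but it differs from the paper's argument. The paper works directly from the $12$-state DFA: it observes that there are only three essentially distinct label sequences along accepting paths, writes each as a regular expression over the alphabet $\{0,1\}^3$ (for instance $[0,0,0]^*[0,0,1][1,1,0]([1,1,0][1,1,0])^*[0,1,0]([0,0,0][0,0,0])^*$ for the first family), and then reads each of the three coordinates of such an expression as a base-$2$ integer to recover the parametric formulas. Thus the paper goes automaton $\to$ regex $\to$ formula by hand, whereas you propose formula $\to$ automaton $\to$ mechanical equivalence check. Your route is more robust against a miscounted path; the paper's needs no further \texttt{Walnut} computation once the DFA is drawn.

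One point to tighten: the predicate template you write, $\exists X,Y\ \texttt{pow4}(X)\wedge\texttt{pow4}(Y)\wedge a=\varphi_1(X,Y)\wedge\cdots$, implicitly contains products such as $X\cdot Y$, which \texttt{Walnut} will reject since multiplication of two quantified variables is not Presburger-definable. Your parenthetical about ``appending zeros'' is the right repair: either encode each family directly as a \texttt{reg} predicate over the three-track alphabet (these regexes are exactly what the paper's path enumeration produces), or introduce an auxiliary automatic relation ``$y$ equals $x$ shifted left by an even number of bits'' and route all the power-of-$4$ multiplications through it. With that adjustment the equivalence test goes through as you describe.
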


\begin{proof}
There are only essentially three possible paths to the accepting state labeled 8 in Figure~\ref{fig6}.  They are labeled 
\begin{itemize}
\item $[0,0,0]^*[0,0,1][1,1,0]([1,1,0][1,1,0])^* [0,1,0]([0,0,0][0,0,0])^*$
\item $[0,0,0]^* [0,1,1]([0,0,0][0,0,0])^* [0,0,1][1,1,0][1,0,0] ([0,0,0][0,0,0])^*$
\item $[0,0,0]^* [1,1,1] [0,0,0] ([0,0,0][0,0,0])^* [0,0,1][0,1,0] ([0,0,0][0,0,0])^*$
\end{itemize}
By considering the base-$2$ numbers specified by each coordinate, we obtain the theorem.
\end{proof}

\subsection{The period-doubling sequence}
The period-doubling sequence $\textbf{pd} = 1011101011 \cdots$ is defined
by the infinite fixed point of the morphism $1 \to 10$, $0 \to 11$ starting with 1.
It is sequence \seqnum{A035263} in the OEIS. 
\begin{proposition}
    The period-doubling sequence is $3$-pseudoperiodic, but not $2$-pseudoperiodic.
\end{proposition}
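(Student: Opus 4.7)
The plan is to apply, with no new ideas, the \texttt{Walnut}-based pattern that the paper has already executed for $\textbf{t}$, $\textbf{mw}$, and $\textbf{vtm}$. Since $\textbf{pd}$ is the fixed point of a $2$-uniform morphism, it is $2$-automatic, so the standard DFAO \texttt{PD} that reads the base-$2$ representation of $n$ and outputs ${\bf pd}[n]$ is available in \texttt{Walnut}, and any first-order sentence over $\langle \mathbb{N},+,\textbf{pd}\rangle$ is decidable.

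First I would rule out $2$-pseudoperiodicity by asking \texttt{Walnut} to decide
\[
\exists a,b \ (a \geq 1) \land (a < b) \land \forall i\ ({\bf pd}[i] = {\bf pd}[i+a] \lor {\bf pd}[i] = {\bf pd}[i+b]),
\]
encoded as
\begin{verbatim}
eval twopseudopd "Ea,b (a>=1 & a<b) &
    Ai (PD[i]=PD[i+a] | PD[i]=PD[i+b])":
\end{verbatim}
and verifying the verdict \texttt{FALSE}. Next I would establish $3$-pseudoperiodicity by deciding
\[
\exists a,b,c \ (a \geq 1) \land (a < b) \land (b < c) \land \forall i\ ({\bf pd}[i] \in \{{\bf pd}[i+a],\,{\bf pd}[i+b],\,{\bf pd}[i+c]\}),
\]
encoded as
\begin{verbatim}
eval threepseudopd "Ea,b,c (a>=1 & a<b & b<c) &
    Ai (PD[i]=PD[i+a] | PD[i]=PD[i+b] | PD[i]=PD[i+c])":
\end{verbatim}
and confirming the verdict \texttt{TRUE}.

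The principal obstacle I foresee is not correctness but resource consumption for the second query. Three leading existentials over positions followed by a universal over $i$ is exactly the shape that blew up to roughly $188$~GB and several days of CPU time for $\textbf{vtm}$. Fortunately, $\textbf{pd}$ has an unusually small underlying DFAO, and it is essentially the image of $\textbf{t}$ under an elementary transformation, so I would expect the intermediate automata here to behave much more like those for $\textbf{t}$ or $\textbf{mw}$ than those for $\textbf{vtm}$, finishing in seconds or minutes. If the fully existential triple nevertheless proved too costly, I would fall back on the standard strategy used throughout the paper: inspect a long prefix of $\textbf{pd}$ to guess an explicit candidate triple $(a,b,c)$ (the Thue-Morse evidence suggests trying triples of the form $(a, a+2^k, a+2^{k+1})$ first), and verify it via the much cheaper purely universal \texttt{Walnut} sentence, which combined with the already-proved non-$2$-pseudoperiodicity would finish the argument.
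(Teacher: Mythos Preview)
Your proposal is correct and essentially identical to the paper's own proof: the two \texttt{Walnut} commands you wrote are verbatim the ones the paper uses, and your expectation about resources is borne out (the paper reports $424$\,ms for the $2$-pseudoperiodicity query and only $40$\,ms for the $3$-pseudoperiodicity query), so your fallback strategy is unnecessary.
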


\begin{proof}
We translate the assertions of $2$-pseudoperiodicity into \texttt{Walnut} as follows and show that it is false. 
\begin{verbatim}
eval twopseudopd "Ea,b (a>=1 & a<b) & 
    Ai (PD[i]=PD[i+a] | PD[i]=PD[i+b])":
# 424 ms
# return FALSE
\end{verbatim}
We translate the assertions of $3$-pseudoperiodicity into \texttt{Walnut} as follows and show that it is true. 
\begin{verbatim}
eval threepseudopd "Ea,b,c (a>=1 & a<b & b<c) & 
    Ai (PD[i]=PD[i+a] | PD[i]=PD[i+b] | PD[i]=PD[i+c])":
# 40 ms
# return TRUE
\end{verbatim}
\end{proof}

Knowing that the period-doubling sequence is $3$-pseudoperiodic naturally leads to the following problem. 
\begin{problem}
Characterize \emph{all} triples $(a,b,c)$ with $1 \leq a<b<c$ that are pseudoperiods for the period-doubling sequence.
\end{problem}
We want to characterize the triples 
$(a,b,c)$ such that 
\[\pdtriple(a,b,c) :=
(a \geq 1) \land (a<b) \land (b<c) 
\land \forall i\ ( {\bf pd}[i] \in \{ {\bf pd}[i+a], 
{\bf pd}[i+b], {\bf pd}[i+c]\}) .\]

We construct the following
DFA \texttt{triplepd} in \texttt{Walnut} 
to solve the problem.
\begin{verbatim}
def triplepd "(a>=1 & a<b & b<c) & 
    Ai (PD[i]=PD[i+a] | PD[i]=PD[i+b] | PD[i]=PD[i+c])":
# returns a DFA with 28 states
# 30 ms
\end{verbatim} 
The complete answer to this problem is 
the set of triples accepted by our DFA \texttt{triplepd}.

\subsection{The Rudin-Shapiro sequence}
The Rudin-Shapiro sequence $\textbf{r} = 00010010 \cdots$
is defined by the relation $\textbf{r}[n] = |(n)_2|_{11}$ mod 2, that is,
the number of occurrences of $11$, computed modulo $2$,
in the base-2 representation of $n$.
It is sequence \seqnum{A020987} in the OEIS. 

\begin{theorem}
The Rudin-Shapiro sequence is $4$-pseudoperiodic, but
not $3$-pseudoperiodic.
\end{theorem}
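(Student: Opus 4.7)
The proof follows the template developed for the Thue-Morse, Mephisto Waltz, and period-doubling sequences earlier in this section: since ${\bf r}$ is $2$-automatic, both halves of the theorem translate into first-order sentences that Walnut can decide.

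For the negative half, the plan is to feed Walnut the direct translation of the definition of $3$-pseudoperiodicity, namely
\[
\exists a,b,c \ (1 \le a) \land (a < b) \land (b < c) \land \forall i\ \bigl({\bf r}[i] = {\bf r}[i+a] \lor {\bf r}[i] = {\bf r}[i+b] \lor {\bf r}[i] = {\bf r}[i+c]\bigr),
\]
and observe that the evaluation returns \texttt{FALSE}. This is the direct analogue of the command \texttt{twopseudotm} used to refute $2$-pseudoperiodicity of the Thue-Morse word, only with one extra existential variable.

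For the positive half, the cleanest route is via Proposition~\ref{prop1}(b). A standard fact about the Rudin-Shapiro sequence is that it contains no four consecutive identical letters; equivalently, its non-initial runs have length at most $3$. This fact is itself a first-order statement that Walnut verifies almost instantly by checking that the formula $\exists i\ {\bf r}[i] = {\bf r}[i+1] \land {\bf r}[i+1] = {\bf r}[i+2] \land {\bf r}[i+2] = {\bf r}[i+3]$ evaluates to \texttt{FALSE}. Proposition~\ref{prop1}(b) then delivers $(1,2,3,4)$ as a pseudoperiod, which establishes $4$-pseudoperiodicity. As a sanity check one could also run Walnut directly on the existential formula
\[
\exists a,b,c,d\ (1 \le a < b < c < d) \land \forall i\ \bigl({\bf r}[i] \in \{{\bf r}[i+a], {\bf r}[i+b], {\bf r}[i+c], {\bf r}[i+d]\}\bigr),
\]
but this is a far more expensive computation and is not needed.

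The main obstacle is expected to be the negative statement rather than the positive one. Eliminating three nested existentials $a,b,c$ together with the universal quantifier over $i$ forces Walnut to determinize a sizeable product construction built from several shifted copies of the Rudin-Shapiro DFA, and experience with the analogous \texttt{triple} computation for the Thue-Morse sequence earlier in the paper (which consumed 4356 seconds and 18 GB) suggests this is where the cost lives. No new mathematical ingredient beyond run-length bound of Proposition~\ref{prop1}(b) and the decidability machinery already invoked for the other automatic sequences in this section is needed; the work is in confirming that the required Walnut computations terminate within the available resources.
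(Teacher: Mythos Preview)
Your treatment of the negative half matches the paper exactly: feed Walnut the existential $3$-pseudoperiodicity formula and record that it returns \texttt{FALSE}. You are also right that this is the costly step; the paper reports over $5.5$ hours and more than $200$~GB of memory for that single command.

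The positive half, however, contains a factual error that breaks the argument. The Rudin-Shapiro sequence \emph{does} contain four consecutive identical letters: for instance ${\bf r}[7..10] = 0000$ (since $7=(111)_2$ has two occurrences of $11$, while $8,9,10$ have none). Consequently the Walnut check you propose for ``no run of length $4$'' would return \texttt{TRUE}, not \texttt{FALSE}, and Proposition~\ref{prop1}(b) only yields the $5$-element pseudoperiod $(1,2,3,4,5)$. In fact $(1,2,3,4)$ is \emph{not} a pseudoperiod of ${\bf r}$: at index $6$ we have ${\bf r}[6]=1$ while ${\bf r}[7]={\bf r}[8]={\bf r}[9]={\bf r}[10]=0$, so the pseudoperiod condition fails there.

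The paper sidesteps this by exhibiting a specific $4$-tuple, namely $(2,3,4,5)$, and letting Walnut confirm directly that it is a pseudoperiod. (At index $6$ this works because ${\bf r}[11]=1$.) So the fix is simply to replace your run-length argument by a one-line Walnut verification of a concrete $4$-tuple such as $(2,3,4,5)$; no appeal to Proposition~\ref{prop1} is needed or, as it turns out, sufficient.
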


\begin{proof}
To check $3$-pseudoperiodicity, we used the \texttt{Walnut} command
\begin{verbatim}
eval rudinpseudo "Ea,b,c a>=1 & a<b & b<c & 
   An (RS[n]=RS[n+a]|RS[n]=RS[n+b]|RS[n]=RS[n+c])":
\end{verbatim}
which returned the result \texttt{FALSE}.  This was
a big computation, requiring 20003988ms and more
than 200 GB of memory on a 64-bit machine.

 It is $4$-pseudoperiodic, as \texttt{Walnut} can easily verify
   that $(2,3,4,5)$ is a pseudoperiod.
\end{proof}

\subsection{The Tribonacci sequence}
The Tribonacci sequence 
is a generalization of the Fibonacci sequence.
It is defined by the infinite fixed point of the morphism 
$0 \to 01$, $1 \to 02$, and $2 \to 0$
and is sequence \seqnum{A080843} in the OEIS.

\begin{theorem}
The Tribonacci sequence is $3$-pseudoperiodic, but
not $2$-pseudoperiodic.
\end{theorem}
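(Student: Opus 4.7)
The strategy mirrors the other cases in this section: since the Tribonacci sequence is automatic in the Tribonacci numeration system (\texttt{?msd\_trib}), we can encode both claims as first-order statements over $(\mathbb{N},+,V_T,\mathbf{tr})$ and dispatch them to \texttt{Walnut}, following exactly the template used for \textbf{mw}, \textbf{vtm}, and \textbf{pd}. The overall plan is: (i) assert non-$2$-pseudoperiodicity as a (negated) existential and let \texttt{Walnut} evaluate it; (ii) either assert the analogous existential for $3$-pseudoperiodicity, or — if that computation is too costly — exhibit an explicit candidate triple $(p_1,p_2,p_3)$ and verify the corresponding universal formula, which is much cheaper.

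For the negative half, I would first note that the general bound from the theorem on alphabet size only forces $\max\{p_1,p_2\}\ge 3$ (since the Tribonacci alphabet is $\{0,1,2\}$), so it does \emph{not} by itself rule out $2$-pseudoperiodicity; a genuine computation is required. The \texttt{Walnut} command would be essentially
\begin{verbatim}
eval twopseudotr "?msd_trib Ea,b (a>=1 & a<b) &
    Ai (TR[i]=TR[i+a] | TR[i]=TR[i+b])":
\end{verbatim}
and the expected output is \texttt{FALSE}. The soundness is immediate from the Bruy\`ere--Hansel--Michaux--Villemaire framework used throughout Section~\ref{sec2}.

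For the positive half, the cheapest route is to guess a witness. By inspecting a prefix of $\mathbf{tr}$ one sees that the letters $0,1$ occur with very small gaps while the letter $2$ is sparser, with gaps bounded by a small constant (empirically, of size $6$ or $7$ in the prefix). This suggests looking for a pseudoperiod whose largest coordinate is at least this maximum gap; one then tests a short list of candidate triples by the universal formula
\[
\forall i\ \bigl(\mathbf{tr}[i]=\mathbf{tr}[i+p_1]\ \lor\ \mathbf{tr}[i]=\mathbf{tr}[i+p_2]\ \lor\ \mathbf{tr}[i]=\mathbf{tr}[i+p_3]\bigr).
\]
Each such test is a single universal quantifier over $\mathbb{N}$ with fixed numeric parameters, so it is very fast. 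If no small triple works, fall back to the full existential
\begin{verbatim}
eval threepseudotr "?msd_trib Ea,b,c (a>=1 & a<b & b<c) &
    Ai (TR[i]=TR[i+a] | TR[i]=TR[i+b] | TR[i]=TR[i+c])":
\end{verbatim}
which is guaranteed to terminate with \texttt{TRUE} or \texttt{FALSE} by decidability.

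The main obstacle I anticipate is computational. As the \textbf{vtm} and \textbf{rs} computations in this section illustrate, the intermediate automata produced when three free numeric variables are quantified can be enormous, especially in the Tribonacci numeration where addition is already expensive to represent. The existential version for $3$-pseudoperiodicity is the likely bottleneck, so the practical plan is to front-load the work by searching manually for a small witness triple and only invoke the full existential as a backstop; this keeps both halves of the theorem within reach.
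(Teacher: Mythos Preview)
Your plan is correct and, for the positive half, is essentially what the paper does: exhibit a single witness triple and verify the universal formula. The paper is simply more concrete: it names the pseudoperiod $(4,6,7)$ and observes that checking all length-$8$ factors of $\mathbf{tr}$ suffices (this is the ``trivial observation'' remark from Section~\ref{sec2}), so no heavy \texttt{Walnut} run is needed at all. Your search heuristic would land on this or a nearby triple quickly.

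One point worth noting: the paper's printed proof actually addresses only the $3$-pseudoperiodicity claim and is silent on the ``not $2$-pseudoperiodic'' half. Your proposed \texttt{Walnut} command for that half is exactly the right way to fill the gap, and it is the same template used for $\mathbf{t}$, $\mathbf{mw}$, $\mathbf{vtm}$, and $\mathbf{pd}$ elsewhere in the paper. So in that respect your write-up is more complete than the paper's own proof.
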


\begin{proof}
It has pseudoperiod $(4,6,7)$, as can be easily verified by checking all factors of length $8$ (or with \texttt{Walnut}).
\end{proof}

\begin{openproblem}
Characterize all the $3$-pseudoperiods of the
Tribonacci sequence.
\label{open1}
\end{openproblem}

Although this is in principle doable with \texttt{Walnut}, 
so far, this seems to be beyond our computational abilities, requiring the
determinization of a large nondeterministic automaton.

\subsection{The paperfolding sequences}

The paperfolding sequences are an uncountable family of sequences originally 
introduced by \cite{Davis&Knuth:1970} and later studied 
by \cite{Dekking&MendesFrance&vanderPoorten:1982}.   The first-order theory of the paperfolding sequences was proved decidable in
\cite{Goc&Mousavi&Schaeffer&Shallit:2015}.   Every infinite paperfolding sequence is specified by an infinite sequence $\bf f$ of unfolding instructions.   Since \texttt{Walnut}'s automata work on finite strings---they are not B\"uchi automata---we have to approximate an infinite $\bf f$
by considering its finite prefixes $f$.  A fuller discussion of exactly how to do this can be
found in \cite[Chap.~12]{Shallit:2022}; we just sketch
the ideas here.

We can use \texttt{Walnut} to determine the pseudoperiods of any specific paperfolding sequence, or the pseudoperiod common to all paperfolding sequences.  

\texttt{Walnut} can prove that no paperfolding sequence is $2$-pseudoperiodic, as follows:
\begin{verbatim}
reg linkf {-1,0,1} {0,1} "()*[0,1][0,0]*":
def pffactoreq "?lsd_2 At (t<n) => FOLD[f][i+t]=FOLD[f][j+t]":
eval paper_pseudo2 "?lsd_2 Ef,a,b,x 1<=a & a<b & $linkf(f,x) & 
   x>=2*b+3 & Ai (i>=1 & i+b+1<=x) => 
   ($pffactoreq(f,i,i+a,1)|$pffactoreq(f,i,i+b,1))":
# FALSE, 26926 secs
\end{verbatim}
Here \texttt{pffactoreq} asserts that the two length-$n$
factors of the paperfolding sequence specified by a finite code
$f$, one beginning at position $i$ and one at position $j$ are the same.  And 
\texttt{linkf} asserts
that $x = 2^{|f|}$.   The assertion \texttt{paper\_pseudo2} is that there exists some paperfolding sequence and numbers $a,b$ such that every position $i$ has a symbol
equal to either the symbol at position $i+a$ or
$i+b$.   

All paperfolding sequences are $3$-pseudoperiodic; 
for example, $(1,3,4)$ is a pseudoperiod of all paperfolding sequences.
 \begin{verbatim}
eval paper_pseudo134 "?lsd_2 Af,x,i ($linkf(f,x) & i>=1 & i+5<=x) => 
   ($pffactoreq(f,i,i+1,1)|$pffactoreq(f,i,i+3,1)|
   $pffactoreq(f,i,i+4,1))":
\end{verbatim}
However, not all pseudoperiods work for all paperfolding sequences.
For example, we can use \texttt{Walnut} to show that $(1,2,16)$ is a pseudoperiod for the paperfolding sequence specified by the unfolding instructions $\overline{1} \, 1 \, 1 \, \cdots$, but
not a pseudoperiod for the regular paperfolding sequence (specified by $1 \, 1 \, 1 \, \cdots$).   

We can compute the pseudoperiods that work for all paperfolding sequences simultaneously, using
the following \texttt{Walnut} code:
\begin{verbatim}
def paper_pseudo3 "?lsd_2 1<=a & a<b & b<c & 
   Af,x,i ($linkf(f,x) & i>=1 & i+c+1<=x) => 
   ($pffactoreq(f,i,i+a,1)|$pffactoreq(f,i,i+b,1)|$pffactoreq(f,i,i+c,1))":
# 10 states, 2356 ms
\end{verbatim}

The automaton in Figure~\ref{paperfig}
accepts the base-$2$ representation (here, \emph{least significant digit first}) of those triples $(a,b,c)$ with
$1 \leq a < b < c$ as a pseudoperiod for all paperfolding sequences.
\begin{figure}[H]
\begin{center}
    \includegraphics[width=5.5in]{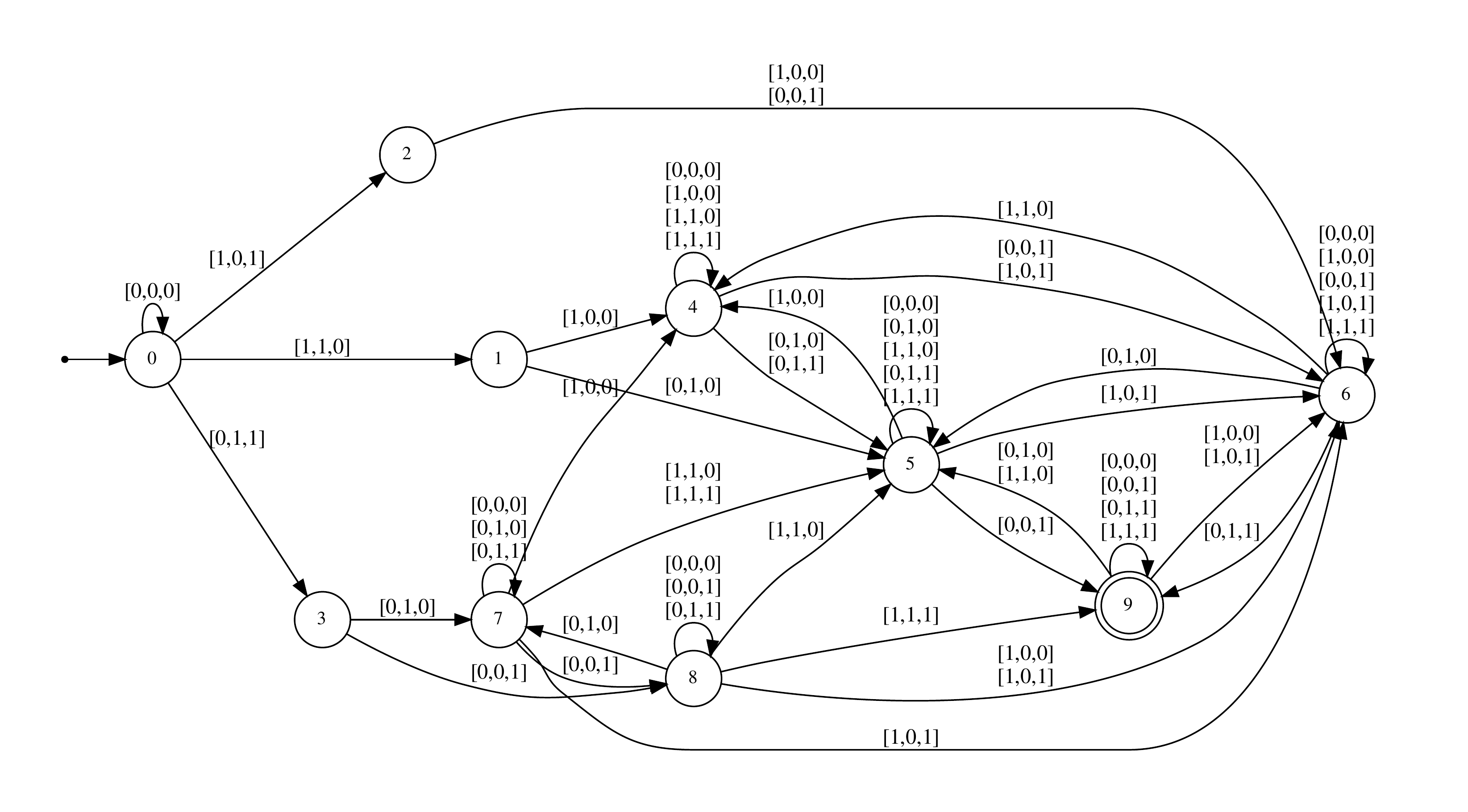}
\end{center}
\caption{Automaton accepting base-$2$ representations of pseudoperiod triples common to all paperfolding sequences.}
\label{paperfig}
\end{figure}

\section{Critical exponents}
\label{sec5}

In this section we consider the following problem.
Suppose we consider the class $C_{a,b}$ of all infinite binary words
with a specified pseudoperiod $(a,b)$, for integers $1 \leq a < b$. 
Can we construct words of small critical exponent in $C_{a,b}$?
And what is the repetition threshold of $C_{a,b}$?

The general strategy we employ is the following.
We use a heuristic search procedure to try to guess a morphism $h_{a,b}$ such that
either $h_{a,b}({\bf t})$ or $h_{a,b}({\bf vtm})$ has pseudoperiod $(a,b)$
and avoids $e^+$ powers for some suitable exponent $e$.
Once such an $h_{a,b}$ is found, we can verify its correctness using 
\texttt{Walnut}.   
Simultaneously we can do a breadth-first search over the tree of
all binary words having pseudoperiod $(a,b)$ and avoiding $e$-powers.
If this tree turns out to be finite, we have proved the optimality of this $e$.

Our first result shows that this critical exponent can never be $\leq 7/3$.

\begin{theorem}
If $\bf x$ is an infinite binary word that is $2$-pseudoperiodic,
then $x$ contains a $(7/3)$-power.   
\end{theorem}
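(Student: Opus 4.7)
The plan is to proceed by contradiction: assume $\mathbf{x}$ is an infinite binary word with pseudoperiod $(a,b)$, $1 \leq a < b$, that avoids all $(7/3)$-powers, and derive a contradiction.

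First I would extract a run-length bound. Any factor $\alpha\alpha\alpha$ (a single letter cubed) is a $3$-power with exponent $3 > 7/3$, so $(7/3)$-freeness forces every run of $\mathbf{x}$ to have length at most $2$. This is consistent with Proposition~\ref{prop1} but gives the much tighter bound needed below.

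Next, I would set $A = \{i \in \mathbb{N} : x_i = x_{i+a}\}$ and $B = \{i \in \mathbb{N} : x_i = x_{i+b}\}$. Pseudoperiodicity gives $A \cup B = \mathbb{N}$. On a maximal interval $I \subseteq A$ of length $L$, the word $\mathbf{x}$ has period $a$ on an extended interval of length $L+a$; if $L+a \geq 7a/3$, i.e.\ $L \geq 4a/3$, this extended interval is a $(7/3)$-power, contradicting our assumption. Hence every maximal $A$-interval has length strictly less than $4a/3$, and symmetrically every maximal $B$-interval has length strictly less than $4b/3$. Thus $\mathbb{N}$ is covered by overlapping short intervals on each of which $\mathbf{x}$ has period $a$ or $b$.

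The crux, and the main obstacle, is to turn these short local period constraints into a global contradiction. The natural idea is to exhibit two overlapping intervals carrying periods $a$ and $b$ respectively, with overlap of length at least $a+b-\gcd(a,b)$, and apply Fine--Wilf to obtain a factor of period $\gcd(a,b)$, iterating down to a period-$1$ factor (a run) of length at least $3$, contradicting Step~1. The delicacy is that the upper bounds $4a/3$ and $4b/3$ on maximal $A$- and $B$-intervals can be too close to $a$ and $b$ themselves, so Fine--Wilf does not always apply in one stroke. I would therefore combine this with a direct combinatorial analysis of how $A$- and $B$-intervals interleave, using the strong constraint that all runs of $\mathbf{x}$ have length $\leq 2$, and treat the base cases separately. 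For instance, $(a,b)=(1,2)$ is immediate from Proposition~\ref{pp12}: each word listed there eventually contains either a cube $\alpha^3$ or the factor $ababa$ of exponent $5/2 > 7/3$. The remaining difficulty is packaging the case analysis into a uniform argument in $(a,b)$, or alternatively reducing small values to a finite verification with \texttt{Walnut} while handling large $(a,b)$ via the Fine--Wilf-plus-interleaving argument.
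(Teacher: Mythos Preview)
Your approach has a genuine gap that you identify but underestimate. The Fine--Wilf step needs an overlap of length at least $a+b-\gcd(a,b)$ between a window carrying period $a$ and one carrying period $b$. But your own bound shows every window on which $\mathbf{x}$ has period $a$ has length strictly less than $7a/3$; whenever $b > \tfrac{4a}{3} + \gcd(a,b)$ --- which is the generic situation --- no such window is long enough to contain the required overlap, and Fine--Wilf simply cannot fire. Your proposed division of labour, ``Fine--Wilf for large $(a,b)$, finite verification for small $(a,b)$'', is therefore backwards: it is precisely for large $b/a$ that the argument collapses, and no finite check handles that regime. Note also that the result is sharp at $7/3$ (the paper later exhibits a $(1,6)$-pseudoperiodic binary word that is $(7/3)^+$-free), so any valid proof must explain why $7/3$, rather than some larger exponent, is the threshold; nothing in your local-period reasoning singles out this constant.

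The paper's argument supplies the missing global ingredient. It invokes a theorem of Karhum\"aki and Shallit: every infinite $(7/3)$-power-free binary word contains $\mu^i(0)$ as a factor for every $i\ge 0$, hence contains arbitrarily long prefixes of the Thue--Morse word $\mathbf{t}$. Combined with Theorem~\ref{tm-linear-bound} (verified by \texttt{Walnut}), which says that for every $1\le a<b$ there is an index $n\le \tfrac{5}{3}b$ with $\mathbf{t}[n]\notin\{\mathbf{t}[n+a],\mathbf{t}[n+b]\}$, a sufficiently long Thue--Morse prefix fails to have pseudoperiod $(a,b)$, contradicting its presence as a factor of $\mathbf{x}$. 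The constant $7/3$ enters through the Karhum\"aki--Shallit structural theorem, not through any Fine--Wilf style bound.
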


\begin{proof}
Suppose $\bf x$ has pseudoperiod $1 \leq a < b$, but
is $(7/3)$-power-free.
Theorem 6 of 
\cite{Karhumaki&Shallit:2004} says that every infinite
$(7/3)$-power-free binary word 
contains factors of the
form $\mu^i (0)$ for all $i \geq 0$.  These factors are all prefixes
of $\bf t$.

However, as we have seen in Theorem~\ref{tm-linear-bound}, the prefix
of length ${5\over 3}b + 1$ of $\bf t$ cannot have pseudoperiod
$a,b$, as the relation
${\bf t}[n] \in \{ {\bf t}[n+a], {\bf t}[n+b] \} $ is violated
for some $n\leq {5\over 3}b$.  Thus it suffices to choose
$i$ large enough such that $2^i \geq {5 \over 3} b + 1$.
This contradiction proves the result.
\end{proof}

Next we consider the case $b = 2a$.
\begin{proposition}
Let $a\geq 1$ be an integer.
If an infinite word has pseudoperiod $(a,2a)$,
then it has critical exponent $\infty$.
\end{proposition}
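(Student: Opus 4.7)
The plan is to show that the pseudoperiod $(a,2a)$ forces $\bf w$ to be ultimately periodic with period dividing $2a$. Once this is established, the critical exponent is automatically $\infty$, because arbitrarily long integer powers of the periodic block then occur as factors of $\bf w$.

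The first step is to decompose $\bf w$ along arithmetic progressions of common difference $a$. For each residue $r\in\{0,1,\ldots,a-1\}$, set $c^{(r)}_j={\bf w}[ja+r]$. Substituting $i=ja+r$ into the pseudoperiod condition ${\bf w}[i]\in\{{\bf w}[i+a],{\bf w}[i+2a]\}$ shows that each of the $a$ sub-sequences inherits the same combinatorial rule, namely $c^{(r)}_j\in\{c^{(r)}_{j+1},c^{(r)}_{j+2}\}$ for every $j\ge 0$.

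The heart of the argument is the following claim, which I would prove next: any sequence $(c_j)_{j\ge 0}$ over a finite alphabet satisfying $c_j\in\{c_{j+1},c_{j+2}\}$ is eventually periodic of period at most $2$. If $c_j=c_{j+1}$ for every $j$, the sequence is constant and we are done. Otherwise pick the least $j$ with $c_j\neq c_{j+1}$; then the condition forces $c_j=c_{j+2}$, and since $c_{j+1}\neq c_{j+2}$, applying the condition at index $j+1$ forces $c_{j+1}=c_{j+3}$. A short induction on $k$, using the same alternation argument, shows $c_{j+k}=c_{j+k+2}$ for all $k\ge 0$, so from position $j$ onwards the sub-sequence is strictly $2$-periodic on the two values $c_j$ and $c_{j+1}$.

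Applying this lemma to each of the $a$ residue classes and choosing $N$ large enough that every sub-sequence is $2$-periodic beyond index $\lceil N/a\rceil$, one obtains ${\bf w}[n]={\bf w}[n+2a]$ for all $n\ge N$. Thus $\bf w$ has an ultimately periodic tail of period $2a$, which means that for every $k$ the factor ${\bf w}[N\mathinner{.\,.} N+2ak-1]$ is a $k$-th power, so the critical exponent is at least $k$ for all $k$, hence $\infty$. I do not anticipate any real obstacle; the only delicate point is the case split on the sub-sequences, which is settled by the very short induction above and works uniformly over any finite alphabet, so no binary-alphabet assumption is needed.
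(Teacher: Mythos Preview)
Your proof is correct and follows essentially the same route as the paper: decompose $\mathbf{w}$ into the $a$ arithmetic subsequences modulo $a$, observe that each inherits pseudoperiod $(1,2)$, show (the paper cites its earlier Proposition~\ref{pp12}, while you give a short direct induction) that any such sequence is eventually $2$-periodic, and conclude that $\mathbf{w}$ is eventually periodic with period $2a$. The only difference is cosmetic---you prove the $(1,2)$ lemma inline and note explicitly that it works over any finite alphabet.
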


\begin{proof}
Suppose that $\bf x$ has pseudoperiod $(a,2a)$.  From $\bf x$ extract
the subsequences $${\bf x}_{a,i} = ({\bf x}[an+i])_{n \geq 0}$$ 
corresponding to indices that are congruent to $i$ (mod $a$), for
$0 \leq i < a$.  Clearly each such subsequence has pseudoperiod $(1,2)$.
By Proposition~\ref{pp12}, each subsequence ${\bf x}_{a,i}$ must be of the form
$c^\omega$ or $c^* (cd)^\omega$ for $c, d$ distinct letters. It now follows that $\bf x$ is eventually periodic
with period $2a$, and hence has infinite critical exponent.
\end{proof}

\begin{proposition}
Let $\alpha\geq 2^+$.
If an $\alpha$-free (resp., $\alpha^+$-free) binary word $w$ has pseudoperiod $(a_1, \ldots, a_k)$, then $\mu(w)$ is an $\alpha$-free (resp., $\alpha^+$-free) binary word with pseudoperiod $(2a_1, \ldots, 2a_k)$.
\label{doublepp}
\end{proposition}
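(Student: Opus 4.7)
The plan is to split the statement into its two parts: that $\mu(w)$ has pseudoperiod $(2a_1,\ldots,2a_k)$, which is a direct calculation, and that $\mu(w)$ inherits the $\alpha$-freeness (resp.\ $\alpha^+$-freeness) of $w$, which reduces to the well-known fact that the Thue-Morse morphism preserves $\alpha$-freeness for $\alpha \geq 2^+$.

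For the pseudoperiod claim, the key identities are $\mu(w)[2i] = w[i]$ and $\mu(w)[2i+1] = \overline{w[i]}$, both immediate from $\mu(0)=01$ and $\mu(1)=10$. Given a valid index $j$ of $\mu(w)$, write $j = 2i+\varepsilon$ with $\varepsilon\in\{0,1\}$. Then $\mu(w)[j+2a_l] = \mu(w)[2(i+a_l)+\varepsilon]$ equals $w[i+a_l]$ if $\varepsilon=0$ and $\overline{w[i+a_l]}$ if $\varepsilon=1$. The pseudoperiod property of $w$ at index $i$ yields some $l$ with $w[i]=w[i+a_l]$; taking complements when $\varepsilon=1$ converts this into $\mu(w)[j]=\mu(w)[j+2a_l]$. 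A quick check shows that in both parities the range $0 \leq j < 2|w|-2a_k$ corresponds to the range $0 \leq i < |w|-a_k$ for which the hypothesis on $w$ applies, so $(2a_1,\ldots,2a_k)$ is a pseudoperiod of $\mu(w)$.

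For the $\alpha$-freeness claim, the plan is to use the contrapositive of the classical preservation theorem: any factor of $\mu(w)$ of exponent $\geq \alpha > 2$ desubstitutes to a factor of $w$ of exponent $\geq \alpha$. In a self-contained proof, one splits on the parity of the period $p$ of a putative repetition $u$ in $\mu(w)$. When $p = 2q$ is even, the slack afforded by $|u|/p > 2$ lets one align an interior window of $u$ with the $\mu$-block boundaries and read off a factor of $w$ with period $q$ and the same exponent. When $p$ is odd, a pair of equal letters at positions $i$ and $i+p$ of $u$ sit at opposite parities in $\mu(w)$, forcing equalities of the form $w[\cdot]=\overline{w[\cdot]}$ over a long stretch and contradicting the binary alphabet. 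The same dichotomy handles the $\alpha^+$-free variant.

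The main obstacle is the bookkeeping in these desubstitution arguments---in particular, the boundary offsets of $u$ and the odd-period case. Rather than redo this by hand, the cleanest route is to cite the classical preservation result (used, e.g., in \cite{Karhumaki&Shallit:2004} and many of its successors) and apply it, reducing the proposition to the short pseudoperiod calculation above.
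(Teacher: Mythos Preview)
Your proposal is correct and follows essentially the same approach as the paper: the paper's proof simply declares the pseudoperiod claim ``clear'' and cites \cite[Theorem~5]{Karhumaki&Shallit:2004} for the power-freeness preservation, while you spell out the pseudoperiod calculation explicitly and invoke the same reference for the second part. Your added detail is sound (the index-range check and the parity argument are both correct), so there is nothing to correct.
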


\begin{proof}
The claim about the pseudoperiods is clear. The result about power-freeness
can be found in, e.g., \cite[Theorem 5]{Karhumaki&Shallit:2004}.
\end{proof}

We now summarize our results on critical exponents in the following table.
Each entry corresponding to a pseudoperiod $(a,b)$ with $b \not=2a$ has three entries:
\begin{itemize}
\item[(a)] upper left:  an exponent $e$, where the repetition threshold for $C_{a,b}$ is $e^+$;
\item[(b)] upper right:  the length of the longest finite word having pseudoperiod $(a,b)$ and avoiding $e$-powers;
\item[(c)] lower line: the morphic word with pseudoperiod $(a,b)$ and avoiding $e^+$ powers.
\end{itemize}

\begin{table}[htb]
\begin{center}
\resizebox{.95\textwidth}{!}{
\begin{tabular}{|c|c|c|c|c|c|c|c|c|c|c|c|c|}
\hline
\backslashbox{$a$}{$b$} & 2 & 3 & 4 & 5 & 6 & 7 & 8 & 9 & 10 & 11 & 12 \\
\hline
1 & $\infty$ & $5/2 \quad 33$     & $3 \quad 11$       & $13/5 \quad 29$    & $7/3 \quad 15$       & $3 \quad 61$       & $3 \quad 45$       & $5/2 \quad 43$     & $5/2 \quad 33$      & $5/2 \quad 52$      & $5/2 \quad 57$ \\
  &          & $h_{1,3}({\bf t})$ & $h_{1,4}({\bf t})$ & $h_{1,5}({\bf t})$ & $h_{1,6}({\bf vtm})$ & $h_{1,7}({\bf t})$ & $h_{1,8}({\bf t})$ & $h_{1,9}({\bf t})$ & $h_{1,10}({\bf t})$ & $h_{1,11}({\bf t})$ & $h_{1,12}({\bf t})$ \\
\hline
2 &   & $13/5 \quad 30$    & $\infty$ & $3 \quad 15$       & $5/2 \quad 66$          & $13/5 \quad 84$    & $13/5 \quad 30$    & $5/2 \quad 19$     & $13/5 \quad 60$     & $5/2 \quad 20$      & $7/3 \quad 31$ \\
  &   & $h_{1,5}({\bf t})$ &          & $h_{2,5}({\bf t})$ & $\mu(h_{1,3}({\bf t}))$ & $h_{2,7}({\bf t})$ & $h_{1,5}({\bf t})$ & $h_{2,9}({\bf t})$ & $\mu(h_{1,5}({\bf t}))$ & $h_{2,11}({\bf t})$ & $\mu(h_{1,6}({\bf vtm}))$ \\
\hline
3 &   &   & $5/2 \quad 33$     & $13/5 \quad 34$    & $\infty$ & $13/5 \quad 98$    & $5/2 \quad 42$     & $8/3 \quad 28$     & $13/5 \quad 69$    & $5/2 \quad 59$     & $8/3 \quad 72$ \\
  &   &   & $h_{1,3}({\bf t})$ & $h_{1,5}({\bf t})$ &          & $h_{1,5}({\bf t})$ & $h_{1,3}({\bf t})$ & $h_{3,9}({\bf t})$ & $h_{1,5}({\bf t})$ & $h_{1,3}({\bf t})$ & $h_{3,12}({\bf t})$ \\
\hline
4 &   &   &   & $3 \quad 21$       & $7/3 \quad 40$     & $3 \quad 61$       & $\infty$ & $7/3 \quad 18$       & $5/2 \quad 33$      & $5/2 \quad 19$      & $5/2 \quad 141$ \\
  &   &   &   & $h_{4,5}({\bf t})$ & $h_{4,6}({\bf t})$ & $h_{1,7}({\bf t})$ &          & $h_{1,6}({\bf vtm})$ & $h_{1,10}({\bf t})$ & $h_{4,11}({\bf t})$ & $\mu^2(h_{1,3}({\bf t}))$ \\
\hline
5 &   &   &   &   & $5/2 \quad 66$     & $3 \quad 68$       & $13/5 \quad 33$    & $5/2 \quad 66$     & $\infty$ & $5/2 \quad 20$      & $18/7 \quad 158$ \\
  &   &   &   &   & $h_{5,6}({\bf t})$ & $h_{2,5}({\bf t})$ & $h_{1,5}({\bf t})$ & $h_{2,6}({\bf t})$ &          & $h_{2,11}({\bf t})$ & $h_{5,12}({\bf t})$ \\
\hline
6 &   &   &   &   &   & $7/3 \quad 40$     & $5/2 \quad 60$     & $17/6 \quad 89$    & $7/3 \quad 48$      & $5/2 \quad 69$      & $\infty$ \\
  &   &   &   &   &   & $h_{4,6}({\bf t})$ & $\mu(h_{1,3}({\bf t}))$ & $h_{6,9}({\bf t})$ & $h_{6,10}({\bf t})$ & $h_{1,11}({\bf t})$ & \\
\hline
7 &   &   &   &   &   &   & $13/5 \quad 50$    & $7/3 \quad 41$     & $13/5 \quad 92$    & $13/5 \quad 84$ & $7/3 \quad 31$ \\
  &   &   &   &   &   &   & $h_{1,5}({\bf t})$ & $h_{4,6}({\bf t})$ & $h_{2,7}({\bf t})$ & $h_{7,11}({\bf t})$ & $h_{1,6}({\bf vtm})$ \\
\hline
8 &   &   &   &   &   &   &   & $5/2 \quad 66$     & $5/2 \quad 33$      & $3 \quad 65$        & $7/3 \quad 82$ \\
  &   &   &   &   &   &   &   & $h_{8,9}({\bf t})$ & $h_{1,10}({\bf t})$ & $h_{8,11}({\bf t})$ & $\mu(h_{4,6}({\bf t}))$ \\
\hline
9 &   &   &   &   &   &   &   &   & $7/3 \quad 40$      & $5/2 \quad 57$      & $55/21 \quad 200$ \\
  &   &   &   &   &   &   &   &   & $h_{6,10}({\bf t})$ & $h_{9,11}({\bf t})$ & $h_{9,12}({\bf t})$  \\
\hline
10 &   &   &   &   &   &   &   &   &   & $5/2 \quad 33$      & $7/3 \quad 54$\\
   &   &   &   &   &   &   &   &   &   & $h_{1,10}({\bf t})$ & $h_{10,12}({\bf t})$\\
\hline
11 &   &   &   &   &   &   &   &   &   &   & $7/3 \quad 31$ \\
   &   &   &   &   &   &   &   &   &   &   & $h_{11,12}({\bf vtm})$\\
\hline
\end{tabular}}
\end{center}
\caption{Optimal critical exponents for binary words with certain specified pseudoperiod.}
\label{tab1}
\end{table}

See the files 
\texttt{longest\_finite\_seqs.txt} and
\texttt{critical\_exp\_morphisms.txt} at\\
\centerline{
\url{https://github.com/sonjashan/sha_gen.git}}\\
for the specific morphisms.

From examination of Table~\ref{tab1}, we see that all the critical
exponents are at most $3^+$. This leads to the following conjecture.

\begin{conjecture}
For all pairs $(a,b)$ with $1 \leq a < b$ and $b \not= 2a$,
there exists an infinite binary word with pseudoperiod $(a,b)$
and avoiding $3^+$-powers.
\label{conj1}
\end{conjecture}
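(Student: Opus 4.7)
The plan is to combine the reduction mechanism supplied by Proposition~\ref{doublepp} with the search-and-verify methodology of Section~\ref{sec5}. First, observe that if $w$ is a $3^+$-power-free binary word with pseudoperiod $(a',b')$, then $\mu(w)$ is a $3^+$-power-free binary word with pseudoperiod $(2a',2b')$, and the condition $b'\neq 2a'$ is preserved under coordinatewise doubling. Iterating, it suffices to prove the conjecture for pairs $(a,b)$ with $\gcd(a,b)$ odd, i.e., at least one of $a, b$ odd. This cuts the work in half at each even/even layer and, in particular, means the extremal cases to worry about all have at least one odd coordinate.

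For each such reduced pair, I would seek a morphism $h_{a,b}$ from some automatic sequence (a natural first choice being $\mathbf{t}$ or $\mathbf{vtm}$, following Table~\ref{tab1}) into $\{0,1\}^*$ such that the image has pseudoperiod $(a,b)$ and avoids $3^+$-powers. Both of these conditions are first-order expressible in the logical structure of the underlying automatic sequence --- pseudoperiodicity via essentially the formula used in the corollary following Proposition~\ref{proptwo}, and $3^+$-power-freeness via the standard \texttt{Walnut} construction. Hence, for any specific pair, the correctness of a candidate $h_{a,b}$ is decidable and can be certified by \texttt{Walnut}, exactly as was done throughout Section~\ref{sec5}. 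A heuristic search for $h_{a,b}$ succeeds in every case tabulated there and presumably extends over a wider range of parameters.

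The principal obstacle is uniformity: the conjecture asserts the existence of such a word for \emph{every} admissible $(a,b)$, of which there are infinitely many, whereas heuristic search and \texttt{Walnut} only dispatch one pair at a time. A plausible route is to parametrize $h_{a,b}$ by $a$, $b$, and perhaps by $a \bmod d$ and $b \bmod d$ for some small modulus $d$, writing $h_{a,b}(0)$ and $h_{a,b}(1)$ as a concatenation of $a$-type blocks and $b$-type blocks in a pattern dictated by these residues, and then establishing the pseudoperiod and $3^+$-free conditions for the whole family by a combinatorial argument on block overlaps. A natural template is to generalize the Sturmian construction behind Proposition~\ref{prop5}, which handles pairs of the form $(c,c+1)$ via blocks $0^{c-1}1$ and $0^c 1$, to arbitrary $(a,b)$ with $b\neq 2a$. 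The delicate step will be controlling long repetitions at the seams between blocks, since that is where the $3^+$-free condition is most likely to fail; it is this uniformity obligation, not any individual instance, that makes the problem a conjecture rather than a theorem.
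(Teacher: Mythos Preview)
This statement is labeled a conjecture in the paper, and the paper does not prove it; after stating Conjecture~\ref{conj1} the authors only report a computational verification for $1\le a<b\le 54$ and, in Theorem~\ref{percent}, a partial result covering about $85\%$ of pairs of the special form $(1,a)$ via residue-class constructions. So there is no proof for you to match, and you correctly recognize this in your final sentence.

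Your reduction via Proposition~\ref{doublepp} to pairs with at least one odd coordinate is valid and is exactly how the paper handles the even--even entries in Table~\ref{tab1} (the cells marked $\mu(\cdot)$ and $\mu^2(\cdot)$). Your proposed residue-class parametrization is also the mechanism behind Theorem~\ref{percent}: the paper finds, for each of several moduli $n$, uniform morphisms whose images of $\mathbf t$ have pseudoperiod $(1,a)$ for all $a$ in a fixed residue class mod $n$, and pushes the covered density up by taking more moduli. So your plan is not merely plausible; it is the route the paper actually takes for the $(1,a)$ subfamily, and it still falls short of $100\%$ coverage even there.

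The genuine gap you identify---uniformity over infinitely many $(a,b)$---is precisely the open content of the conjecture. Nothing in your sketch, and nothing in the paper, supplies a finite family of templates that provably covers all admissible pairs, nor a combinatorial argument bounding repetitions at block seams uniformly in $a$ and $b$. Your Sturmian-block idea is reasonable intuition but would need a concrete construction together with a seam-repetition lemma to become a proof; absent that, what you have is an accurate diagnosis of why Conjecture~\ref{conj1} remains open, not a proof of it.
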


We verified this conjecture for $1 \leq a < b \leq 54$.
For each pair of $a$ and $b$, we first try each previously
saved morphism $h$ on the Thue-Morse sequence $\bf t$
to see if $h(\bf t)$ has pseudoperiod $\{a,b\}$ and avoids $3^+$-powers.
If that fails, we use backtracking to search for a new morphism that
meets the criteria. 
Once we find such an morphism, we verify the pseudoperiodicity
and the powerfreeness with Walnut and save the morphism for future use.

The following morphism is an example. 
It is initially generated for $a=1$ and $b=5$
but it also works for 122 other pairs of $a$ and $b$ we tested. 

\begin{verbatim}
morphism sha3 "0->11100011000 1->11100111000":
image S3 sha3 T:
eval pp1_5_checkS3 "An (S3[n]=S3[n+1]|S3[n]=S3[n+5])":
eval cubeplusfree_S3 "~Ei,n n>0 & Aj (j<=2*n) => S3[i+j] = S3[i+j+n]":
\end{verbatim}

For more details on this implementation, please see the github repository at \\
\centerline{
\url{https://github.com/sonjashan/sha_gen.git} \ .}

Let us also provide details about the exceptional case of $a=1$ and $b=6$.
The morphic word with pseudoperiod $(1,6)$ which avoids $(7/3)^+$-powers
is $h_{1,6}({\bf vtm})$, where
$h_{1,6}(0) = 0011011001011001$,
$h_{1,6}(1) = 0011011001$, and
$h_{1,6}(2) = 001101$.

A simple computation shows that $h({\bf vtm})$ has pseudoperiod $(1,6)$
and that its factors of length $1000$ avoid $(7/3)^+$-powers.

Suppose that $h_{1,6}({\bf vtm})$ contains a factor $w$ that is a $(7/3)^+$-power. Thus $|w| > 1000$.
Notice that the factor $0011$ is a common prefix of the $h_{1,6}$-image of all three letters.
Moreover, $0011$ appears in $h_{1,6}({\bf vtm})$ only as the prefix of the $h$-image of a letter.

We consider the word $w'$ obtained from $w$ by erasing the smallest prefix of $w$ such that $w'$ starts with $0011$.
Since we erase at most $|h(0)|-1=15$ letters, the word $w'$ is a repetition of 
period $p$ and exponent at least $2.2$.

So $w'[1..4]=w'[p+1..p+4]=w'[2p+1..2p+4]=0011$.
This implies that $w'[1..2p]=h(uu)$ where the pre-image $uu$ must be a factor of $\bf vtm$.
This is a contradiction, since $\bf vtm$ is squarefree.

Finally, the results with a morphic word using $\mu$ as outer morphism
are obtained via Proposition~\ref{doublepp}.

\subsection{Binary words with pseudoperiods of the form $(1,a)$}

\begin{theorem}
For at least 85\% of all positive integers
$a \geq 3$ there is an infinite binary word with pseudoperiod $(1,a)$,
and avoiding $3^+$-powers.
\label{percent}
\end{theorem}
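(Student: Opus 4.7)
The plan is to build an explicit set $A \subseteq \{a \in \mathbb{N} \suchthat a \geq 3\}$ of natural density at least $0.85$ such that for every $a \in A$ we can exhibit an infinite $3^+$-power-free binary word with pseudoperiod $(1,a)$. My approach is computational, in the spirit of the rest of the paper: use uniform morphisms on $\{0,1\}^*$ applied to the Thue-Morse word and verify the required properties in \texttt{Walnut}.

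First I would identify a finite family $\mathcal{H}$ of short uniform morphisms $h$ such that $h({\bf t})$ is $3^+$-power-free; the morphism \texttt{sha3} already exhibited in the paper (a length-$11$ morphism which works for $123$ pairs) is the seed. New members of $\mathcal{H}$ can be generated by the same backtracking search already used to construct \texttt{sha3}. For each $h \in \mathcal{H}$, I would check power-freeness once in \texttt{Walnut}, and then construct the DFA for
\[
S_h := \{a \geq 3 \suchthat h({\bf t}) \text{ has pseudoperiod } (1,a)\},
\]
by encoding $\forall n\ (h({\bf t})[n] = h({\bf t})[n+1] \lor h({\bf t})[n] = h({\bf t})[n+a])$ and restricting to $a \geq 3$. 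Because $h({\bf t})$ is morphic and the formula is first-order, $S_h$ is a regular set (in base $2$, or an appropriate mixed base if $|h(0)|$ is not a power of $2$).

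With the DFAs for the $S_h$ in hand, I would compute the union automaton for $S = \bigcup_{h \in \mathcal{H}} S_h$ and evaluate its natural density using standard linear-algebra algorithms on the transition structure (e.g.\ the stationary distribution of the associated stochastic matrix in base $2$). I would then iteratively enlarge $\mathcal{H}$ by searching for new short morphisms that cover residue classes not yet handled, terminating as soon as the computed density of $S$ exceeds $0.85$. The conclusion of the theorem follows, since for every $a \in S$ the infinite word $h({\bf t})$ realizes the required pseudoperiod and power-freeness.

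The main obstacle is quantitative rather than conceptual: a uniform morphism of length $L$ can only contribute coverage in at most $L$ residue classes modulo $L$, and a crude count on \texttt{sha3} alone shows that a single morphism is far short of $85\%$ of the integers. Hence the key work lies in aggregating enough morphisms so that $\bigcup_h S_h$ achieves the required density, while keeping each morphism short enough that the corresponding \texttt{Walnut} computations remain tractable. A secondary obstacle is the fact that images under morphisms whose lengths are not powers of $2$ are not $2$-automatic in the usual sense, so careful treatment (via a mixed-base numeration system, or via restriction to morphisms whose lengths share a common base) will be needed to keep all the automata within a uniform framework where densities can be computed and unions taken mechanically.
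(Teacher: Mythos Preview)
Your plan is sound and matches the paper's core strategy: build morphic images $h({\bf t})$ of the Thue--Morse word, verify $3^+$-power-freeness and pseudoperiod $(1,a)$ in \texttt{Walnut}, and accumulate coverage until the density exceeds $0.85$. The paper's execution, however, differs from yours in a way that dissolves your ``secondary obstacle'' entirely. Rather than computing each $S_h$ as a regular language and taking automaton unions across possibly incompatible numeration systems, the paper verifies for each $n$-uniform morphism $h$ the stronger first-order assertion
\[
\forall p\ \bigl(p \equiv i \pmod{n} \implies \forall m\ h({\bf t})[m] \in \{h({\bf t})[m+1],\, h({\bf t})[m+p]\}\bigr),
\]
so that each morphism certifiably contributes an entire arithmetic progression $\{a \suchthat a \equiv i \pmod{n}\}$ to the covered set. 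The union of such progressions and its density are then purely arithmetic: the paper exhibits twenty specific morphisms covering selected residue classes modulo $5,7,9,11,13,14,15,16$, takes $\lcm(5,7,9,11,13,14,15,16)=720720$, counts $614614$ covered residues, and reads off density $614614/720720 \doteq 0.852$. Your route would also succeed but requires the mixed-base automaton machinery you flag; the paper's route is more elementary because it never needs to represent the covered set as an automaton at all, and each \texttt{Walnut} check is self-contained for its own morphism.
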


\begin{proof}
The idea is to search for words with the given properties that have pseudoperiod
$(1,a)$ for all $a$ in a given residue class $a \equiv \modd{i} {n}$.
As before, our words are constructed by applying an $n$-uniform morphism (obtained by a heuristic search)
to the Thue-Morse word $\bf t$, and then correctness is verified with 
\texttt{Walnut}.

Our results are summarized in Table~\ref{tab2}.
\begin{table}[htb]
\begin{minipage}{.45\textwidth}
\begin{center}
\begin{tabular}{|c|c|l|}
\hline
$i$ & $n$ & morphism \\
\hline
4 & 5 & $0 \rightarrow 00011$ \\
  &   & $1 \rightarrow 00111$ \\
  \hline
3 & 7 & $0 \rightarrow 0010011$ \\
  &   & $1 \rightarrow 0011011$ \\
  \hline
4 & 9 & $0 \rightarrow 000100011$ \\
  &   & $1 \rightarrow 000110011$ \\
  \hline
8 & 9 & $0 \rightarrow 110011000$ \\
  &   & $1 \rightarrow 110011100$ \\
  \hline
4 & 11 & $0 \rightarrow  11000111000$\\
  &    & $1 \rightarrow  11000111001$\\
  \hline
5 & 11 & $0 \rightarrow  11000111000$\\
  &    & $1 \rightarrow  11000111001$\\
  \hline
7 & 11 & $0 \rightarrow  10011001000$\\
  &    & $1 \rightarrow  10011001001$\\
  \hline
8 & 11 & $0 \rightarrow 10110100100$\\
  &    & $1 \rightarrow 10110100101$\\
  \hline
10 & 11 & $0 \rightarrow 11000111000$\\
   &    & $1 \rightarrow 11000111001$\\
  \hline
\end{tabular}
\end{center} 
\end{minipage}
\quad\quad
\begin{minipage}{.45\textwidth}
\begin{center}
\begin{tabular}{|c|c|l|}
\hline
$i$ & $n$ & morphism \\
\hline
5 & 13 & $0 \rightarrow  1010010110100$\\
  &    & $1 \rightarrow  1010010110101$\\
  \hline
8 & 13 & $0 \rightarrow  1100110001000$\\
  &    & $1 \rightarrow  1100110001001$\\
  \hline
4 & 14 & $0 \rightarrow 11000100011000$ \\
  &    & $1 \rightarrow 11000100011001$ \\
  \hline
9 & 14 & $0 \rightarrow 11001110011000$ \\
  &    & $1 \rightarrow 11001110011101$ \\
  \hline
13 & 14 & $0 \rightarrow 11001100111000$ \\
  &     & $1 \rightarrow 11001100011001$ \\
  \hline
7 & 15 & $0 \rightarrow 110110011001000$ \\
  &    & $1 \rightarrow 110110011001001$ \\
  \hline
4 & 16 & $0 \rightarrow 1000111000111000$ \\
  &    & $1 \rightarrow 1000111000111001$ \\
  \hline
6 & 16 & $0 \rightarrow 1011001001101000$ \\
  &    & $1 \rightarrow 1011001001101001$ \\
\hline
10 & 16 & $0 \rightarrow 1000111000111000$ \\
   &    & $1 \rightarrow 1000111000111001$ \\
  \hline
15 & 16 & $0 \rightarrow 1100111000111000$ \\
   &    & $1 \rightarrow 1100111000110001$ \\
\hline
\end{tabular}
\end{center}
\end{minipage}
\caption{Words avoiding $3^+$ powers with pseudoperiods in residue classes.}
\label{tab2}
\end{table}

As an example, here is the \texttt{Walnut} code
verifying the results for $(i,n) = (4,5)$:
\begin{verbatim}
morphism a45 "0->00011 1->00111":
image B45 a45 T:
eval cube45 "~Ei,n (n>=1) & At (t<=2*n) => B45[i+t]=B45[i+n+t]":
eval test45 "Ap (Ek p=5*k+4) => An (B45[n]=B45[n+1]|B45[n]=B45[n+p])":
\end{verbatim}
and both commands return \texttt{TRUE}.

The residue classes in Table~\ref{tab2}
correspond to $n = 5,7,9,11,13,14,15,16$.
Now \\ $\lcm(5,7,9,11,13,14,15,16) = 720720$, and the residue classes above cover
$614614$ of the possible residues (mod $720720$).
So we have covered $614614/720720 \doteq .852$ of all the possible~$a$.
\end{proof}

Theorem~\ref{percent} can obviously be improved by considering larger moduli.
For example, there exists a morphism for every residue class modulo $41$
except $0, 1, 2, 5, 6, 21, 23, 39$.

\section{Larger alphabets}
\label{sec6}

Up to now we have been mostly concerned with binary words.
In this section we consider pseudoperiodicity in larger alphabets.

The (unrestricted) repetition threshold $RT(k)$ for words over $k$ letters
is well-known: we have $RT(3)=7/4$, $RT(4)=7/5$, and $RT(k)=k/(k-1)$
if $k=2$ or $k\geq 5$ \cite{Currie&Rampersad:2011,Rao:2011}.
Notice that the words attaining the repetition threshold
are necessarily $3$-pseudoperiodic. Indeed, every infinite $(k-1)/(k-2)$-free
word over $k\geq 3$ letters is $(k-1,k,k+1)$-periodic.
Thus, it remains to investigate $2$-pseudoperiodic words.

Let us consider the repetition threshold $RT'(k)$
for $2$-pseudoperiodic words over $k$ letters.
Obviously, $RT(k)\leq RT'(k)$. From the previous section, we know that
$RT'(2)=7/3$. The following results show that $RT'(3)=7/4$, $RT'(4)\leq 3/2$,
and $RT'(5)\leq 4/3$, respectively.

\begin{theorem}
The image of every $(7/5)^+$-free word over 4 letters by the following 188-uniform
morphism avoids $(7/4)^+$-powers and has pseudoperiod $(18, 37)$.

\begin{align*}
    0 \rightarrow\ & p201021201210120102120210201202120121020102120121012010210\\
                   & 1210201202120121020102101201020120210121021201210120102120\\
    1 \rightarrow\ & p201021201210120102101210201202120121021202101201020120210\\
                   & 1210212012101201021202101201020120210201021201210120102120\\
    2 \rightarrow\ & p201021201210120102101210201202120121020102101201020120210\\
                   & 1210212012101201021202102012021201210201021201210120102101\\
    3 \rightarrow\ & p121021201210120102120210120102012021020102120121012010210\\
                   & 1210201202120121021202101201020120210121021201210120102120\\
\end{align*}
where $p = 2102012021201210201021012010201202101210201202120121021202101201020120210$.




\label{dejean3}
\end{theorem}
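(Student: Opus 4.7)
The plan is to split the proof into two independent verifications: the pseudoperiodicity condition and the avoidance of $(7/4)^+$-powers.

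For the pseudoperiod $(18, 37)$ claim, I note that $h$ is $188$-uniform and the largest shift is $37 < 188$, so for any position $i$ in $h(w)$, both $h(w)[i+18]$ and $h(w)[i+37]$ lie in the concatenation $h(a)h(b)$ (or $h(a)h(b)h(c)$ in worst case) for consecutive letters $a, b, c$ of $w$. The condition $h(w)[i]\in\{h(w)[i+18],h(w)[i+37]\}$ therefore reduces to a finite check over all length-$3$ factors of any $(7/5)^+$-free $4$-letter word. Since such $w$ is in particular squarefree, this leaves at most $4\cdot 3\cdot 3 = 36$ triples to inspect (and fewer if we further restrict using $(7/5)^+$-freeness). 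A direct computation over these triples verifies the pseudoperiod property.

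For the $(7/4)^+$-freeness claim, I would use the standard preimage technique for showing that morphic images of power-free words remain power-free. Suppose for contradiction that $h(w)$ contains a factor $u$ which is a $(7/4)^+$-power with period $p$. I split into two cases based on $p$. In the short-period case, $p$ is below a computable threshold (on the order of a few image lengths, say $p \leq 5\cdot 188$), and so $u$ is contained in the image of some bounded-length factor of $w$; one then enumerates all such factors (respecting the $(7/5)^+$-free constraint over $4$ letters) and verifies by direct computation that no $(7/4)^+$-power appears. In the long-period case, the critical step is a synchronization argument: the common prefix $p$ of length $73$ shared by all four images, together with some internal uniqueness property of the morphism, forces any sufficiently long factor of $h(w)$ to desubstitute uniquely. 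Under such synchronization, the period $p$ of $u$ must be congruent to a multiple of $188$, say $p = 188q + r$ with $r$ bounded, and desubstituting $u$ then yields a factor of $w$ with period $q$ and length at least $(7/4)q \cdot (1 - o(1))$, which after a short calculation produces a $(7/5)^+$-power in $w$, contradicting the hypothesis.

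The main obstacle is establishing the synchronization property rigorously, since a $188$-uniform morphism in this range need not be injective on long factors without careful design; this typically requires checking that the prefix $p$ (and perhaps certain internal "marker" blocks) appears in each $h(i)$ only as a prefix, so that image boundaries can be recovered. The bulk of the work is therefore computational: enumerating short $(7/5)^+$-free $4$-letter factors, applying $h$, and searching for $(7/4)^+$-powers; bounding the exact threshold between the short- and long-period cases; and verifying the synchronization lemma. All of these steps, while intricate, are routine finite computations that can be carried out by an ad hoc program (or in principle verified by a theorem-prover once the synchronization structure is encoded).
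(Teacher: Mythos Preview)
Your proposal is correct and follows essentially the same approach as the paper. The paper's proof simply invokes \cite[Lemma~2.1]{Ochem:2006} (and its extension in \cite[Lemma~23]{Mol&Rampersad&Shallit:2020}), which packages exactly the synchronization-plus-finite-check argument you describe: one verifies that the common prefix $p$ occurs in $h(w)$ only as the prefix of the image of a letter (this is the synchronization property you flagged as the main obstacle), and then the cited lemma supplies the explicit bound $t$ such that checking $(7/4)^+$-freeness of $h(u)$ for all $(7/5)^+$-free words $u$ of length $t$ suffices. Your short-period/long-period split and desubstitution argument is precisely the content of that lemma, so you have rediscovered the standard machinery rather than taken a different route.
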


\begin{theorem}
The image of every $(7/5)^+$-free word over 4 letters by the following 170-uniform
morphism avoids $(3/2)^+$-powers and has pseudoperiod $(4, 10)$.

\begin{align*}
    0 \rightarrow\ & p301020323132102010313231201020323130102012313230201021323120102032\\
                   & 313210201031323020102132313010203231321020123132302010313231201020\\
    1 \rightarrow\ & p201020323132102012313230201021323130102012313210201031323120102132\\
                   & 313010203231321020103132302010213231301020123132302010313231201020\\
    2 \rightarrow\ & p201020323132102010313231201021323130102032313210201231323020102132\\
                   & 313010201231321020103132312010203231301020123132302010313231201021\\
    3 \rightarrow\ & p201020323132102010313231201021323130102012313230201021323120102032\\
                   & 313010201231321020103132312010203231321020123132302010313231201021\\
\end{align*}
where $p = 32313010201231321020103132302010213231$.



\label{dejean4}
\end{theorem}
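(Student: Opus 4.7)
The plan is to verify the two properties of the statement independently. Let $h$ denote the $170$-uniform morphism given in the theorem, and let $\mathbf{w}$ be any $(7/5)^+$-free word over $\{0,1,2,3\}$; the target is $h(\mathbf{w})$.

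For the pseudoperiod $(4,10)$: since both offsets are strictly less than the block length $170$, for any position $i=170q+r$ with $0\leq r<170$, the three positions $i$, $i+4$, $i+10$ all lie inside $h(\mathbf{w}[q]\mathbf{w}[q+1])$, and whether the pseudoperiod condition holds at $i$ depends only on the pair $(\mathbf{w}[q],\mathbf{w}[q+1])$. It therefore suffices to check, for each of the $4^2=16$ pairs $(a,b)$ and each $r\in\{0,\ldots,169\}$, that $h(ab)[r]\in\{h(ab)[r+4],h(ab)[r+10]\}$. This is a routine finite computation (at most $16\cdot 170$ comparisons), requiring no pseudoperiodic hypothesis on $\mathbf{w}$ itself.

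For the $(3/2)^+$-power-freeness, I would follow the standard Ochem-style dichotomy on the period of a hypothetical power, in close analogy with the pull-back argument already used earlier in the paper for $h_{1,6}(\mathbf{vtm})$. Assume for contradiction that $h(\mathbf{w})$ contains a factor $z$ of exponent strictly greater than $3/2$ with minimal period $\pi$, and fix a threshold $P_0$ depending on $h$. In the short-period case $\pi\leq P_0$, the factor $z$ sits inside $h(v)$ for some factor $v$ of $\mathbf{w}$ of bounded length $L=O(P_0/170)$; the set of such $v$ is finite and can be enumerated among the length-$L$ factors of $(7/5)^+$-free words over four letters, after which one checks computationally that no such $h(v)$ contains a $(3/2)^+$-power. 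In the long-period case $\pi>P_0$, I would invoke synchronization of $h$: since all four images $h(a)$ share the common prefix of length $38$ displayed in the theorem, one verifies mechanically that this prefix occurs in any $h(ab)$ only at positions $0$ and $170$, so block boundaries in $h(\mathbf{w})$ are uniquely recognizable. This forces the $(3/2)^+$-power in $h(\mathbf{w})$ to pull back, modulo a boundary loss of at most two blocks, to a factor of $\mathbf{w}$ with period $\pi/170$ and exponent exceeding $7/5$, contradicting the freeness hypothesis on $\mathbf{w}$.

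The main obstacle is the calibration of $P_0$ together with the arithmetic in the long-period case: the boundary loss costs a factor of roughly $\pi/(\pi+2\cdot 170)$ in the exponent, so one must take $P_0$ large enough that $(3/2)\cdot \pi/(\pi+340)>7/5$ holds throughout the long-period regime, yet small enough that the short-period enumeration remains tractable. Because the arithmetic gap between $3/2$ and $7/5$ is narrow, this calibration is the delicate step; once $P_0$ is fixed and the synchronization check is carried out, the remainder reduces to a finite, mechanizable verification of images $h(v)$ for $v$ ranging over a known finite set.
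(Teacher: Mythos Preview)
Your proposal is correct and follows essentially the same route as the paper. The paper's proof invokes \cite[Lemma~2.1]{Ochem:2006} (and its extension in \cite{Mol&Rampersad&Shallit:2020}), which packages exactly the synchronization-plus-finite-check argument you spell out: the common prefix $p$ is verified to occur in images only at block boundaries, giving synchronization, and then the lemma supplies the explicit bound $t$ on the length of $(7/5)^+$-free preimage words whose images must be checked for $(3/2)^+$-powers---this is precisely your calibration of $P_0$, already worked out in that lemma so you need not redo the arithmetic. Your pseudoperiod check via the $16$ two-letter images is the natural one and is what the paper implicitly relies on as well.
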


\begin{theorem}
The image of every $(5/4)^+$-free word over 5 letters by the following 84-uniform
morphism avoids $(4/3)^+$-powers and has pseudoperiod $(9, 19)$.

\begin{align*}
    0 \rightarrow\ & p312402104302403104201403204230243210230140210420124320423124021423024031\\
    1 \rightarrow\ & p312402104301403204231203210230140210430120310420124320423124021423024032\\
    2 \rightarrow\ & p012432102312402104301403104201243204230140210430120310423024321023014031\\
    3 \rightarrow\ & p012402104302403104201243210231240210420140320423120321423024321023014032\\
    4 \rightarrow\ & p012402104301403104201243214231240210420140320423124321423024031023014032\\
\end{align*}
where $p = 043012032142$.



\label{dejean5}
\end{theorem}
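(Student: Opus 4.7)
The plan is to follow the template of Theorems~\ref{dejean3} and~\ref{dejean4}: a direct tabular verification for the pseudoperiod, and a synchronisation-plus-desubstitution argument for the $(4/3)^+$-freeness. Throughout, let $h$ denote the $84$-uniform morphism defined in the statement and let $\bf w$ be an arbitrary $(5/4)^+$-free word over $\{0,1,2,3,4\}$ (such $\bf w$ exist by Dejean's theorem for $5$ letters).

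For the pseudoperiod claim, since $h$ is $84$-uniform and $19<84$, the condition $h({\bf w})[i]\in\{h({\bf w})[i+9],h({\bf w})[i+19]\}$ at any position $i$ depends only on the at most two consecutive source letters whose $84$-blocks cover positions $i$, $i+9$, and $i+19$. A $(5/4)^+$-free word contains no square $aa$, so only the $20$ pairs $ab$ with $a\neq b$ occur as length-$2$ factors of $\bf w$. Thus it suffices to check, for each such pair and each $i\in\{0,1,\dots,83\}$, the membership inside the $168$-letter word $h(a)h(b)$. This is a finite, purely mechanical verification.

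For the $(4/3)^+$-freeness, the core tool is synchronisation. The common prefix $p=043012032142$ of length $12$ appears at the start of every $h(a)$, and inspection of the $20$ words $h(a)h(b)$ (with $a\neq b$) should locate every internal occurrence of $p$ and confirm that it only arises at block-aligned positions. This yields an explicit constant $L$ such that any factor of $h({\bf w})$ of length at least $L$ admits a unique desubstitution into a factor of $\bf w$. In parallel, one verifies by exhaustive enumeration that, for every $(5/4)^+$-free word $u$ over $5$ letters of length at most some bound $K=K(L)$, the image $h(u)$ is $(4/3)^+$-free; this handles the short-period case.

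The concluding step is the lift. Suppose $h({\bf w})$ contains a $(4/3)^+$-power $v^e$ with $|v|>L$. Synchronisation forces $v$ to be aligned with the morphic blocks, so $|v|$ is a multiple of $84$ and $v=h(v')$ for some factor $v'$ of $\bf w$; uniformity then gives a power in $\bf w$ of exponent at least $4/3>5/4$, contradicting the $(5/4)^+$-freeness of $\bf w$. The main obstacle is the tight numerical margin: the ratio $\tfrac{4/3}{5/4}=\tfrac{16}{15}$ leaves very little room, so the synchronisation threshold $L$ and the verification bound $K$ must be chosen carefully to keep the boundary loss in the lifting step below $1/15$ of a block. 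As in Theorems~\ref{dejean3} and~\ref{dejean4}, once the constants $L$ and $K$ are fixed, the entire proof reduces to a finite computation that is well within reach of a computer check.
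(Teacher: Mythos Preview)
Your approach is essentially the same as the paper's: the paper invokes \cite[Lemma~2.1]{Ochem:2006} (and its extension in \cite[Lemma~23]{Mol&Rampersad&Shallit:2020}), which packages precisely the synchronisation-plus-desubstitution argument you sketch, together with an explicit bound $t$ on the length of the $(5/4)^+$-free source words that must be checked. The paper also records the key fact you rely on, namely that the common prefix $p$ occurs in $h({\bf w})$ only at block-aligned positions, which is what makes the morphism synchronising.

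One small wording issue in your lifting step: from a $(4/3)^+$-power $v^e$ with $|v|>L$ you conclude ``$v=h(v')$'' and that the preimage has exponent ``at least $4/3$''. Strictly, synchronisation only gives $84\mid|v|$; the occurrence of $v^e$ need not begin at a block boundary, so after trimming to an aligned factor the exponent in $\bf w$ is $e-O(1/|v'|)$ rather than $e$. You clearly see this, since you immediately discuss the $16/15$ margin and boundary loss, but the sentence as written overstates what synchronisation delivers. Ochem's lemma absorbs exactly this bookkeeping and outputs the concrete value of your $K$, which is why the paper simply cites it rather than redoing the estimate.
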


Theorems~\ref{dejean3},~\ref{dejean4}, and~\ref{dejean5} make use of~\cite[Lemma~2.1]{Ochem:2006},
which has been recently extended to larger exponents in~\cite[Lemma~23]{Mol&Rampersad&Shallit:2020}.
In each case, the common prefix $p$ appears only as the prefix of the image of a letter.
This ensures that the morphism is synchronizing.
Then we check that the image of every considered Dejean word $u$ of length $t$ is $RT'(k)^+$-free,
where $t$ is specified by~\cite[Lemma~2.1]{Ochem:2006}.

In addition, using depth-first search of the
appropriate space, we have constructed:
\begin{itemize}
    \item A $(5/4)^+$-free word over 6 letters with pseudoperiod (9,24), of length 500000.
    \item A $(6/5)^+$-free word over 7 letters with pseudoperiod (22,33), of length 500000.
\end{itemize}

These examples suggest the following conjecture.
\begin{conjecture}
For every $k\ge4$ we have $RT'(k)=\tfrac{k-1}{k-2}$.
\label{conj2}
\end{conjecture}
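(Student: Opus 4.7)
The plan is to prove $RT'(k) = \tfrac{k-1}{k-2}$ by establishing the upper and lower bounds separately. The upper bound extends the morphic constructions of Theorems~\ref{dejean3}, \ref{dejean4}, and~\ref{dejean5}, while the lower bound requires a new combinatorial argument ruling out 2-pseudoperiodic words of critical exponent strictly below $\tfrac{k-1}{k-2}$.

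For the upper bound, I would, for each $k \geq 4$, seek a synchronizing uniform morphism $h_k$ on the alphabet $\{0,1,\ldots,k-1\}$ together with a common prefix $p_k$ of $h_k(0),\ldots,h_k(k-1)$, such that the $h_k$-image of any $\tfrac{k-1}{k-2}^+$-free word over $k$ letters is again $\tfrac{k-1}{k-2}^+$-free and has some pseudoperiod $(a_k,b_k)$ with $a_k<b_k$. As in the proofs of Theorems~\ref{dejean3}--\ref{dejean5}, correctness would then be verified by combining~\cite[Lemma~2.1]{Ochem:2006} (or the extension in~\cite[Lemma~23]{Mol&Rampersad&Shallit:2020}) with a finite check on the images of short Dejean factors. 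The four instances $k\in\{4,5,6,7\}$ given in the paper, all obtained via heuristic search, strongly suggest the existence of such morphisms for every $k$, but producing a single closed-form family indexed by $k$ appears to require further combinatorial insight; at worst one could proceed one $k$ at a time by search plus \texttt{Walnut} verification.

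For the lower bound, I would try to show that every infinite 2-pseudoperiodic word $\bf w$ over $k\geq 4$ letters with pseudoperiod $(a,b)$, $a<b$, contains a factor of exponent at least $\tfrac{k-1}{k-2}$. The natural starting point is the fact, recalled in the paper, that every $\tfrac{k-1}{k-2}$-free word over $k$ letters is 3-pseudoperiodic with pseudoperiod $(k-1,k,k+1)$; intuitively, restricting to only two pseudoperiods should force additional structure incompatible with $\tfrac{k-1}{k-2}$-freeness. I would split the analysis by the relationship of $a$ and $b$: the case $b=2a$ already yields infinite critical exponent by a direct extension of Proposition~\ref{doublepp}; when $b\neq 2a$ the constraint ${\bf w}[i]\in\{{\bf w}[i+a],{\bf w}[i+b]\}$, combined with the Dejean-like local structure forced by low exponents, should restrict the window of length $b+1$ so severely that a repetition of the desired exponent can be extracted. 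For small $k$, a computer-assisted enumeration of finite 2-pseudoperiodic factors of $\tfrac{k-1}{k-2}$-free words might establish the bound directly.

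The main obstacle, in my view, is the lower bound. In the binary case, $RT'(2)=7/3$ is proved via the strong rigidity of $(7/3)^+$-free binary words, which must contain arbitrarily long prefixes of $\bf t$ and hence cannot satisfy any specified 2-pseudoperiod by Theorem~\ref{tm-linear-bound}; this reduction is very particular to $k=2$. For $k\geq 4$ the class of $\tfrac{k-1}{k-2}$-free words over $k$ letters is considerably richer, so no single ``canonical'' word plays the analogous role. A promising avenue is to exhibit, for each $k$, a finite collection of morphisms whose iterated images cover all such words, and then to rule out 2-pseudoperiodicity of every image by a synchronized-function argument analogous to Corollary~\ref{linear-bound}; but producing such a generating structure uniformly in $k$ is a substantial combinatorial problem that may require tools beyond those currently implemented in \texttt{Walnut}.
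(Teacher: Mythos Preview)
The statement you are attempting to prove is explicitly labeled a \emph{Conjecture} in the paper; the authors provide no proof, only supporting evidence (the upper bounds $RT'(4)\le 3/2$ and $RT'(5)\le 4/3$ from Theorems~\ref{dejean4} and~\ref{dejean5}, plus long finite witnesses for $k=6,7$ found by depth-first search). There is therefore no paper proof to compare your proposal against, and your document is best read as a research outline rather than a proof.

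Viewed that way, your outline correctly identifies the two halves and where the real difficulty lies. A few points are worth sharpening. First, for $k\in\{6,7\}$ the paper does \emph{not} give morphisms; it only exhibits words of length $500000$, so even the upper bound $RT'(k)\le\tfrac{k-1}{k-2}$ is open for $k\ge 6$. Your ``one $k$ at a time via search plus verification'' plan is exactly the state of the art, and extending it to a uniform argument in $k$ is not merely a matter of insight---no such family is known. Second, your reference for the $b=2a$ case is off: Proposition~\ref{doublepp} concerns how $\mu$ doubles pseudoperiods; the relevant fact is the (unlabeled) proposition immediately preceding it, which already covers arbitrary alphabets. Third, and most important, your lower-bound sketch does not actually contain a mechanism. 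The observation that $\tfrac{k-1}{k-2}$-free words over $k$ letters have pseudoperiod $(k-1,k,k+1)$ does not by itself prevent them from \emph{also} having some pseudoperiod of size~$2$; you would need to show that no pair $(a,b)$ can work, and the analogy with Theorem~\ref{tm-linear-bound} breaks down precisely because, as you note, there is no canonical Dejean word playing the role of~$\bf t$.

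In short: there is no gap relative to the paper because the paper proves nothing here, but your proposal is a plan with openly unresolved steps rather than a proof, and both the upper bound for $k\ge 6$ and the entire lower bound remain genuinely open.
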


\section{Computational complexity}
\label{sec7}

For a finite word, checking a given specific pseudoperiod is obviously easy.
However, checking the existence of an arbitrary pseudoperiod is computationally hard, as we show now.

Consider the following decision problem:\\

\noindent\pseudoperiod:\\
\noindent\emph{Instance:}  a string $x$ of length $n$,  and positive integers $k$ and $B$.

\noindent \emph{Question:}  Does there exist
a set $S = \{p_1, p_2, \ldots, p_k\}$ of cardinality $k$ with
$1 \leq p_1 < \cdots < p_k \leq B$ such that
$x[i] \in \{ x[i+p_1], x[i+p_2], \ldots, x[i+p_k] \}$ for
$1 \leq i \leq n-p_k$?

\begin{theorem}
\pseudoperiod\ is \mbox{\bf NP}-complete.
\end{theorem}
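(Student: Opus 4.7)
The plan is to prove the two halves of \mbox{\bf NP}-completeness separately: membership in \mbox{\bf NP}, and \mbox{\bf NP}-hardness via a polynomial-time reduction from \hittingset.

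For membership, I would simply have a nondeterministic machine guess the set $S = \{p_1, \ldots, p_k\} \subseteq \{1, \ldots, B\}$ (using $O(k \log B)$ bits) and then verify the defining condition of pseudoperiodicity in $O(nk)$ time. For hardness, given a \hittingset\ instance with universe $U = \{1, \ldots, B\}$, collection $\mathcal{C} = \{A_1, \ldots, A_m\}$, and target $k$, I would construct a string $x$ so that the ``matching-offset sets'' $M_i := \{p \in [1, B] : x[i] = x[i+p]\}$ realized by positions of $x$ encode exactly the sets $A_j$ (up to some harmless auxiliary constraints), and then argue that a $k$-subset $S$ of $\{1, \ldots, B\}$ solves the PSEUDOPERIOD instance if and only if $S$ hits every $A_j \in \mathcal{C}$.

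A first-attempt gadget goes as follows: over the alphabet $\{0, \sigma_1, \ldots, \sigma_m\}$, for each $A_j$ I would build a block $G_j$ of length $B+1$ whose first position (the ``probe'' $i_j$) carries the symbol $\sigma_j$ and whose remaining positions carry $\sigma_j$ at offsets $p \in A_j$ and $0$ elsewhere; the full string $x$ would be the concatenation of the $G_j$'s separated by long $0$-buffers of length at least $B$. With this design, $M_{i_j} = A_j$ at every probe, and every position deep inside a $0$-buffer has $M_i = \{1, \ldots, B\}$, which is trivially hit by any nonempty $S$.

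The main obstacle---where I expect most of the technical work to go---is controlling the $M_i$'s at the remaining positions: namely the $\sigma_j$-positions inside $G_j$ at offset $p \in A_j$ (whose $M_i$ is a shifted subset of $A_j$, and is in fact empty when $p = \max A_j$), the $0$-positions inside a gadget, and the $0$-buffer positions within distance $B$ of a gadget boundary. To tame these I plan either to pad each gadget with a few forcing positions (at which $M_i$ is a singleton, pinning a constant number of extra offsets into any valid $S$ and absorbing them into the budget), or to reduce from a more structured \mbox{\bf NP}-hard subproblem of hitting set (such as $3$-\hittingset\ or \textsc{Vertex Cover}), possibly blowing up the universe by a multiplier $N$ so that the encoded offsets are widely separated and the shifted-subset constraints land in regions already covered by any hitting set of $\mathcal{C}$. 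Once both directions of the resulting equivalence are verified, the construction is polynomial-time, which establishes \mbox{\bf NP}-hardness; together with the \mbox{\bf NP} membership this yields \mbox{\bf NP}-completeness.
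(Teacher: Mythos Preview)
Your high-level plan matches the paper's exactly: membership in \textbf{NP} is immediate, and hardness is by a polynomial-time reduction from \hittingset. The \textbf{NP} membership argument you give is fine. Where your proposal remains incomplete is precisely the point you flag as the main obstacle, and the paper's contribution is a clean concrete resolution of it.

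The paper does not introduce a symbol per set; it works over the \emph{binary} alphabet. Given sets $S_1,\ldots,S_m$ over $U=\{1,\ldots,n\}$ and target $k'$, it sets $k=k'+4$, $B=4n+5$, and builds $x=uvw\,z_1\cdots z_m$. Three short forcing blocks $u=11\,0^{4n+3}$, $v=101\,0^{4n+3}$, $w=1001\,0^{4n+3}$ pin $1,2,3$ and $4n+4$ into any valid pseudoperiod (your ``forcing positions'' idea, made explicit). The universe is dilated by the factor $4$ (your ``blow up by a multiplier $N$'' idea): element $\ell$ is encoded at offset $4\ell$, so each gadget is $z_i = 1\,000\,a_{1,i}\,000\,a_{2,i}\cdots 000\,a_{n,i}\,0000\,(0011)^n\,0$, where $a_{\ell,i}=1$ iff $\ell\in S_i$. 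Now every $0$ in $x$ is handled by $\{1,2,3\}$ because runs of $1$'s have length at most $2$; the lead $1$ of each $z_i$ is satisfiable iff some $p_j/4\in S_i$; and---this is the trick that dissolves your ``$\max A_j$'' worry---every non-lead $1$ inside $z_i$ (each $a_{\ell,i}=1$) has a $1$ exactly $4n+4$ positions later inside the appended $(0011)^n$ tail, so the forced offset $4n+4$ covers it. The $1$'s in the $(0011)^n$ tail are in turn covered by $\{1,2,3\}$.

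So your two proposed fixes (forcing a few offsets; dilating the universe) are exactly the right moves, but until you supply a tail gadget (or some analogue) that absorbs the non-probe occurrences of the set-encoding symbol, your construction does not yet go through: in your first-attempt gadget the position carrying $\sigma_j$ at offset $\max A_j$ has $M_i=\emptyset$, which no choice of $S$ can satisfy. The paper's $(0011)^n$ tail, together with forcing $4n+4$, is one tidy way to close that gap; note also that staying binary avoids the alphabet growing with $m$.
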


\begin{proof}
It is easy to see that \pseudoperiod\ is in \textbf{NP},
as we can check an instance in polynomial time.

To see that \pseudoperiod\ is \textbf{NP}-hard,
we reduce from a classical
\textbf{NP}-complete problem, namely, \hittingset\  
\cite{Karp:1972}.
It is defined as follows:\\

\noindent\hittingset\\
\noindent\emph{Instance:} 
A list of sets $S_1, S_2, \ldots, S_m$ over a universe $U = \{1, 2, \ldots, n\}$ and an integer $k'$.\\
\noindent\emph{Question:}
Does there exist a set $H = \{h_1, h_2, \ldots, h_{k'}\}$ of cardinality $k'$ such that $S_i \cap H \neq \emptyset$ for all $i$?\\

Given an instance of \hittingset\ $S_1, S_2, \ldots, S_m$ and $U = \{1, 2, \ldots, n\}$, and integer $k'$, define $a_{\ell, i} = 1$ if $\ell \in S_i$ and $a_{\ell, i} = 0$ otherwise.
We construct a \pseudoperiod\ instance with $k = k'+4$, $B = 4n+5$, and a string $x$, as follows:
\begin{align*}
x = uvwz_1z_2\cdots z_m
\end{align*}
and
\begin{align*}
u &= 11 \,0^{4n+3}\\
v &= 101 \, 0^{4n+3}\\
w &= 1001 \, 0^{4n+3}\\
z_i &= 1000 \, a_{1, i} 000 \, a_{2, i}\cdots 000 \, a_{n, i}0000 \, (0011)^n \, 0.
\end{align*}

We first show that if the \pseudoperiod\ instance has a solution, then we can extract a solution for the \hittingset\ instance.
To do so, we examine what a valid pseudoperiod would look like for $x$ by first considering each $1$ symbol.

The first $1$ symbol, $u[1] = x[1] = 1$, is followed by $10^{4n+3}$ and since the $p_j$ forming the pseudoperiod are bounded by $B = 4n+5$, we require $p_1 = 1$ for the pseudoperiod property to be satisfied at $x[1]$.
Similarly, the next $1$ symbol $u[2] = x[2] = 1$ is followed by $0^{4n+3}1$, which requires that some $p_j$ equal $4n+4$, in order to satisfy the pseudoperiod property.
The $v$ factor is analogous in that $v[1] = 1$ is followed by $010^{4n+3}$, which gives us that $p_2 = 2$ and $v[3] = 1$ has a $1$ symbol $4n+4$ symbols afterward, so it also satisfies the pseudoperiod property.
The $w$ factor is such that $w[1] = 1$ is followed by $0010^{4n+3}$, which then forces $p_3 = 3$ with $w[4]$ satisfied by having a $1$ symbol $4n+4$ symbols afterward as previous.

We now consider the $z_i$ factors.
For each $z_i$, the $1$ symbol at $z_i[1]$ satisfies the pseudoperiod property if and only if the pseudoperiod contains some $p_j$ such that $\frac{p_j}{4} \in S_i$.
Since the only possible indices that can be $1$ within the $B$ bound are the $a_{\ell, i}$, the pseudoperiod property is satisfied at $z_i[1]$ using some $p_j$ of the pseudoperiod if and only if $z_i[1+p_j] = a_{\frac{p_j}{4}, i} = 1$ which means $\frac{p_j}{4} \in S_i$.

Considering the remaining $1$ symbols in $z_i$, we see that each $a_{j, i}$ is followed by a $0$ symbol and has a $1$ symbol exactly $4n+4$ indices later in the $(0011)^n$ factor. Regardless of the assignment of $a_{j, i}$ the pseudoperiod property is satisfied.
Each $1$ symbol in the $(0011)^n$ factor has another $1$ symbol either $p_1 = 1$, $p_2 = 2$, or $p_3 = 3$ indices later, as each $0011$ is followed by one of: another $0011$, $01$ where the $1$ symbol is $z_{i+1}[1]$, or the end of the string if $i = m$ in which case it satisfies the pseudoperiod property by default.

Finally, we observe that there are no more than two consecutive $1$ symbols in $x$, so the pseudoperiod property is satisfied at every $0$ symbol, as there is another $0$ symbol either $p_1 = 1$, $p_2 = 2$, or $p_3 = 3$ indices later.

Taken together, a satisfying pseudoperiod for this instance is of the form $\{1, 2, 3, 4n+4\} \cup P$, where $P$ is a set of cardinality $k'$ that has the property for all $S_i$, there exists $p_j \in P$ such that $\frac{p_j}{4} \in S_i$.
Therefore, if such a pseudoperiod exists, then we can derive a solution $H = \{\frac{p}{4} \mid p \in P\}$ for the \hittingset\ instance from the solution to the generated \pseudoperiod\ instance.

Conversely, if the \hittingset\ instance has a solution $H$ then 
$$P = \{1, 2, 3, 4n+4\} \cup \{4 \cdot h \mid h \in H\}$$
is a valid pseudoperiod for $x$.
All of the $0$ symbols and most of the $1$ symbols are satisfied by the $p_1 = 1, p_2 = 2, p_3 = 3$, or $p_{k} = 4n+4$ as previously explained.
We only need to check that the $z_i[1] = 1$ also satisfy the desired property.
There exists some $h_i \in S_i \cap H$, since $H$ is a hitting set, which means that $a_{h_i, i} = 1$.
This gives us that $z_i[1 + h_i \cdot 4] = a_{h_i, i} = 1$ and $4 \cdot h_i \in P$, which means each $z_i[1]$ also satisfies the pseudoperiod property and $P$ is a pseudoperiod for this instance.

Therefore, \pseudoperiod\ is \textbf{NP}-Hard.   This completes the proof.
\end{proof}

\section{About Vladimir Shevelev}
\label{sec8}

Here we present some details about Vladimir Shevelev's life
and contributions, based on \cite{Shevelev:2022}.

Vladimir Samuil Shevelev was born 
on March 9 1945 in Novocherkassk, Russia,
under the name Vladimir Abramovich.
He received his Ph.D. in mathematics in 1971 from the Rostov-on-Don State University in the USSR.
In 1992 he received a D.Sc. in combinatorics from
the Glushkov Cybernetic Institute, Academy of Ukraine, Kiev.

From 1971 to 1974 he was Assistant Professor at the Department of Mathematics, Rostov State University.  From 1974 to 1999 he taught at the Department of Mathematics, Rostov State Building University.
In 1982 he took the surname ``Shevelev'' and in 1999 he emigrated to Israel, where he taught at the Ben-Gurion University of the Negev and did research at the Tel Aviv University.

From 1969 to 2016, Vladimir Shevelev published approximately 60 mathematical papers in refereed journals.  He also published approximately 40 preprints on the arXiv.  He was an excellent chess player, played the violin, and was a member of a Russian vocal group.  He was married and had three children and six grandchildren.     He died on May 3 2018 in Beersheba, Israel.

May his memory be a blessing.
\begin{center}
    \includegraphics[width=2.3in]{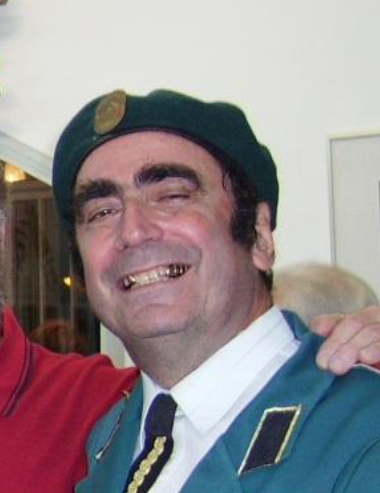}
\end{center}
Photograph taken from \url{https://www.math.bgu.ac.il/~shevelev/Hobbies.pdf}.

\acknowledgements
This work benefited from the use of the CrySP RIPPLE Facility at the University of Waterloo. Thanks to Ian Goldberg for allowing us to run computations on this machine.

We thank the referees for a careful reading of the paper and for many useful suggestions and corrections.

We are grateful to Daniel Berend and Simon Litsyn for their assistance in obtaining information about the life of Vladimir Shevelev.

\appendix
\begin{appendices}
\section{The automaton \texttt{triple}}

In this section we provide the \texttt{Walnut} code for the automaton
\texttt{triple}.

\bigskip

\begin{minipage}{1in}
{\tiny
\begin{verbatim}
msd_2 msd_2 msd_2

0 0
0 0 0 -> 0
0 0 1 -> 1
0 1 1 -> 2
1 1 1 -> 3

1 0
0 1 0 -> 4
1 1 0 -> 5
0 1 1 -> 6
1 1 1 -> 7

2 0
0 0 0 -> 8
1 0 0 -> 9
0 0 1 -> 10
1 0 1 -> 11
1 1 1 -> 12

3 0
0 0 0 -> 3
0 0 1 -> 13
0 1 1 -> 14
1 1 1 -> 3

4 0
0 0 0 -> 15
1 0 0 -> 16
0 1 0 -> 17
1 1 0 -> 18
1 1 1 -> 19

5 0
0 0 0 -> 20
0 1 0 -> 21
1 1 0 -> 13
0 1 1 -> 22

6 0
1 1 0 -> 23
\end{verbatim}
}
\end{minipage}
\begin{minipage}{1in}
{\tiny
\begin{verbatim}
7 0
0 1 1 -> 24
1 1 1 -> 25

8 0
0 0 0 -> 26
1 0 0 -> 27

9 0
0 0 0 -> 28
1 0 0 -> 14
0 0 1 -> 29
1 0 1 -> 19

10 0
1 1 0 -> 22

11 1
0 0 0 -> 30
1 1 0 -> 24
1 0 1 -> 31
1 1 1 -> 19

12 0
0 0 1 -> 24
1 0 1 -> 32
1 1 1 -> 33

13 0
0 1 0 -> 19
1 1 0 -> 13

14 0
1 0 0 -> 14
1 0 1 -> 19

15 0
0 0 0 -> 15
1 1 1 -> 19

16 0
1 0 0 -> 34
\end{verbatim}
}
\end{minipage}
\begin{minipage}{1in}
{\tiny
\begin{verbatim}
17 0
0 1 0 -> 35
0 1 1 -> 6

18 0
1 1 0 -> 18
1 1 1 -> 24

19 1
0 0 0 -> 19
1 1 1 -> 19

20 0
0 0 0 -> 36
0 1 0 -> 37

21 1
0 0 0 -> 38
0 1 0 -> 39
1 0 1 -> 24
1 1 1 -> 19

22 0
1 0 0 -> 24

23 0
1 1 0 -> 40

24 1
0 0 0 -> 24

25 0
0 1 0 -> 24
0 1 1 -> 24
1 1 1 -> 7

26 0
0 0 0 -> 41
0 0 1 -> 10
1 0 1 -> 42
\end{verbatim}
}
\end{minipage}
\begin{minipage}{1in}
{\tiny
\begin{verbatim}
27 0
1 0 1 -> 43

28 0
0 0 0 -> 44
0 0 1 -> 29
1 0 1 -> 45

29 0
1 1 0 -> 24

30 1
0 0 0 -> 46
1 1 1 -> 19

31 0
1 0 1 -> 47

32 1
0 0 0 -> 24
1 0 1 -> 48

33 0
0 0 1 -> 24
1 0 1 -> 24
1 1 1 -> 33

34 0
1 0 0 -> 16
1 1 1 -> 24

35 0
0 1 0 -> 17
1 1 0 -> 49

36 0
0 0 0 -> 20
0 1 0 -> 37
0 1 1 -> 22
\end{verbatim}
}
\end{minipage}
\begin{minipage}{1in}
{\tiny
\begin{verbatim}
37 0
1 0 1 -> 24

38 1
0 0 0 -> 50
1 1 1 -> 19

39 0
0 1 0 -> 51

40 1
0 0 0 -> 24
1 1 0 -> 23

41 0
0 0 0 -> 26

42 1
0 0 0 -> 52
1 1 0 -> 24

43 0
0 1 1 -> 24

44 0
0 0 0 -> 28
0 0 1 -> 29

45 0
0 1 0 -> 24

46 1
0 0 0 -> 30
1 1 0 -> 24
1 1 1 -> 19

47 0
1 0 1 -> 31
1 1 1 -> 24
\end{verbatim}
}
\end{minipage}
\begin{minipage}{1in}
{\tiny
\begin{verbatim}
48 0
1 0 1 -> 32

49 0
1 1 1 -> 24

50 1
0 0 0 -> 38
1 0 1 -> 24
1 1 1 -> 19

51 0
0 1 0 -> 39
1 1 1 -> 24

52 1
0 0 0 -> 42
\end{verbatim}
}
\end{minipage}

\end{appendices}


\begin{thebibliography}{20}
\providecommand{\natexlab}[1]{#1}
\providecommand{\url}[1]{\texttt{#1}}
\expandafter\ifx\csname urlstyle\endcsname\relax
  \providecommand{\doi}[1]{doi: #1}\else
  \providecommand{\doi}{doi: \begingroup \urlstyle{rm}\Url}\fi

\bibitem[Allouche(2015)]{Allouche:2015}
J.-P. Allouche.
\newblock Thue, combinatorics on words, and conjectures inspired by the
  {Thue-Morse} sequence.
\newblock \emph{J. {Th\'eorie} Nombres Bordeaux}, 27:\penalty0 375--388, 2015.

\bibitem[Allouche and Shallit(2003)]{Allouche&Shallit:2003}
J.-P. Allouche and J.~Shallit.
\newblock \emph{Automatic Sequences: Theory, Applications, Generalizations}.
\newblock Cambridge University Press, 2003.

\bibitem[Blondin~Mass\'e et~al.(2012)Blondin~Mass\'e, Gaboury, and
  {Hall\'e}]{BlondinMasse&Gaboury&Halle:2012}
A.~Blondin~Mass\'e, S.~Gaboury, and S.~{Hall\'e}.
\newblock Pseudoperiodic words.
\newblock In H.-C. Yen and O.~H. Ibarra, editors, \emph{DLT 2012}, volume 7410
  of \emph{Lecture Notes in Computer Science}, pages 308--319. Springer-Verlag,
  2012.

\bibitem[{Bruy\`ere} et~al.(1994){Bruy\`ere}, Hansel, Michaux, and
  Villemaire]{Bruyere&Hansel&Michaux&Villemaire:1994}
V.~{Bruy\`ere}, G.~Hansel, C.~Michaux, and R.~Villemaire.
\newblock Logic and $p$-recognizable sets of integers.
\newblock \emph{Bull. Belgian Math. Soc.}, 1:\penalty0 191--238, 1994.
\newblock Corrigendum, {\it Bull.\ Belgian\ Math.\ Soc.} {\bf 1} (1994), 577.

\bibitem[Currie and Rampersad(2011)]{Currie&Rampersad:2011}
J.~Currie and N.~Rampersad.
\newblock A proof of {Dejean's} conjecture.
\newblock \emph{Math. Comp.}, 80:\penalty0 1063--1070, 2011.

\bibitem[Davis and Knuth(1970)]{Davis&Knuth:1970}
C.~Davis and D.~E. Knuth.
\newblock Number representations and dragon curves--{I, II}.
\newblock \emph{J. Recreational Math.}, 3:\penalty0 66--81, 133--149, 1970.

\bibitem[Dekking et~al.(1982)Dekking, {Mend\`es}~France, and
  Poorten]{Dekking&MendesFrance&vanderPoorten:1982}
F.~M. Dekking, M.~{Mend\`es}~France, and A.~J. v.~d. Poorten.
\newblock Folds!
\newblock \emph{Math. Intelligencer}, 4:\penalty0 130--138, 173--181, 190--195,
  1982.
\newblock Erratum, {\bf 5} (1983), 5.

\bibitem[Go\v{c} et~al.(2015)Go\v{c}, Mousavi, Schaeffer, and
  Shallit]{Goc&Mousavi&Schaeffer&Shallit:2015}
D.~Go\v{c}, H.~Mousavi, L.~Schaeffer, and J.~Shallit.
\newblock A new approach to the paperfolding sequences.
\newblock In A.~B. et~al., editor, \emph{Computability in Europe, Cie 2015},
  volume 9136 of \emph{Lecture Notes in Computer Science}, pages 34--43.
  Springer-Verlag, 2015.

\bibitem[Karhum{\"{a}}ki and Shallit(2004)]{Karhumaki&Shallit:2004}
J.~Karhum{\"{a}}ki and J.~Shallit.
\newblock Polynomial versus exponential growth in repetition-free binary words.
\newblock \emph{J. Combin. Theory. Ser. A}, 105\penalty0 (2):\penalty0
  335--347, 2004.

\bibitem[Karp(1972)]{Karp:1972}
R.~M. Karp.
\newblock Reducibility among combinatorial problems.
\newblock In R.~E. Miller and J.~W. Thatcher, editors, \emph{Complexity of
  Computer Computations}, pages 85--104. Plenum Press, 1972.

\bibitem[Lothaire(2002)]{Lothaire:2002}
M.~Lothaire.
\newblock \emph{Algebraic Combinatorics on Words}, volume~90 of
  \emph{Encyclopedia of Mathematics and its Applications}.
\newblock Cambridge University Press, 2002.

\bibitem[Marcus(2004)]{Marcus:2004}
S.~Marcus.
\newblock Quasiperiodic infinite words.
\newblock \emph{Bull. European Assoc. Theor. Comput. Sci.}, 82:\penalty0
  170--174, 2004.

\bibitem[Mol et~al.(2020)Mol, Rampersad, and
  Shallit]{Mol&Rampersad&Shallit:2020}
L.~Mol, N.~Rampersad, and J.~Shallit.
\newblock Extremal overlap-free and extremal $\beta$-free binary words.
\newblock \emph{Electronic J. Combinatorics}, 27\penalty0 (4):\penalty0
  \#P4.42, 2020.

\bibitem[Mousavi(2016)]{Mousavi:2016}
H.~Mousavi.
\newblock Automatic theorem proving in {{\tt Walnut}}.
\newblock Arxiv preprint arXiv:1603.06017 [cs.FL], available at
  \url{http://arxiv.org/abs/1603.06017}, 2016.

\bibitem[Ochem(2006)]{Ochem:2006}
P.~Ochem.
\newblock A generator of morphisms for infinite words.
\newblock \emph{RAIRO Inform. Th\'eor. App.}, 40:\penalty0 427--441, 2006.

\bibitem[Rao(2011)]{Rao:2011}
M.~Rao.
\newblock Last cases of {Dejean's} conjecture.
\newblock \emph{Theoret. Comput. Sci.}, 412:\penalty0 3010--3018, 2011.

\bibitem[Shallit(2021)]{Shallit:2021h}
J.~Shallit.
\newblock Synchronized sequences.
\newblock In T.~Lecroq and S.~Puzynina, editors, \emph{WORDS 2021}, volume
  12847 of \emph{Lecture Notes in Computer Science}, pages 1--19.
  Springer-Verlag, 2021.

\bibitem[Shallit(2022)]{Shallit:2022}
J.~Shallit.
\newblock \emph{The Logical Approach To Automatic Sequences: Exploring
  Combinatorics on Words with {\tt Walnut}}, volume 482 of \emph{London Math.
  Soc. Lecture Note Series}.
\newblock Cambridge University Press, 2022.

\bibitem[Shevelev(2012)]{Shevelev:2012}
V.~Shevelev.
\newblock Equations of the form $t(x+a)=t(x)$ and $t(x+a) = 1-t(x)$ for
  {Thue-Morse} sequence.
\newblock Arxiv preprint arXiv:0907.0880 [math.NT], available at
  \url{https://arxiv.org/abs/0907.0880}, 2012.

\bibitem[Shevelev(2022)]{Shevelev:2022}
V.~Shevelev.
\newblock Website of {Vladimir Shevelev}, 2022.
\newblock Available at \url{https://www.math.bgu.ac.il/~shevelev/}.

\end{thebibliography}
\end{document}